\documentclass{amsart}

\usepackage[utf8]{inputenc}
\usepackage[english]{babel}
\usepackage[margin=1.2in]{geometry}
\usepackage{amsmath, amsthm, amscd, amssymb, mathtools}
\usepackage{verbatim} 
\usepackage{wrapfig}
\usepackage{bbm}
\usepackage[nocompress]{cite}

\usepackage{hyperref}
\hypersetup{
  colorlinks   = true, 
  urlcolor     = blue, 
  linkcolor    = blue, 
  citecolor   = red 
}

\usepackage{tikz-cd}
\tikzset{
  symbol/.style={
    draw=none,
    every to/.append style={
      edge node={node [sloped, allow upside down, auto=false]{$#1$}}}
  }
}

\usepackage{enumitem}
\setenumerate{itemsep=0pt,topsep=3pt}
\setenumerate[1]{label=\textup{(\roman*)}}

\theoremstyle{plain}
\newtheorem{theorem}{Theorem}[section]
\newtheorem{lemma}[theorem]{Lemma}
\newtheorem{proposition}[theorem]{Proposition}
\newtheorem{corollary}[theorem]{Corollary}

\theoremstyle{definition}
\newtheorem{definition}[theorem]{Definition}

\newtheorem{example}[theorem]{Example}

\DeclareMathOperator{\Hom}{Hom}

\newcommand{\id}{\mathrm{id}}

\DeclareMathOperator{\rk}{rk}

\DeclareMathOperator{\maxspec}{maxspec}

\newcommand{\tensor}{\otimes}
\newcommand{\lla}{\langle\!\langle}
\newcommand{\rra}{\rangle\!\rangle}

\newcommand{\Z}{\ensuremath{\mathbb{Z}}}
\newcommand{\Q}{\ensuremath{\mathbb{Q}}}

\newcommand{\m}{\ensuremath{\mathfrak{m}}}
\newcommand{\p}{\ensuremath{\mathfrak{p}}}

\newcommand{\fm}{\ensuremath{\mathfrak{m}}}
\newcommand{\fn}{\ensuremath{\mathfrak{n}}}
\newcommand{\fp}{\ensuremath{\mathfrak{p}}}

\newcommand{\ii}{\textup{i}}

\title
[On canonical roots of fractional ideals]
{On canonical roots of fractional ideals}
\author[D.\ M.\ H. van Gent]{D.\ M.\ H. van Gent}
\address{Centrum Wiskunde \& Informatica, The Netherlands}
\email{dmhvg@cwi.nl}
\keywords{commutative algebra}

\begin{document}

\begin{abstract}
We give an algorithm to compute in polynomial time the roots of a fractional ideal of an order \(R\). 
We take care not to assume \(R\) is Dedekind, since the maximal order of a number field is generally inaccessible in polynomial time.
Consequently, the output of such an algorithm is no longer uniquely defined. For it to be a satisfying algorithm we additionally require it be functorial, i.e., isomorphisms on the inputs should induce isomorphisms on the outputs. To adhere to these two constraints, we generalize results from Dade--Taussky--Zassenhaus, and Ge and Buchmann--Eisenbrand.
\end{abstract}

\maketitle

\section{Introduction}

Let \(K\) be a number field. By factoring the polynomial \(X^n-a\) over \(K\), we may efficiently test if \(a\in K\) has an \(n\)-th root in \(K\) and if so compute it \cite{factoring_polynomials}.
It is a rule of thumb that any sufficiently multiplicative statement on the elements of \(K^*\) might be better stated for the invertible ideals of the maximal order of $K$ instead. 
Accordingly, Belabas--Simon \cite{Belabas} gave a polynomial-time algorithm that, given the maximal order of a number field and an invertible ideal \(\mathfrak{a}\) of this order, computes the roots of~\(\mathfrak{a}\).
The maximal order however is generally inaccessible in polynomial time; the difficulty of computing it is similar to that of prime factorization (cf.\ Theorem~1.3 in \cite{Buchman-Lenstra}).
This algorithmic inaccessibility can be mitigated by a design paradigm used in for example \cite{Buchman-Lenstra,Ge,Buchmann-Eisenbrand}.
Instead of formulating an algorithm for the maximal order, one allows an arbitrary order~\(R\).
Then, if a step of the algorithm fails because \(R\) does not resemble the maximal order well enough, the algorithm should instead return a strictly larger order of the same number field, with which we can replace \(R\).
Essentially, \(R\) is an approximation of the maximal order.
In this paper we present such a polynomial-time root-taking algorithm for fractional ideals of orders.
Unlike the algorithm of Belabas--Simon, ours does not require additional ramification data.

We encounter two ways an order \(R\) can fail to resemble the maximal order \(\mathcal{O}\).
Firstly, it can be that an ideal \(\mathfrak{a}\) of \(R\) has no (non-trivial) roots, but that it has roots in \(\mathcal{O}\) (Example~\ref{ex:need_extension}).
Secondly, even if a root exists in \(R\), it need not be unique or even characteristic, i.e., fixed by the automorphisms that fix \(\mathfrak{a}\) (Example~\ref{ex:non-functor}).
Uniqueness is an essential obstruction; if we could efficiently compute an order \(R\) where for each \(n\) each fractional ideal has a unique \(n\)-th root, then we could efficiently compute \(\mathcal{O}\) (Proposition~\ref{prop:torsion_free_compute_OK}).
Instead, we impose that our algorithms compute a functor (Section~\ref{sec:algorithmic_clich_e}).

We will prove our results for a broader class of rings than in the PhD thesis of the author \cite{thesis}, on which the present paper is based: for us, an \emph{order} will be a commutative ring $R$ which is free of finite rank as \(\Z\)-module, which consequently may contain zero-divisors, and a \emph{fractional ideal} of \(R\) is a finitely generated \(R\)-submodule \(\mathfrak{a}\subseteq \Q R\) such that \(\Q \mathfrak{a}=\Q R\).
To this end, we will similarly generalize results of Dade--Taussky--Zassenhaus \cite{Dade-Zassenhaus}, and Ge and Buchmann--Eisenbrand \cite{Ge,Buchmann-Eisenbrand}.
Although we opt to present full proofs for completeness, the contributions of the present paper to these generalizations should be considered minor.
Our main result is as follows.

\begin{theorem}[Informal]\label{thm:informal}
There is a functorial polynomial-time algorithm that, given an order $R$ and a fractional ideal $\mathfrak{a}$ of $R$, computes the maximal $n\in\Z_{\geq 0}$ with respect to divisibility such that there exist an order $R\subseteq S\subseteq \Q R$ and a fractional ideal $\mathfrak{b}$ of $S$ such that $S\mathfrak{a}=\mathfrak{b}^n$, and additionally computes such $S$ and $\mathfrak{b}$. 
\end{theorem}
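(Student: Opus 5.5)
Our plan is to reduce the root-taking problem to the structure of the multiplicative monoid of fractional ideals over an order that is ``locally as good as it can be made''. In the first step we replace $R$ by a functorially defined overorder in which $\mathfrak{a}$ becomes invertible. Concretely, we take the multiplier ring $S_0=(\mathfrak{a}:\mathfrak{a})$ and iterate $S\mapsto (S\mathfrak{a}:S\mathfrak{a})$. This uses the generalization of Dade--Taussky--Zassenhaus: for $m$ large, $\mathfrak{a}^m$ is invertible over its multiplier ring. Each iteration is canonical, so every isomorphism of pairs $(R,\mathfrak{a})$ induces one of the outputs. The chain stabilizes after at most $\log_2$ of the discriminant index steps, so this phase runs in polynomial time.

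In the second step we handle the multiplicative structure. We apply a Ge/Buchmann--Eisenbrand style coprime-base algorithm to the primes appearing in the denominators and numerators of $\mathfrak{a}$, to $\operatorname{disc}(R)$, and to the norms of the generators. This produces a factorization $S\mathfrak{a}=\prod_i \mathfrak{q}_i^{e_i}$ into pairwise coprime invertible ideals $\mathfrak{q}_i$. We then test whether each $\mathfrak{q}_i$ is ``locally principal and indecomposable''. Wherever a test fails, the failure exhibits either a further factorization or a strictly larger order; for example, an idempotent or an element of $\mathcal{O}\setminus S$ obtained from $(\mathfrak{q}:\mathfrak{q})$ or from radical computations $\mathfrak{q}\mapsto(\mathfrak{r}:\mathfrak{r})$ with $\mathfrak{r}$ the radical. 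We enlarge $S$ accordingly and repeat. Since $[\mathcal{O}:S]$ divides a quantity bounded by the discriminant and the number of factors is bounded by the input size, only polynomially many refinements occur. Every choice is made canonically (coprime bases are unique, multiplier rings are canonical), so functoriality is preserved. At termination we output $n=\gcd_i e_i$ (with $n=0$ when $S\mathfrak{a}=S$) and $\mathfrak{b}=\prod_i\mathfrak{q}_i^{e_i/n}$.

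The main obstacle is maximality of $n$ with respect to divisibility. We must show that if some overorder $S'\supseteq R$ and ideal $\mathfrak{b}'$ satisfy $S'\mathfrak{a}=\mathfrak{b}'^{m}$, then $m\mid n$. The plan is to pass to the compositum order $SS'$ and use that extension of invertible coprime ideals preserves coprimality. The key claim is that at termination each $\mathfrak{q}_i$ is not a proper power in any overorder; this is the criterion the refinement loop guarantees. Proving it requires the local analysis: at each maximal ideal $\mathfrak{p}$, the localization $S_\mathfrak{p}\mathfrak{q}_i$ has a valuation-like exponent that stays unchanged under integral extension once $\mathfrak{q}_i$ equals its own radical-adjusted conductor. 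I expect this to be the hardest part. We would first try to prove it via the length function $\ell(S/\mathfrak{q})$ together with multiplicativity of lengths in the $S$-module sense for invertible ideals.
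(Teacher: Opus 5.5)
Your outline has two genuine gaps.

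\textbf{Step~1 does not produce an order in which $\mathfrak{a}$ is invertible.} Since $S_0=\mathfrak{a}:\mathfrak{a}$ already satisfies $S_0\mathfrak{a}=\mathfrak{a}$, the iteration $S\mapsto S\mathfrak{a}:S\mathfrak{a}$ is constant from the outset. The multiplier ring of $\mathfrak{a}$ need not make $\mathfrak{a}$ invertible: the trace dual $R^\vee$ of any non-Gorenstein order $R$ (for instance $R=\Z+p\mathcal{O}_K$ with $K$ cubic) satisfies $R^\vee:R^\vee=R$ without being invertible. The Dade--Taussky--Zassenhaus fact you quote concerns $\mathfrak{a}^k$. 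What it actually gives (Theorems~\ref{thm:main_ideal} and~\ref{thm:blowup}) is that the single ring $\mathfrak{a}^k:\mathfrak{a}^k$ with $k=\rk_\Z R-1$ is the minimal overorder in which $\mathfrak{a}$ itself becomes invertible. You should also blow up at $R+\mathfrak{a}$, so that $S\mathfrak{a}$ splits as a quotient of two coprime integral invertible ideals (Lemma~\ref{lem:ideal_as_fraction}). This is also what detects the torsion case $n=0$ (Proposition~\ref{prop:torsion}), which your outline does not address, since a torsion invertible ideal need not equal $S$.

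\textbf{The refinement loop has no precise stopping criterion, and this is the central gap.} Because of it, the key claim that at termination no $\mathfrak{q}_i$ is a proper power in any overorder is asserted, not guaranteed. ``Locally principal and indecomposable'' is not a testable condition. The remedies you suggest are also not polynomial-time. The radical $\mathfrak{r}$ of $\mathfrak{q}$ satisfies $\mathfrak{r}\cap\Z=\textup{rad}(q)\Z$, where $q\Z=\mathfrak{q}\cap\Z$, so computing it computes squarefree kernels of integers. Iterating $\mathfrak{r}\mapsto\mathfrak{r}:\mathfrak{r}$ is the usual route to $\mathcal{O}$. Both are exactly the factoring-type problems the paper is built to avoid. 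Choosing ``an element of $\mathcal{O}\setminus S$'' would moreover break functoriality.

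The missing idea is Theorem~\ref{thm:root_exists_condition}: if $S/\mathfrak{q}$ is finite-\'etale over $\Z/q\Z$, then an $m$-th root of $\mathfrak{q}$ in some overorder $S'$ forces $q$ to be an $m$-th power in $\Z$. This is where your length idea genuinely works, and only because of the \'etale hypothesis:
\begin{enumerate}
\item Freeness over $\Z/p^k\Z$ makes all layers $A_i/A_{i+1}$ of the filtration $A_i=p^i(S/\mathfrak{q})$ isomorphic. This gives $k\,[A_0/A_1]=[S/\mathfrak{q}]=m\,[S'/\mathfrak{b}]$ as functions on $\maxspec S$.
\item $A_0/A_1$ is a product of fields, so it has local length $1$ at any maximal ideal above $p$. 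Hence $m\mid k$.
\end{enumerate}

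So the loop should stop on $\mathfrak{q}$ exactly when $S/\mathfrak{q}$ is finite-\'etale over $\Z/q\Z$ and $q$ is not a perfect power; the answer is then $n=1$. Otherwise one of the following holds:
\begin{enumerate}
\item Proposition~\ref{prop:alg_etale_or_else} functorially produces a proper nonzero ideal of $S/\mathfrak{q}$.
\item It produces a proper divisor $d$ of $q$; use $dS+\mathfrak{q}$.
\item $q=b^m$ with $m>1$; use $bS+\mathfrak{q}$.
\end{enumerate}
In each case you get $\mathfrak{q}\subsetneq\mathfrak{d}\subsetneq S$ to feed into Theorem~\ref{thm:coprime_basis}. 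Every resulting factor strictly contains $\mathfrak{q}$, so the recursion is well-founded and stays within the polynomial length budget.
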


In the edge case that \(\mathfrak{a}\) is \emph{torsion}, i.e. satisfies $\mathfrak{a}^\ell=R$ for some $\ell>0$, there is some order \(S\) such that \(S=S\mathfrak{a}\) (Lemma~\ref{lem:torsion_equiv}).
Then we have \(\mathfrak{b}^k=S\mathfrak{a}\) for \(\mathfrak{b}=S\) and all \(k\geq 0\), so the algorithm will return \(n=0\) on input $(R,\mathfrak{a})$.

Theorem~\ref{thm:informal} is informal in the sense that we have not defined what it means for the algorithm to be functorial. We will now restate it more precisely.
Consider the category $\textup{Frc}$ whose objects are pairs $(R,\mathfrak{a})$, where $R$ is an order and $\mathfrak{a}$ is a fractional ideal of $R$, and where the morphisms $(R,\mathfrak{a})\to(R',\mathfrak{a}')$ are the ring isomorphisms $f:\Q R\to \Q R'$ such that $f(R)=R'$ and $f(\mathfrak{a})=\mathfrak{a}'$. 
For $n\in\Z_{\geq 0}$, an \emph{$n$-th root} of $(R,\mathfrak{a})\in\textup{Frc}$ is an $(S,\mathfrak{b})\in\textup{Frc}$ such that $R\subseteq S\subseteq \Q R$ and $\mathfrak{b}^n=S\mathfrak{a}$.

\begin{theorem}[Theorem~\ref{thm:max_ideal_root}]\label{mainthm:max_ideal_root}
There exists a functor $F:\textup{Frc}\to\Z_{\geq0}\times \textup{Frc}$ such that for the objects $A\in\textup{Frc}$ and $(n,B)=F(A)$ we have that $B$ is an $n$-th root of $A$ such that $n$ is maximal with respect to divisibility among all roots of $A$, and for the morphisms $f:A\to A'$ we have $F(f)=\id \times f$. 
Moreover, there exists a polynomial-time algorithm that takes as input objects $A,A'\in\textup{Frc}$ and a morphism $f:A\to A'$, and computes $F(A)$, $F(A')$ and $F(f)$.
\end{theorem}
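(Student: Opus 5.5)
The plan is to build \(F\) canonically, so that functoriality is automatic. Every step is an operation on \((R,\mathfrak{a})\) that commutes with ring isomorphisms: ring of multipliers, products and colons of ideals, radicals, and so on. Whenever such a step fails to give what the maximal order would give, it will instead produce a strictly larger order, also defined canonically. The candidate exponent \(n\) comes from the valuations of \(\mathfrak{a}\) at the primes of the maximal order, which we cannot access directly. So we approximate them.

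\textbf{Step 1: reduce to invertibility over a "good" order.} Replace \(R\) by the multiplier ring \((\mathfrak{a}:\mathfrak{a})\), and iterate this together with the generalized Dade--Taussky--Zassenhaus statement that \(\mathfrak{a}^k\) becomes invertible over its multiplier ring for \(k\) large (bounded by the rank). This yields a canonical order \(R_1\) over which \(R_1\mathfrak{a}\) is invertible. It is harmless, because if \((S,\mathfrak{b})\) is any root then \(S\) may be enlarged to contain the multiplier ring of \(\mathfrak{b}\), hence that of \(S\mathfrak{a}\).

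Next, write \(R_1\mathfrak{a}=\mathfrak{c}\mathfrak{d}^{-1}\) with integral coprime parts, computed canonically, for example by \(\mathfrak{c}=\mathfrak{a}\cap R_1\) after suitable normalisation. Then apply a canonical coprime-base (factor-refinement) algorithm in the style of Ge and Buchmann--Eisenbrand to the set \(\{\mathfrak{c},\mathfrak{d}\}\). This produces pairwise coprime invertible ideals \(\mathfrak{p}_1,\dots,\mathfrak{p}_r\) with \(R_1\mathfrak{a}=\prod\mathfrak{p}_i^{e_i}\). Whenever refinement needs an invertibility or cancellation property that fails, we pass canonically to the multiplier ring of the offending ideal, which strictly enlarges \(R_1\). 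The coprime base depends only on the input up to isomorphism, so an isomorphism \(f\) maps it to the base for \(f(A)\).

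\textbf{Step 2: make the exponents honest.} The key point is that \(\gcd(e_i)\) should equal the maximal root exponent. The obstruction is that a base element \(\mathfrak{p}_i\) might itself be a proper power in some overorder, for instance \(\mathfrak{p}_i=\mathfrak{q}^2\) with \(\mathfrak{q}\) only available over a larger \(S\). To handle this, for each \(\mathfrak{p}_i\) we test canonically whether it is a power: compute the radical \(\sqrt{\mathfrak{p}_i}\) (polynomial time via the \(p\)-radical method) and refine the base using radicals. If the radical \(\mathfrak{r}\) is not invertible, we enlarge to \((\mathfrak{r}:\mathfrak{r})\). Otherwise we add \(\mathfrak{r}\) to the base, which makes every base element "squarefree" (equal to its own radical).

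For squarefree invertible pairwise coprime ideals, I will prove that the maximal \(n\) is exactly \(g=\gcd(e_1,\dots,e_r)\), with \(\mathfrak{b}=\prod\mathfrak{p}_i^{e_i/g}\). For any root \((S,\mathfrak{b})\), localise at a maximal ideal \(\mathfrak{m}\) of \(S\) lying over \(\mathfrak{p}_i\). There \(S\mathfrak{p}_i\) is radical, invertible and hence principal, and it generates \(\mathfrak{m}\)-locally a power of the local generator of \(\mathfrak{b}\). Comparing valuations in the normalisation then forces \(n\mid e_i\). The torsion case \(\mathfrak{a}\) (\(r=0\)) gives \(n=0\) by convention, using Lemma~\ref{lem:torsion_equiv}.

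Termination and polynomial time follow because each enlargement strictly increases the order inside \(\Q R\), and the index is bounded by the discriminant, so there are at most \(\log|\mathrm{disc}|\) enlargements. Each refinement step is polynomial by the Buchmann--Eisenbrand analysis. The morphism \(F(f)=\id\times f\) is computed by simply applying \(f\), since every construction is canonical.

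I expect the main obstacle to be Step 2's maximality argument when \(\Q R\) has zero-divisors and the order is non-Dedekind. There we must show that a squarefree invertible ideal cannot acquire a root in an overorder. My first attempt would reduce to the local rings \(S_\mathfrak{m}\) and use that an invertible radical ideal in a one-dimensional reduced local ring is generated by a non-zero-divisor whose image in the integral closure has all valuations \(1\). This needs \(\Q R\) reduced. If \(\Q R\) is not reduced, I would first quotient out the nilradical, or handle nilpotents separately via the unit group \(1+\text{nil}\).
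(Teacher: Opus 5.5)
Your Step~2 is where the proof breaks, and it is the step that carries the weight of the theorem. Computing $\sqrt{\mathfrak{p}_i}$ is not a polynomial-time operation, because the $p$-radical method presupposes that the primes dividing $\mathfrak{p}_i\cap\Z$ are known. Already for $R=\Z$ and $\mathfrak{a}=a\Z$ with $a>1$, your Step~2 asks for $\sqrt{a\Z}$, i.e.\ the squarefree kernel of $a$. That is not known to be computable in polynomial time; by Buchmann--Lenstra \cite{Buchman-Lenstra} it is essentially as hard as computing maximal orders. The correct answer here, the largest $m$ with $a$ an $m$-th power, is easy.

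Conceptually, if $\mathfrak{p}$ is radical and invertible, then for every maximal $\mathfrak{m}\supseteq\mathfrak{p}$ the ideal $\mathfrak{p}_\mathfrak{m}=\mathfrak{m}R_\mathfrak{m}$ is generated by a regular element, so $R_\mathfrak{m}$ is a discrete valuation ring. Thus ``make every base element radical and invertible'' amounts to computing the normalization of $R$ at the support of $\mathfrak{a}$, which is exactly what the paper is designed to avoid. Your maximality claim for such a base is fine, and simpler than you suggest: $R_\mathfrak{m}$ being a DVR forces $S_\mathfrak{m}=R_\mathfrak{m}$ for every overorder $S$, hence $k\mid e_i$.

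The same observation shows that the non-reduced case you flag is fatal for this route, not a technicality. In $R=\Z[\varepsilon]/(\varepsilon^2)$ every overorder is $\Z[\varepsilon/M]$ and all its local rings are non-reduced. So the radical of $pR$ never becomes invertible, Step~2 loops through $\Z[\varepsilon/p^j]$, and your discriminant bound is vacuous since the discriminant is $0$. Passing to $R/\textup{nil}(R)$ changes the problem, and you give no argument relating its roots to roots in orders inside $\Q R$.

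The missing idea is to replace ``radical'' by a weaker condition that can be certified or refuted without factoring: $R/\mathfrak{a}$ finite-\'etale over $\Z/a\Z$, where $a\Z=\mathfrak{a}\cap\Z$.
\begin{enumerate}
\item Proposition~\ref{prop:alg_etale_or_else} either certifies this or yields some $\mathfrak{a}\subsetneq\mathfrak{d}\subsetneq R$, which you refine with via Theorem~\ref{thm:coprime_basis}.
\item Once \'etaleness holds, Theorem~\ref{thm:root_exists_condition} shows that any root exponent $m$ of $S\mathfrak{a}$, in any overorder $S$, makes $a$ an $m$-th power in $\Z$.
\item Write $a=b^m$ with $m$ maximal; perfect-power detection is polynomial time. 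Either $b\notin\mathfrak{a}$, and $bR+\mathfrak{a}$ is a new proper refinement; or $m=1$, and the answer is $n=1$, $\mathfrak{b}=\mathfrak{a}$.
\end{enumerate}
For example, $R=\Z$ and $\mathfrak{a}=p^2q\Z$ stops at once without ever finding $p$ or $q$.

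Be aware that the non-reduced case is delicate for the paper's recursion as well. On the example above, refining $pR$ by $pR+\varepsilon R$ only produces the isomorphic pair $(\Z[\varepsilon/p],\,p\Z[\varepsilon/p])$.
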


One recovers Theorem~\ref{thm:informal} by applying Theorem~\ref{mainthm:max_ideal_root} to the identity map of $(R,\mathfrak{a})$.

We will proceed to state the non-algorithmic counterpart to Theorem~\ref{thm:informal}.
Let \(R\) be a commutative ring.
We write \(\textup{Q}(R)\) for the total ring of fractions of \(R\) and we identify \(R\) with its image in \(\textup{Q}(R)\).
Let \(A,B\subseteq\textup{Q}(R)\) be \(R\)-submodules.
We write \(A+B\) (resp.\ \(A\cdot B\)) for the \(R\)-module generated by the elements \(a+b\) (resp.\ \(a\cdot b\)) for \(a\in A\) and \(b\in B\).
We recursively define \(A^0=R\) and \(A^{k+1}=A^k\cdot A\) and write \(A:B=\{x\in\textup{Q}(R)\,|\, x\cdot B\subseteq A\}\).
A \emph{fractional ideal} of \(R\) is a finitely generated \(R\)-submodule \(\mathfrak{a}\subseteq \textup{Q}(R)\) such that \(\mathfrak{a}\cdot \textup{Q}(R)=\textup{Q}(R)\), and an \emph{invertible ideal} of \(R\) is an \(R\)-submodule \(\mathfrak{a}\subseteq\textup{Q}(R)\) for which there exists an \(R\)-submodule \(\mathfrak{b}\subseteq\textup{Q}(R)\) such that \(\mathfrak{a}\cdot \mathfrak{b} = R\).

\begin{theorem}[Theorem~\ref{thm:root_exists_condition}]\label{mainthm:root_exists_condition}
Let \(Z\subseteq R\subseteq S \subseteq \textup{Q}(R)\) be commutative (sub)rings such that \(Z\) is Dedekind and \(S\) is finitely generated as a \(Z\)-module, let \(\mathfrak{a}\subseteq R\) be an invertible ideal and let \(m\in\Z_{\geq 0}\).
Write \(a=\mathfrak{a}\cap Z\) and suppose \(R/\mathfrak{a}\) is finite-\'etale over \(Z/a\).
If there exists an ideal \(\mathfrak{b}\subseteq S\) with \(S\mathfrak{a}=\mathfrak{b}^m\), then there exists an ideal \(b\subseteq Z\) with \(a=b^m\).
\end{theorem}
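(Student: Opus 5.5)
The plan is to reduce to a local computation at each maximal ideal of \(Z\), pass to the normalization, and compare lengths. The key point is that the finite-étale hypothesis makes \(R/\mathfrak{a}\) have length exactly \(v_\mathfrak{p}(a)\) over a suitable unramified base. By a length count this is a sum of valuations of \(\mathfrak{a}\) in the normalization, and each of those is divisible by \(m\).

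Since \(Z\) is Dedekind, it suffices to show that for every maximal ideal \(\mathfrak{p}\) of \(Z\) the exponent \(e=v_\mathfrak{p}(a)\) is divisible by \(m\). We may assume \(e\geq 1\); the case \(e=0\) is trivial. We may also assume that \(a\neq 0\), which holds because \(\mathfrak{a}\) is invertible and hence contains a non-zerodivisor whose norm lies in \(\mathfrak{a}\cap Z\).

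\textbf{Step 1: reduce to complete local components.} Localize and complete everything at \(\mathfrak{p}\), which preserves all hypotheses. Then \(\hat Z\) is a complete DVR with uniformizer \(\pi\), and \(\hat R\) splits as a finite product of complete local rings \(R_i\). The ideal \(\hat{\mathfrak{a}}\) splits as \(\prod \mathfrak{a}_i\), each \(\mathfrak{a}_i=x_iR_i\) principal, since an invertible ideal of a local ring is principal. Likewise \(\hat S\) splits compatibly. The quotient \(\hat R/\hat{\mathfrak a}=\prod R_i/x_iR_i\) is finite étale over \(\hat Z/\pi^e\). Each nonzero factor is flat, hence faithfully flat, over \(\hat Z/\pi^e\), so it satisfies \(x_iR_i\cap\hat Z=\pi^eR_i\cap \hat Z\). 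Fix one component \(R=R_i\) with \(xR\neq R\); at least one exists because \(e\geq1\). Let \(k\) be its residue field. Étaleness makes \(k\) separable over \(\hat Z/\pi\) and makes \(R/xR\) local étale, hence free of rank \([k:\hat Z/\pi]\) over \(\hat Z/\pi^e\).

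\textbf{Step 2: lift the residue field.} By Hensel's lemma, since \(R\) is complete and \(k\) is separable, \(R\) contains the unramified extension \(W\) of \(\hat Z\) with residue field \(k\). Then \(R/xR\) is a local étale \(W/\pi^e\)-algebra with the same residue field, so \(R/xR\cong W/\pi^eW\). In particular the length of \(R/xR\) as a \(W\)-module is exactly \(e\). This is the step I expect to be the main obstacle. The naive guess \(\mathfrak a=\pi^eR\) fails, as \(R=\Z_p[\sqrt p]\), \(x=\sqrt p\) shows. The whole argument therefore has to be routed through lengths over \(W\) rather than over \(\hat Z\). Lengths over \(\hat Z\) only give \(m\mid e\,[k:\hat Z/\pi]\), which is too weak.

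\textbf{Step 3: pass to the normalization and count lengths.} Let \(\mathcal O\) be the integral closure of \(R\) in its total ring of fractions. We may assume \(\mathcal O\) is finite over \(R\); this holds because \(S\) is finite over \(Z\), and we may reduce to the situation where \(\mathcal O\supseteq S\)-component. Then \(\mathcal O\) is a finite product of DVRs \(\mathcal O_j\) with valuations \(v_j\) and residue fields \(k_j\supseteq k\). From \(S\mathfrak a=\mathfrak b^m\) we get \(x\mathcal O=(\mathcal O\mathfrak b)^m\), and \(\mathcal O\mathfrak b\) is principal in each DVR factor, so \(m\mid v_j(x)\) for all \(j\). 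Since \(x\) is a non-zerodivisor and \(R\subseteq\mathcal O\) have the same rank over \(W\), we have \(\operatorname{len}_W(R/xR)=\operatorname{len}_W(\mathcal O/x\mathcal O)\). This follows from the snake lemma applied to multiplication by \(x\) on \(0\to R\to\mathcal O\to\mathcal O/R\to0\), using that \(\mathcal O/R\) has finite length. Therefore
\[ e=\operatorname{len}_W(R/xR)=\sum_j v_j(x)\,[k_j:k], \]
and the right-hand side is divisible by \(m\). Hence \(m\mid e\) for every \(\mathfrak p\), and \(b=\prod_\mathfrak p \mathfrak p^{v_\mathfrak p(a)/m}\) satisfies \(a=b^m\).
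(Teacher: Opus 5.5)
\textbf{Gap in Step~3.} Your Steps~1 and~2 are sound, and \(\operatorname{len}_W(R/xR)=e\) is the right length to use. The gap is in Step~3, which asserts that the integral closure \(\mathcal{O}\) of \(R\) is finite over \(R\) and is a finite product of DVRs.

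Your justification does not give this. Finiteness of \(S\) over \(Z\) only yields \(S\subseteq\mathcal{O}\); it says nothing about finiteness of \(\mathcal{O}\). In the generality of the statement the assertion is false, because the rings may contain nilpotents. Take \(Z\) a complete DVR with uniformizer \(\pi\) and fraction field \(K\), and set
\(R=Z[\varepsilon]/(\varepsilon^2)\), \(\mathfrak{a}=(\pi^2+\varepsilon)R\), \(S=Z[\varepsilon/\pi^2]\subseteq\textup{Q}(R)\) and \(\mathfrak{b}=\pi S\).
Then \(\pi^2+\varepsilon\) is regular, \(R/\mathfrak{a}\cong Z/\pi^4Z=Z/a\), and \(S\mathfrak{a}=\pi^2(1+\varepsilon/\pi^2)S=\mathfrak{b}^2\). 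So every hypothesis holds with \(m=2\). Yet the integral closure of \(R\) in \(\textup{Q}(R)=K[\varepsilon]/(\varepsilon^2)\) is \(Z+K\varepsilon\). This ring is neither finite over \(R\) nor a product of DVRs, so the valuations \(v_j\) do not exist. Even for reduced \(R\), the completion in Step~1 can acquire nilpotents when \(Z\) is not excellent.

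\textbf{Repair.} The detour through \(\mathcal{O}\) is unnecessary: work with the corresponding component of \(\hat S\) itself, as the paper does.
For \(m\geq 1\) the ideal \(\mathfrak{b}\) is invertible, since \(\mathfrak{b}\cdot\mathfrak{b}^{m-1}(S\mathfrak{a})^{-1}=S\). Hence \(\mathfrak{b}=\beta S\) with \(\beta\) regular, because \(S\) is semi-local (Proposition~\ref{prop:semi_loc_inv}).
Your snake-lemma argument applies verbatim to \(R\subseteq S\), since \(S/R\) has finite length; this gives \(\operatorname{len}_W(R/xR)=\operatorname{len}_W(S/xS)\), which is Proposition~\ref{prop:invertible_ideal_finite_extension}.
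The chain \(S\supseteq\beta S\supseteq\dots\supseteq\beta^mS=xS\) has all successive quotients isomorphic to \(S/\beta S\), so \(\operatorname{len}_W(S/xS)=m\cdot\operatorname{len}_W(S/\beta S)\). For \(m=0\) one gets \(S=xS\), hence \(R=xR\) and \(e=0\).

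\textbf{Comparison with the paper.} The paper also avoids completion and Hensel's lemma. It measures lengths over \(R_\fm\) for a maximal ideal \(\fm\supseteq\alpha R+\pi R\). Freeness of \(R/\alpha R\) over \(Z/\pi^kZ\) makes the \(\pi\)-adic filtration have \(k\) graded pieces, each isomorphic to \(R/(\alpha R+\pi R)\), which is a product of fields. So \((R/\alpha R)_\fm\) has \(R_\fm\)-length exactly \(k\). This is the same idea as your \(W\), obtained more cheaply.
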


The algorithm of \cite{Belabas} and the algorithm of Theorem~\ref{thm:informal} can both be summarized as partially factoring $\mathfrak{a}$ depending on the $\Z/(\mathfrak{a}\cap\Z)$-algebra structure of $R/\mathfrak{a}$ until the maximal root of $\mathfrak{a}$ can be read off from this factorization.
In fact, for the inputs in which both algorithms apply, their behavior is essentially the same.
We use Theorem~\ref{mainthm:root_exists_condition} (cf.\ Lemma~2.2 in \cite{Belabas}) to show that once we are unable to further factor $\mathfrak{a}$ we have indeed computed its maximal root.
For the factorization of $\mathfrak{a}$ we require a generalization of the factor refinement algorithm of Ge \cite{Ge} for fractional ideals of orders in number rings.
Buchmann--Eisenbrand \cite{Buchmann-Eisenbrand} show that the algorithm of Ge can be made functorial using results from Dade--Taussky--Zassenhaus \cite{Dade-Zassenhaus} on blowups.
We will generalize these results to a broader class of rings.

For a maximal ideal \(\m\subset R\) we define \(n(\m)\) to be the minimal cardinality such that every ideal of the localization \(R_\m\) can be generated as $R_\m$-module by \(n(\m)\) elements.
We extend the definition of \(n\) to fractional ideals \(\mathfrak{a}\) of \(R\), where \(n(\mathfrak{a})=\max\{n(\m) : \mathfrak{a}_\m \neq R_\m\}\).
If \(R\) is finitely generated by say \(r\) elements as a module over some principal ideal domain \(Z\), for example \(Z=\Z\), then \(n(\mathfrak{a})\leq r\) for all fractional ideals \(\mathfrak{a}\) of \(R\).
The assumptions on \(R\) in the following theorem imply that \(n(\mathfrak{a})\) is always a finite quantity (Theorem~\ref{thm:finite_local_generators}).

\begin{theorem}[Theorem~\ref{thm:main_ideal}]\label{mainthm:main_ideal}
Let \(R\) be a Noetherian commutative ring of Krull dimension at most \(1\) such that \(\textup{Q}(R)\) is Artinian and let \(\mathfrak{a}\) be a fractional ideal of \(R\).
Among all subrings \(R\subseteq S \subseteq\textup{Q}(R)\) such that \(S\mathfrak{a}\) is an invertible ideal of \(S\), there exists a unique minimum with respect to inclusion.
This ring is Noetherian as \(R\)-module and equals \(\mathfrak{a}^{n}:\mathfrak{a}^{n}\) for all \(n+1\geq n(\mathfrak{a})\). 
\end{theorem}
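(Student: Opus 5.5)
Throughout, write \(S_k=\mathfrak{a}^k:\mathfrak{a}^k\) and \(n=n(\mathfrak{a})\). The plan is to show directly that the candidate \(S_k\) (for \(k+1\geq n\)) makes \(S_k\mathfrak{a}\) invertible and is contained in every ring \(S\) with the same property. Then \(S_k\) is the unique minimum, and it is automatically independent of \(k\).

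\emph{Minimality.} Let \(R\subseteq S\subseteq \textup{Q}(R)\) with \(S\mathfrak{a}\) invertible in \(S\). Then \((S\mathfrak{a})^k=S\mathfrak{a}^k\) is invertible, so
\[
S\mathfrak{a}^k:S\mathfrak{a}^k=S .
\]
If \(x\mathfrak{a}^k\subseteq\mathfrak{a}^k\), then \(xS\mathfrak{a}^k\subseteq S\mathfrak{a}^k\), so \(x\in S\). Hence \(S_k\subseteq S\) for every \(k\geq 0\). Also \(S_k\) is a ring containing \(R\). Since \(\textup{Q}(R)\mathfrak{a}^k=\textup{Q}(R)\), we can pick a nonzerodivisor \(d\in\mathfrak{a}^k\) (prime avoidance over the Artinian \(\textup{Q}(R)\)). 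Then \(S_k\subseteq d^{-1}\mathfrak{a}^k\), a finitely generated \(R\)-module, so \(S_k\) is Noetherian as an \(R\)-module.

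\emph{Invertibility.} It remains to show that \(S_k\mathfrak{a}\) is invertible in \(S_k\) when \(k+1\geq n\). Invertibility of a finitely generated ideal containing a nonzerodivisor is local, so we check it at each maximal ideal of \(S_k\). Since \(S_k\) is finite over \(R\), this reduces to semilocal rings \((S_k)_\m\) over the localizations \(R_\m\). At \(\m\) with \(\mathfrak{a}_\m=R_\m\) there is nothing to do. Otherwise every ideal of \(R_\m\), and hence every fractional ideal, is generated by \(n(\m)\leq n\) elements. The key step is then a Dade--Taussky--Zassenhaus-type lemma: over a one-dimensional Noetherian local ring whose fractional ideals need at most \(n\) generators, there is \(a\in\mathfrak{a}_\m\) with \(\mathfrak{a}_\m^{k+1}=a\,\mathfrak{a}_\m^{k}\) for \(k\geq n-1\). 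Granting this, we have \(a^{-1}\mathfrak{a}\subseteq S_k\) locally, so \(S_k\mathfrak{a}=aS_k\) locally, which is principal and hence invertible. Notice that this same equality shows \(S_k=S_{k+1}\) locally.

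\emph{Main obstacle: the reduction-number lemma.} I would prove it by the Hilbert-function argument. If the residue field is infinite, pick a general element \(a\) generating a minimal reduction of \(\mathfrak{a}_\m\); this uses analytic spread \(\leq\dim=1\), which here is adapted to Artinian \(\textup{Q}(R)\), so \(a\) is a nonzerodivisor. Then compare lengths: \(\ell(\mathfrak{a}^k/a\mathfrak{a}^{k})\) is bounded by the number of generators of \(\mathfrak{a}^k\). By the Sally-type argument, once \(\mathfrak{a}^{k+1}\neq a\mathfrak{a}^k\), the lengths \(\ell(\mathfrak{a}^{j}/a\mathfrak{a}^{j-1})\) strictly increase. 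Since each is bounded by \(n\), stabilization occurs by step \(n-1\).

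If the residue field is finite, pass to \(R_\m[X]_{\m[X]}\). This is a faithfully flat extension with infinite residue field that preserves \(n(\m)\) and colon ideals. We then descend the equality \(S_k=S_{k+1}\) and the invertibility, since invertibility descends under faithfully flat maps. The finite residue field case, together with the control of the generator count under this extension, is where I expect the real work to lie. Possibly Theorem~\ref{thm:finite_local_generators} supplies the needed bound directly.
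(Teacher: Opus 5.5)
Your outer structure is sound, but the key step is unproven. The minimality argument is exactly the paper's, and \(S_k\subseteq d^{-1}\mathfrak{a}^k\) gives Noetherianity. The localization step is also fine: if at each \(\m\in\maxspec R\) with \(\mathfrak{a}_\m\neq R_\m\) there is a regular \(a\) with \(\mathfrak{a}_\m^{k+1}=a\mathfrak{a}_\m^k\), then \((S_k\mathfrak{a})_\m=a(S_k)_\m\) is invertible. The problem is that this reduction-number bound carries the entire content of the theorem, and the argument you sketch for it fails.

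\begin{itemize}
\item For regular \(a\) one has \(\ell(\mathfrak{a}^k/a\mathfrak{a}^k)=\ell(R/aR)\). This is not bounded by \(\gamma(\mathfrak{a}^k)\): in \(F[[t]]\) with \(\mathfrak{a}=t^5R\) the length is \(5\) but there is one generator. The inequality actually runs the other way.
\item The numbers \(\ell(\mathfrak{a}^j/a\mathfrak{a}^{j-1})\) are not increasing. In \(F[[t^3,t^7]]\) with \(\mathfrak{a}=\m\) and \(a=t^3\) they are \(2,1,0\), even though \(\mathfrak{a}^2\neq a\mathfrak{a}\). Here the reduction number is \(2=n(\mathfrak{a})-1\), so the theorem's bound is sharp.
\item They are also not bounded by \(n(\mathfrak{a})\). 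In \(F[[t^2,t^{2m+1}]]\) with \(\mathfrak{a}=(t^{2m},t^{2m+1})\) and \(a=t^{2m}\), the first one is \(m\), while \(n(\mathfrak{a})=2\).
\end{itemize}

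Your lemma is nevertheless true. It is the case \(r=1\) of the Eakin--Sathaye theorem: if \((R,\m)\) is local with infinite residue field and \(\gamma_R(I^N)\leq N\), then \(I^N=yI^{N-1}\) for some \(y\in I\). Citing it repairs your proof, and it also removes the finite-residue-field worry.

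\begin{itemize}
\item Over \(R_\m[X]_{\m[X]}\) you need no control of all its ideals. You only need \(\gamma(\mathfrak{a}^N_\m R_\m[X]_{\m[X]})=\gamma(\mathfrak{a}^N_\m)\leq N\), which holds by Nakayama (cf.\ Lemma~\ref{lem:min_gen}(iii)).
\item The resulting \(y\) is regular, because \(y\mathfrak{a}^{N-1}\) contains a regular element.
\item Invertibility of \((S_k\mathfrak{a})_\m\) then descends by faithful flatness, since colons of finitely presented modules commute with flat base change.
\end{itemize}

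So repaired, your route differs genuinely from the paper's. The paper proves the needed statement as Lemma~\ref{lem:module_has_one} inside Proposition~\ref{prop:local_case}. There it first uses Theorem~\ref{thm:exist-invertible-extension} (Krull--Akizuki) to reduce to \(S\mathfrak{a}=S\). It then uses a finite free extension \(A\) with large residue field (Lemma~\ref{lem:large_extension}) to find a unit \(u\in\mathfrak{a}_A\cap S_A^*\). Because it localizes at maximal ideals of \(S\), it also needs Lemma~\ref{lem:iota_ineq}.

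Your version works directly over \(R_\m\) and avoids Krull--Akizuki and Lemma~\ref{lem:iota_ineq}. The cost is importing Eakin--Sathaye and a non-finite flat extension. The gain is the extra information that, after enlarging the residue field, \(\mathfrak{a}\) has a principal reduction with reduction number less than \(n(\mathfrak{a})\).
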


Theorem~\ref{mainthm:main_ideal} generalizes Theorem~1.5.2 from \cite{Dade-Zassenhaus}, in the sense that \(R\) need not be a domain nor have a finitely generated integral closure.
We call the ring \(S\) from the theorem the \emph{blowup} of \(R\) at \(\mathfrak{a}\). 
One can show that it is a blowup in an algebro-geometric sense as defined in Proposition~7.14 in \cite{Hartshorne}. 
It is also not too difficult to prove most of Theorem~\ref{mainthm:main_ideal} and that the geometric blowup is affine by using established theory from algebraic geometry, as communicated to the author privately by Sebastian Bozlee.
However, it is unclear to the author whether the result that expresses \(S\) as \(\mathfrak{a}^{n}:\mathfrak{a}^{n}\) for bounded \(n\) can be similarly obtained.
From Theorem~\ref{mainthm:main_ideal} we may trivially construct an algorithm.

\begin{theorem}[Theorem~\ref{thm:blowup}]\label{mainthm:blowup}
There exists a polynomial-time algorithm that, given an order \(R\) and a fractional \(R\)-ideal \(\mathfrak{a}\), computes the unique minimal order \(R\subseteq S\subseteq \Q R\) such that \(S\mathfrak{a}\) is invertible.
\end{theorem}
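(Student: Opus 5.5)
The plan is to apply Theorem~\ref{mainthm:main_ideal} directly and compute \(S=\mathfrak{a}^{n}:\mathfrak{a}^{n}\) for a suitable small \(n\).

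An order \(R\) of rank \(r\) over \(\Z\) is Noetherian of Krull dimension at most \(1\). Its total ring of fractions is \(\textup{Q}(R)=\Q R\), a finite-dimensional \(\Q\)-algebra, because every non-zero-divisor of \(R\) is invertible in \(\Q R\); in particular \(\textup{Q}(R)\) is Artinian. So Theorem~\ref{mainthm:main_ideal} applies. It gives a unique minimal \(S\) with \(S\mathfrak{a}\) invertible, and this \(S\) is Noetherian as an \(R\)-module, hence free of finite rank over \(\Z\), hence an order. As noted before Theorem~\ref{mainthm:main_ideal}, \(n(\mathfrak{a})\le r\), since \(R\) is generated by \(r\) elements over \(\Z\). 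Choose \(n=\max(r-1,0)\). Then \(n+1\ge n(\mathfrak{a})\), and therefore \(S=\mathfrak{a}^{n}:\mathfrak{a}^{n}\).

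The algorithm has three steps, each standard in polynomial time on \(\Z\)-lattices in \(\Q R\) given by bases.

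\begin{enumerate}
\item Compute \(\mathfrak{a}^{k}\) for \(k=1,\dots,n\) by repeated multiplication. To multiply two lattices, form the products of basis elements and take the Hermite normal form.
\item Compute the colon \(\mathfrak{a}^{n}:\mathfrak{a}^{n}\). This equals \(\bigcap_{i}x_i^{-1}\mathfrak{a}^{n}\) over a \(\Z\)-basis \((x_i)\) of \(\mathfrak{a}^{n}\). Each \(x_i\) is a unit in \(\Q R\), since \(\Q\mathfrak{a}^n=\Q R\) forces multiplication by a basis element to be injective on the lattice. Intersecting lattices is linear algebra over \(\Z\) via HNF and kernels.
\item Output a basis of the result. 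It is a ring by Theorem~\ref{mainthm:main_ideal}.
\end{enumerate}

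The main obstacle is keeping the bit sizes polynomial. The number of steps is at most \(r\), which is polynomial in the input. But repeated powering could a priori make the entries grow exponentially. The first remedy I would try is to scale \(\mathfrak{a}\) by an integer so that \(\mathfrak{a}\subseteq R\), and choose \(d\in\Z\) with \(dR\subseteq\mathfrak{a}\); this is possible because \(\mathfrak{a}\) has full rank. Then \(d^{k}R\subseteq\mathfrak{a}^{k}\subseteq R\) for all \(k\). So every \(\mathfrak{a}^{k}\) is determined by its image in \(R/d^{k}R\), and its HNF has entries bounded by \(d^{k}\le d^{r}\), which is of polynomial bit length. Likewise \(\mathfrak{a}^{n}:\mathfrak{a}^{n}\subseteq d^{-n}R\), which keeps the denominators in the colon computation polynomially bounded.

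Scaling \(\mathfrak{a}\) by an integer changes neither the colon ring nor invertibility, so the output is the blowup of \(R\) at the original \(\mathfrak{a}\).
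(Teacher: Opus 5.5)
Your route is the paper's own. The proof of Theorem~\ref{thm:blowup} computes \(\mathfrak{a}^k\) for \(k\le n=\rk_\Z(R)-1\) and outputs \(\mathfrak{a}^n:\mathfrak{a}^n\), with correctness from Lemma~\ref{lem:iota_bound_basis} and Theorem~\ref{thm:main_ideal}. Your size control via \(d^kR\subseteq\mathfrak{a}^k\subseteq R\) is a sound elaboration of what the paper leaves to Section~\ref{sec:algorithmic_clich_e}.

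One step is false as written, however. In step~2 you claim that every \(\Z\)-basis element \(x_i\) of \(\mathfrak{a}^n\) is a unit of \(\Q R\). This holds when \(\Q R\) is a field. It fails for orders with zero divisors, which the paper deliberately allows. For example, with \(R=\Z\times\Z\) and \(\mathfrak{a}=R\) with basis \((1,0),(0,1)\), no basis element is a unit. The condition \(\Q\mathfrak{a}^n=\Q R\) only guarantees that \(\mathfrak{a}^n\) contains \emph{some} regular element, so \(\bigcap_i x_i^{-1}\mathfrak{a}^n\) is not defined in general.

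The repair uses what you already set up in your last paragraph. After scaling, \(d^n\in\mathfrak{a}^n\) is regular, so \(\mathfrak{a}^n:\mathfrak{a}^n\subseteq d^{-n}\mathfrak{a}^n\). Hence \(\mathfrak{a}^n:\mathfrak{a}^n\) is exactly the kernel of the \(\Z\)-linear map \(d^{-n}\mathfrak{a}^n\to\bigoplus_i d^{-n}\mathfrak{a}^n/\mathfrak{a}^n\), \(x\mapsto(xx_i\bmod\mathfrak{a}^n)_i\). This map is well defined because \(xx_i\in d^{-n}\mathfrak{a}^{2n}\subseteq d^{-n}\mathfrak{a}^n\), using \(\mathfrak{a}\subseteq R\). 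Its target is a finite group of exponent dividing \(d^n\), so the kernel is computable in polynomial time by Hermite normal form. With this fix your proof is complete.
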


We will proceed with the factor refinement algorithm.
For an order \(R\) and a set \(X\) of fractional ideals contained in \(R\), we define a \emph{coprime basis} of \(X\) to be a set \(Y\) of pairwise coprime invertible ideals contained in \(R\) such that \(X\) is contained in the group generated by \(Y\). 
For example, $\{5\Z, 6\Z\}$ is a coprime basis for $\{150\Z,180\Z\}$.
Note that a coprime basis need not exist; at the very least \(X\) must consist of invertible ideals. 
Instead, we allow factor refinement to enlarge the order similar to Theorem~\ref{thm:informal} and Theorem~\ref{mainthm:blowup}.
Aside from invertibility issues, we could take the coprime basis of $X$ consisting of all primes dividing an ideal of \(X\).
However, prime factorizations are generally inaccessible in polynomial time.
In a sense, a coprime basis approximates such a prime factorization. 

\begin{theorem}[Theorem~\ref{thm:coprime_basis}]
There exists a polynomial-time algorithm that, given an order \(R\) and a finite set \(X\) of fractional ideals contained in \(R\), computes the minimal order \(R\subseteq S\subseteq \Q R\) such that \(\{S\mathfrak{a} \,|\, \mathfrak{a}\in X \}\) has a coprime basis, and then computes the minimal coprime basis of \(\{S\mathfrak{a} \,|\, \mathfrak{a}\in X \}\).
\end{theorem}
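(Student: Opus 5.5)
The plan is to adapt Ge's factor refinement to orders, interleaved with the blowup algorithm of Theorem~\ref{mainthm:blowup}. The algorithm maintains an order \(S\) with \(R\subseteq S\subseteq \Q R\) and a finite multiset \(Y\) of invertible ideals of \(S\) contained in \(S\). Every element of \(\{S\mathfrak{a}\,|\,\mathfrak{a}\in X\}\) is kept in the group generated by \(Y\), and the goal is to make the members of \(Y\) pairwise coprime.

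First, replace \(R\) by the blowup at \(\prod_{\mathfrak{a}\in X}\mathfrak{a}\). Here I use that a product is invertible if and only if each factor is, so this is the least order over which all \(S\mathfrak{a}\) are invertible. Take \(Y=\{S\mathfrak{a}\}\). Then repeatedly pick \(\mathfrak{c},\mathfrak{d}\in Y\) that are not coprime and form \(\mathfrak{g}=\mathfrak{c}+\mathfrak{d}\). If \(\mathfrak{g}\) is not invertible, replace \(S\) by the blowup at \(\mathfrak{g}\) and extend all ideals of \(Y\). If it is invertible, replace \(\mathfrak{c},\mathfrak{d}\) by \(\mathfrak{g},\ \mathfrak{c}\mathfrak{g}^{-1},\ \mathfrak{d}\mathfrak{g}^{-1}\), dropping any copies of \(S\). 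This is the refinement step of Ge, with \(\gcd\) replaced by the ideal sum.

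\textbf{Minimality.} I would argue by induction that any order \(T\supseteq R\) over which \(\{T\mathfrak{a}\}\) has a coprime basis \(Z\) contains the current \(S\).

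\begin{itemize}
\item Each \(T\mathfrak{a}\) is then a product of powers of elements of \(Z\).
\item Each \(T\mathfrak{c}\), for \(\mathfrak{c}\in Y\), is too. This holds because it was built from sums and quotients, and for pairwise coprime invertible ideals localization shows \(T\mathfrak{c}+T\mathfrak{d}\) is again such a product.
\item Hence \(T\mathfrak{g}\) is invertible, so by minimality of the blowup (Theorem~\ref{mainthm:main_ideal}) \(T\) contains the new \(S\).
\end{itemize}

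Minimality of the final basis follows the same way: every element of the computed basis lies in the group of any other coprime basis, in a way forcing it to be a product of those basis elements. I would aim to show that every coprime basis refines the computed one, via a unique-factorization argument in the free abelian group on a coprime basis. Since the computed basis is canonical, this also gives uniqueness.

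\textbf{Polynomial running time.} I expect this to be the main obstacle. For refinement steps I would use Ge's potential argument: measure \(\sum_{\mathfrak{c}\in Y}\log\#(S/\mathfrak{c})\) and the size of \(Y\), as in Ge or Buchmann--Eisenbrand. The usual amortized analysis (Bach--Driscoll--Shallit style) should show the number of refinement steps is polynomial in \(\sum_{\mathfrak{a}}\log\#(R/\mathfrak{a})\).

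For blowups, each strictly enlarges \(S\) inside \(\Q R\), but not necessarily by a bounded index. I would bound \(\log[S:R]\) by the discriminant of \(R\) (via \([S:R]^2\mid \textup{disc}(R)/\textup{disc}(S)\)), and note the sizes of \(\mathfrak{c}\) only decrease. Representation sizes of ideals after extension then stay polynomial, via Hermite normal forms bounded by the index and by \(\#(S/\mathfrak{c})\).

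Care is needed because extending ideals to a larger \(S\) can change the potential. I would use that \(\#(S/S\mathfrak{c})\le\#(R/\mathfrak{c})\) for ideals contained in the order, so the potential does not increase under blowups, and the total work stays polynomial.
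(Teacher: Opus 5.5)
Your algorithm, and your arguments for correctness and for minimality of \(S\) and of the basis, are essentially the paper's. The paper merges each blowup at \(\mathfrak{g}\) with the refinement of that same pair into one step, and phrases your induction through \(\textup{cl}\) and Lemma~\ref{lem:closure_is_monoid}.

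The gap is in the running-time analysis. You bound the number of blowups, and the size of \(S\), through \(\log[S:R]\) and the discriminant. But the paper's orders include non-reduced rings such as \(R=\Z[\varepsilon]/(\varepsilon^2)\). There \(\textup{disc}(R)=0\), there is no maximal order, and \(\Q R\) contains the infinite ascending chain of orders \(\Z+\Z\varepsilon/N!\). So \([S:R]^2\mid\textup{disc}(R)/\textup{disc}(S)\) gives nothing, and ``each blowup strictly enlarges \(S\)'' bounds nothing. This is precisely the zero-divisor case the section is meant to add; your argument only covers reduced \(\Q R\).

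Separately, \(\#(S/S\mathfrak{c})\le\#(R/\mathfrak{c})\) is false for non-invertible \(\mathfrak{c}\). For \(R=\Z[2\ii]\subseteq S=\Z[\ii]\) and the conductor \(\mathfrak{c}=2S\subseteq R\), one has \(\#(R/\mathfrak{c})=2\) but \(\#(S/S\mathfrak{c})=4\). So the first blowup can push your potential above \(\sum_{\mathfrak{a}}\log\#(R/\mathfrak{a})\). For invertible \(\mathfrak{c}\) it is an equality, by Proposition~\ref{prop:invertible_ideal_finite_extension}.

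The fix needs no a priori bound on \([S:R]\), and this is the paper's argument.
\begin{enumerate}
\item \emph{The potential.} Once every member of \(Y\) is invertible, your potential \(\sum_{\mathfrak{c}\in Y}\log\#(S/\mathfrak{c})\) is unchanged by blowups, by Proposition~\ref{prop:invertible_ideal_finite_extension}. By multiplicativity of the index for invertible ideals, it drops by exactly \(\log\#(S/\mathfrak{g})\ge\log 2\) at each refinement. This also bounds \(\#Y\), so no Bach--Driscoll--Shallit amortization is needed.
\item \emph{Counting blowups.} Refine the pair \(\mathfrak{c},\mathfrak{d}\) immediately after blowing up \(S\) to \(T\) at \(\mathfrak{g}=\mathfrak{c}+\mathfrak{d}\). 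This is a genuine refinement, since \(T\mathfrak{g}\neq T\) by lying over. Then there are at most as many blowups as refinements.
\item \emph{Sizes.} With \(n=\rk_\Z R-1\), we have \(T=\mathfrak{g}^n:\mathfrak{g}^n\subseteq S:\mathfrak{c}^n=\mathfrak{c}^{-n}\), so \(\log[T:S]\le n\log\#(S/\mathfrak{c})\).
\item \emph{Initial potential.} The first blowup \(S_0\) at \(\mathfrak{p}=\prod_{\mathfrak{a}\in X}\mathfrak{a}\) satisfies \(S_0\subseteq a^{-n}R\) for any nonzero \(a\in\mathfrak{p}\cap\Z\). This bounds the initial potential via \(\#(S_0/S_0\mathfrak{a})\le[S_0:R]\,\#(R/\mathfrak{a})\).
\end{enumerate}
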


The author would like to thank his PhD advisor Hendrik Lenstra. 
The author would also like to thank Sebastian Bozlee for their discussion on the geometric interpretation of the blowup, and two anonymous reviewers for their valuable comments.
No AI was used in the making of this paper.

\section{Fractional ideals}

Throughout this section \(R\) is a commutative ring. 
In this section we establish some preliminary results on fractional ideals of \(R\).
Most of these are established results that can be found in more specialized or generalized form in for example  Sections~2 and~11 of \cite{Eisenbud} and Section II.5 of \cite{Bourbaki}.
We write \(\textup{Q}(R)\) for the \emph{total ring of fractions} of \(R\), i.e., the localization of \(R\) obtained by adjoining inverses of its regular elements, and \(\maxspec R\) for the set of maximal ideals of \(R\).
We say \(R\) is \emph{semi-local} if \(\maxspec R\) is finite.

\begin{definition}
Let \(S\) be a commutative \(R\)-algebra and \(A,B\subseteq S\) be \(R\)-submodules. 
We write \(A+B\) (resp.\ \(A\cdot B\)) for the \(R\)-module generated by the elements \(a+b\) (resp.\ \(a\cdot b\)) for \(a\in A\) and \(b\in B\).
We recursively define \(A^0=R\) and \(A^{n+1}=A^n\cdot A\) for \(n\in\Z_{\geq 0}\).
We write \((A:B)_S=\{x\in S : xB\subseteq A\}\) or simply \(A:B\) for \((A:B)_{\textup{Q}(R)}\).
\end{definition}

\begin{definition}
A \emph{fractional ideal} of \(R\) is a finitely generated \(R\)-submodule \(\mathfrak{a}\subseteq \textup{Q}(R)\) such that \(\textup{Q}(R)\cdot \mathfrak{a}=\textup{Q}(R)\). 
An \emph{invertible ideal} of \(R\) is an \(R\)-submodule \(\mathfrak{a}\subseteq\textup{Q}(R)\) for which there exists an \(R\)-submodule \(\mathfrak{b}\subseteq\textup{Q}(R)\) such that \(\mathfrak{a}\cdot \mathfrak{b}=R\). 
We write \(\mathcal{I}(R)\) for the group of invertible ideals of \(R\).
We say a fractional ideal $\mathfrak{a}$ of $R$ is \emph{torsion} if $\mathfrak{a}^\ell=R$ for some $\ell>0$, or equivalently if $\mathfrak{a}$ is in the torsion subgroup of $\mathcal{I}(R)$.
\end{definition}

\begin{lemma}\label{lem:inv_basic}
Each invertible ideal \(\mathfrak{a}\) of \(R\) is a fractional ideal such that \(\mathfrak{a}\cdot(R:\mathfrak{a})=R\) and \(\mathfrak{a}:\mathfrak{a}=R\).
\end{lemma}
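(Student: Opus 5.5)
We fix an invertible ideal \(\mathfrak{a}\) and an \(R\)-submodule \(\mathfrak{b}\subseteq\textup{Q}(R)\) with \(\mathfrak{a}\cdot\mathfrak{b}=R\), and derive the three claims from this one relation.

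First we show \(\mathfrak{a}\) is finitely generated. Since \(1\in R=\mathfrak{a}\cdot\mathfrak{b}\), and \(\mathfrak{a}\cdot\mathfrak{b}\) is generated as an \(R\)-module by products, we can write \(1=\sum_{i=1}^k a_ib_i\) with \(a_i\in\mathfrak{a}\) and \(b_i\in\mathfrak{b}\). For any \(x\in\mathfrak{a}\) we get \(x=\sum_i (xb_i)a_i\), and \(xb_i\in\mathfrak{a}\mathfrak{b}=R\). Hence \(\mathfrak{a}=\sum_i Ra_i\) is finitely generated.

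Next we check \(\textup{Q}(R)\cdot\mathfrak{a}=\textup{Q}(R)\). We have \(1=\sum a_ib_i\) with \(b_i\in\textup{Q}(R)\), so \(1\in\textup{Q}(R)\cdot\mathfrak{a}\). Thus \(\mathfrak{a}\) is a fractional ideal.

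For \(\mathfrak{a}\cdot(R:\mathfrak{a})=R\), note that \(\mathfrak{b}\subseteq R:\mathfrak{a}\) by definition of the colon. Therefore
\[R=\mathfrak{a}\mathfrak{b}\subseteq\mathfrak{a}\cdot(R:\mathfrak{a})\subseteq R,\]
where the last inclusion holds because every product \(a\cdot y\) with \(y\in R:\mathfrak{a}\) lies in \(R\). So equality holds throughout.

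For \(\mathfrak{a}:\mathfrak{a}=R\), the inclusion \(R\subseteq\mathfrak{a}:\mathfrak{a}\) is clear since \(\mathfrak{a}\) is an \(R\)-module. Conversely, let \(x\in\textup{Q}(R)\) with \(x\mathfrak{a}\subseteq\mathfrak{a}\). Then \(x=\sum_i (xa_i)b_i\). Each \(xa_i\) lies in \(\mathfrak{a}\), so each term \((xa_i)b_i\) lies in \(\mathfrak{a}\mathfrak{b}=R\), giving \(x\in R\).

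No step is a real obstacle. The only point needing care is that \(\mathfrak{a}\cdot\mathfrak{b}\) is the module generated by products, so \(1\) is a finite sum of products \(a_ib_i\); every other step is an elementary manipulation of this identity.
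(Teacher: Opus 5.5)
Your proof is correct and takes essentially the same approach as the paper: both start from a decomposition $1=\sum_i a_ib_i$ and use $\mathfrak{b}\subseteq R:\mathfrak{a}$ for the middle claim. The only difference is that you check finite generation and $\mathfrak{a}:\mathfrak{a}\subseteq R$ elementwise from that identity, whereas the paper argues at the level of ideals, via $\mathfrak{a}'\mathfrak{b}=\mathfrak{a}\mathfrak{b}$ and $\mathfrak{a}:\mathfrak{a}\subseteq\mathfrak{ab}:\mathfrak{ab}=R$.
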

\begin{proof}
Suppose \(\mathfrak{a}\) and \(\mathfrak{b}\) are invertible ideals such that \(\mathfrak{a}\cdot \mathfrak{b}=R\).
We may write \(\sum_{k=1}^n a_k b_k=1\) for some \(n\in\Z_{\geq 0}\), \(a_1,\dotsc,a_n\in \mathfrak{a}\) and \(b_1,\dotsc,b_n\in \mathfrak{b}\). Let \(\mathfrak{a}'=\sum_{k=1}^n R a_k\) and note that \(1\in \mathfrak{a}'\cdot \mathfrak{b}\subseteq R\), so \(\mathfrak{a}\cdot \mathfrak{b}=R=\mathfrak{a}'\cdot \mathfrak{b}\), and \(\mathfrak{a}=\mathfrak{a}'\) is finitely generated. 
From \(1\in \mathfrak{b}\cdot \mathfrak{a}\subseteq \textup{Q}(R)\cdot \mathfrak{a}\), we similarly conclude that \(\textup{Q}(R)\cdot \mathfrak{a} = \textup{Q}(R)\). 
By definition we have \(\mathfrak{b}\subseteq R:\mathfrak{a}\), so \(R=\mathfrak{a}\cdot \mathfrak{b}\subseteq \mathfrak{a}\cdot (R:\mathfrak{a})\subseteq R\) and \(\mathfrak{a}\cdot (R:\mathfrak{a})=R\).
Similarly \(R\subseteq \mathfrak{a}:\mathfrak{a} \subseteq \mathfrak{ab}:\mathfrak{ab} = R:R=R\) and \(\mathfrak{a}:\mathfrak{a}=R\).
\end{proof}

\begin{lemma}\label{lem:ideal_as_fraction}
Let \(\mathfrak{a}\) be a fractional \(R\)-ideal such that \(\mathfrak{a}\) and \(R+\mathfrak{a}\) are invertible.
Then \(R+\mathfrak{a}^{-1}\) is invertible and \(\mathfrak{b}=(R+\mathfrak{a}^{-1})^{-1}\) and \(\mathfrak{c}=(R+\mathfrak{a})^{-1}\) satisfy \textup{(1)} \(\mathfrak{b},\mathfrak{c}\subseteq R\); \textup{(2)} \(\mathfrak{b}+\mathfrak{c}=R\) and \textup{(3)} \(\mathfrak{a}=\mathfrak{b}:\mathfrak{c}\).
\end{lemma}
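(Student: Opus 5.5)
The plan is to use the identity \(R+\mathfrak{a}^{-1} = \mathfrak{a}^{-1}(\mathfrak{a}+R)\). Here \(\mathfrak{a}^{-1}=R:\mathfrak{a}\) is the inverse of \(\mathfrak{a}\) in \(\mathcal{I}(R)\), as given by Lemma~\ref{lem:inv_basic}. The identity holds by distributivity of the product of submodules over sums, because \(\mathfrak{a}^{-1}\mathfrak{a}=R\). A product of two invertible ideals is invertible, so \(R+\mathfrak{a}^{-1}\) is invertible. Its inverse is
\[
\mathfrak{b}=(R+\mathfrak{a}^{-1})^{-1}=\mathfrak{a}(R+\mathfrak{a})^{-1}=\mathfrak{a}\mathfrak{c}.
\]
This relation \(\mathfrak{b}=\mathfrak{a}\mathfrak{c}\) will drive the rest of the proof.

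For (1), I would use the following general fact. If \(\mathfrak{d}\) is invertible and \(R\subseteq\mathfrak{d}\), then \(\mathfrak{d}^{-1}=\mathfrak{d}^{-1}R\subseteq\mathfrak{d}^{-1}\mathfrak{d}=R\). Applying this to \(\mathfrak{d}=R+\mathfrak{a}\) and to \(\mathfrak{d}=R+\mathfrak{a}^{-1}\) gives \(\mathfrak{c}\subseteq R\) and \(\mathfrak{b}\subseteq R\).

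For (2), compute
\[
\mathfrak{b}+\mathfrak{c}=\mathfrak{c}\mathfrak{a}+\mathfrak{c}R=\mathfrak{c}(\mathfrak{a}+R)=R,
\]
where the last equality holds because \(\mathfrak{c}\) is by definition the inverse of \(R+\mathfrak{a}\).

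For (3), the quotient is \(\mathfrak{b}:\mathfrak{c}=\mathfrak{a}\mathfrak{c}:\mathfrak{c}\), and I would check directly that this equals \(\mathfrak{a}\). The inclusion \(\mathfrak{a}\subseteq\mathfrak{a}\mathfrak{c}:\mathfrak{c}\) is clear. Conversely, suppose \(x\in\textup{Q}(R)\) satisfies \(x\mathfrak{c}\subseteq\mathfrak{a}\mathfrak{c}\). Multiplying by \(\mathfrak{c}^{-1}=R+\mathfrak{a}\) gives \(xR=x\mathfrak{c}\mathfrak{c}^{-1}\subseteq\mathfrak{a}\mathfrak{c}\mathfrak{c}^{-1}=\mathfrak{a}\), so \(x\in\mathfrak{a}\).

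I expect no real obstacle. The only points needing care are that products of \(R\)-submodules of \(\textup{Q}(R)\) are associative, commutative and distribute over sums, and that \(\mathcal{I}(R)\) is a group under this product, so that inverses are unique and equal to \(R:\mathfrak{d}\) by Lemma~\ref{lem:inv_basic}. These are routine to verify on generators.
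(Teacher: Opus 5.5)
Your proof is correct and takes essentially the paper's route: the identity \(R+\mathfrak{a}^{-1}=\mathfrak{a}^{-1}(R+\mathfrak{a})\) gives \(\mathfrak{b}=\mathfrak{a}\mathfrak{c}\), so (2) follows from \(\mathfrak{a}\mathfrak{c}+\mathfrak{c}=(\mathfrak{a}+R)\mathfrak{c}=R\), and (3) follows by cancelling the invertible ideal \(\mathfrak{c}\). The only cosmetic difference is in (1): the paper reads it off from (2), whereas you prove it separately via \(R\subseteq\mathfrak{d}\Rightarrow\mathfrak{d}^{-1}\subseteq R\).
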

\begin{proof}
Note that \(R+\mathfrak{a}^{-1}=\mathfrak{a}^{-1}(R+\mathfrak{a})\) is invertible.
Rearranging using Lemma~\ref{lem:inv_basic} gives \(\mathfrak{a}=(R+\mathfrak{a}):(R+\mathfrak{a}^{-1})=\mathfrak{b}:\mathfrak{c}\).
We have \(\mathfrak{b}+\mathfrak{c}=\mathfrak{a}\mathfrak{c}+\mathfrak{c}=(\mathfrak{a}+R)\mathfrak{c}=R\).
In particular, we have \(\mathfrak{b},\mathfrak{c}\subseteq R\).
\end{proof}

\begin{proposition}\label{prop:semi_loc_inv}
Suppose \(R\) is semi-local and \(A\subseteq\textup{Q}(R)\) is an \(R\)-submodule. Then \(A\) is an invertible ideal if and only if there exists some \(a\in\textup{Q}(R)^*\) such that \(A=Ra\).
\end{proposition}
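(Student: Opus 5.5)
The reverse implication is immediate. If \(A=Ra\) with \(a\in\textup{Q}(R)^*\), then \(B=Ra^{-1}\) is an \(R\)-submodule of \(\textup{Q}(R)\) with \(A\cdot B=Raa^{-1}=R\), so \(A\) is invertible.

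For the forward implication, suppose \(A\cdot B=R\). By Lemma~\ref{lem:inv_basic} we may take \(B=R:A\), and \(A\) is finitely generated. The plan is to find a single \(a\in A\) and \(b\in B\) with \(ab=1\). Such a pair suffices: \(a\) is then a unit of \(\textup{Q}(R)\) with inverse \(b\), and for any \(x\in A\) we have \(x=(xb)a\) with \(xb\in AB=R\). Hence \(A=Ra\).

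To find such \(a,b\), let \(\fm_1,\dotsc,\fm_r\) be the maximal ideals of \(R\). For each \(i\), since \(AB=R\not\subseteq\fm_i\), the finitely many products \(a_kb_l\) of generators cannot all lie in \(\fm_i\). So there are \(a^{(i)}\in A\) and \(b^{(i)}\in B\) with \(a^{(i)}b^{(i)}\in R\setminus\fm_i\).

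Next apply the Chinese remainder theorem, which holds because the \(\fm_i\) are pairwise comaximal. Choose \(e_i\in R\) with \(e_i\equiv 1 \bmod \fm_i\) and \(e_i\in\fm_j\) for \(j\neq i\). Put \(a=\sum_i e_ia^{(i)}\in A\) and \(b=\sum_i e_ib^{(i)}\in B\). Then
\[ab=\sum_{i,j}e_ie_j\,a^{(i)}b^{(j)},\]
and each \(a^{(i)}b^{(j)}\in AB=R\).

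Modulo \(\fm_k\), every term with \(i\neq k\) or \(j\neq k\) vanishes, because \(e_i\in\fm_k\) or \(e_j\in\fm_k\) and the remaining factor lies in \(R\). The surviving term is \(e_k^2a^{(k)}b^{(k)}\equiv a^{(k)}b^{(k)}\not\equiv 0\). So \(u:=ab\in R\) lies in no maximal ideal, i.e. \(u\in R^*\).

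Replacing \(b\) by \(u^{-1}b\in B\) gives \(ab=1\), which finishes the argument.

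The main obstacle is this CRT-combination step. The point to check carefully is that the cross terms \(a^{(i)}b^{(j)}\) really lie in \(R\), which holds because \(AB=R\), so that reducing them modulo \(\fm_k\) makes sense. Everything else is formal.
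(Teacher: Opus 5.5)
Your proof is correct and follows essentially the same route as the paper: for each maximal ideal you pick $a^{(i)}b^{(i)}\in R\setminus\mathfrak{m}_i$ and glue these with elements lying in every maximal ideal but one (you obtain them from the Chinese remainder theorem, the paper from prime avoidance), so that $ab$ is a unit of $R$. The only difference is cosmetic: you rescale $b$ to get $ab=1$ and write $x=(xb)a$, whereas the paper concludes directly via $aR\subseteq A=abA\subseteq aBA=aR$.
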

\begin{proof}
The if-part is trivial, so suppose \(\mathfrak{a}\) is an invertible ideal with \(\mathfrak{ab}=R\). 
For each \(\fm\in\maxspec R\) we may choose \(a_\fm\in \mathfrak{a}\) and \(b_\fm\in \mathfrak{b}\) such that \(a_\fm b_\fm \in R\setminus \fm\), and by prime avoidance (Lemma~3.3 in \cite{Eisenbud}) choose \(\lambda_\fm\in R\) such that for all \(\fn\in\maxspec R\) we have \(\lambda_\fm \in \fn\) if and only if \(\fm\neq\fn\).
Consider \(a=\sum_\fm \lambda_\fm a_\fm\) and \(b=\sum_\fm \lambda_\fm b_\fm\). Then \(ab=\sum_{\fm,\fn} \lambda_\fm \lambda_{\fn} a_\fm b_{\fn} \in R\).
For all \(\fm\) there is precisely one term not contained in \(\fm\), namely \(\lambda_\fm \lambda_\fm a_\fm b_{\fm}\), hence \(ab\not\in \fm\).
It follows that \(ab\in R^*\), and thus \(a\in\textup{Q}(R)^*\). 
Finally, \(aR \subseteq \mathfrak{a} = ab \mathfrak{a} \subseteq a \mathfrak{ba} = aR\) and \(\mathfrak{a}=Ra\).
\end{proof}

\begin{lemma}\label{lem:semi-local}
Let \(R\subseteq S\) be commutative (sub)rings so that \(S\) is Noetherian as \(R\)-module.
If \(\fm\in\maxspec S\), then \(\fm\cap R\in\maxspec R\). 
If \(R\) is semi-local, then \(S\) is semi-local.
\end{lemma}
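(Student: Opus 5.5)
The plan is to reduce both claims to standard facts about integral extensions, using that \(S\) is a finitely generated \(R\)-module.

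\emph{First claim.} Since \(S\) is Noetherian as an \(R\)-module, it is finitely generated over \(R\), so every \(s\in S\) is integral over \(R\). This follows from the determinant trick (Cayley--Hamilton) applied to multiplication by \(s\) on a finite generating set of \(S\). Now let \(\fm\in\maxspec S\) and put \(\fp=\fm\cap R\). Then \(R/\fp\subseteq S/\fm\) is an integral extension, and \(S/\fm\) is a field. I will show \(R/\fp\) is a field. Take \(0\neq x\in R/\fp\). Then \(x^{-1}\in S/\fm\) satisfies a monic equation
\[x^{-k}+c_{1}x^{-k+1}+\dots+c_k=0\]
with \(c_i\in R/\fp\). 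Multiplying by \(x^{k-1}\) gives
\[x^{-1}=-(c_1+c_2x+\dots+c_kx^{k-1})\in R/\fp.\]
Hence \(\fp\) is maximal.

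\emph{Second claim.} The first claim gives a map \(\maxspec S\to\maxspec R\), \(\fm\mapsto \fm\cap R\), so it suffices to show every fibre is finite. Fix \(\fn\in\maxspec R\). Every \(\fm\) lying over \(\fn\) contains \(\fn S\), so these \(\fm\) correspond to maximal ideals of \(S/\fn S\). The ring \(S/\fn S\) is a finitely generated module over the field \(R/\fn\), hence a finite-dimensional \(R/\fn\)-algebra. It is therefore Artinian and has only finitely many maximal ideals. Indeed, by the Chinese remainder theorem, \(t\) distinct maximal ideals give a surjection onto a product of \(t\) nonzero vector spaces, which forces \(t\le\dim_{R/\fn} S/\fn S\). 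With \(\maxspec R\) finite, \(\maxspec S\) is then a finite union of finite sets.

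I expect no real obstacle. The only point requiring care is the integrality of \(S\) over \(R\), and the determinant trick handles it directly, since \(S\) is a faithful \(R[s]\)-module containing \(1\) and is finitely generated over \(R\).
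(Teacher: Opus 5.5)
Your proof is correct and follows the same approach as the paper. For the first claim the paper cites the standard fact that maximal ideals contract to maximal ideals under an integral extension (Corollary~4.17 in \cite{Eisenbud}), which you prove directly via the determinant trick. For the second claim the paper uses exactly your observation: \(S/\fp S\) is a finite-dimensional \(R/\fp\)-algebra and so has only finitely many maximal ideals.
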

\begin{proof}
The first statement is Corollary~4.17 in \cite{Eisenbud}.
Suppose \(R\) is semi-local. 
If \(\fp\in\maxspec R\), then \(S/\fp S\) is a finite dimensional \(R/\fp\)-vector space. 
Hence \(S/\fp S\) has only finitely many (maximal) ideals.
Each maximal ideal of \(S\) lies over a maximal ideal of \(R\) by the first statement, of which there are only finitely many, so \(S\) is semi-local.
\end{proof}

\begin{lemma}\label{lem:tensor_hom}
Suppose \(A,B\subseteq\textup{Q}(R)\) are \(R\)-submodules such that \(\textup{Q}(R)\cdot A=\textup{Q}(R)\cdot B=\textup{Q}(R)\).
Then the natural maps \(A\tensor_R B\to A\cdot B\) and \(A:B\to\Hom_R(B,A)\) are isomorphisms.
\end{lemma}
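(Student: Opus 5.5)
The plan is to exploit that \(\textup{Q}(R)\) is flat over \(R\), being a localization, and that \(\textup{Q}(R)\) is its own total ring of fractions. The key preliminary observation concerns \(\textup{Q}(R)\cdot A=\textup{Q}(R)\). Since \(1\in\textup{Q}(R)\cdot A\), we can write \(1=\sum_i q_i a_i\) with \(q_i\in\textup{Q}(R)\) and \(a_i\in A\). Clearing a common denominator \(s\), a regular element of \(R\), we obtain an element \(r=\sum_i (sq_i)a_i\in A\) with \(r=s\), which is regular in \(R\) and hence a unit of \(\textup{Q}(R)\). So \(A\) contains a regular element \(s\in R\), and likewise \(B\) contains a regular element \(t\in R\).

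\textbf{First map.} For \(A\tensor_R B\to A\cdot B\), surjectivity is clear from the definition of \(A\cdot B\). For injectivity, consider the commutative square formed with the localizations
\[ A\tensor_R B\to \textup{Q}(R)\tensor_R A\tensor_R B\cong (\textup{Q}(R)A)\tensor_{\textup{Q}(R)}(\textup{Q}(R)B)=\textup{Q}(R)\tensor_{\textup{Q}(R)}\textup{Q}(R)=\textup{Q}(R). \]
Here flatness gives \(\textup{Q}(R)\tensor_R A\cong \textup{Q}(R)A\), since \(A\subseteq \textup{Q}(R)\) and the tensor of an inclusion with a flat module remains injective, while \(\textup{Q}(R)\tensor_R\textup{Q}(R)=\textup{Q}(R)\) because \(\textup{Q}(R)\) is a localization. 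The composite \(A\tensor_R B\to\textup{Q}(R)\) is multiplication, so it suffices to show that the localization map \(A\tensor_R B\to \textup{Q}(R)\tensor_R(A\tensor_R B)\) is injective. This amounts to showing that no nonzero element of \(A\tensor B\) is killed by a regular element \(u\in R\). I would argue as follows. Multiplication by \(s\) gives an injection \(B\to R\), hence \(B\cong sB\subseteq R\), and similarly \(A\cong tA\subseteq R\). The element \(u\) is then regular on \(A\) and \(B\) viewed as submodules of \(\textup{Q}(R)\). To make this rigorous, I would instead note that \(A\tensor_R B\) embeds after tensoring as follows: since \(s\in A\), the map \(B\to A\tensor B\), \(b\mapsto s\tensor b\), becomes an isomorphism after localizing. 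The cleanest route is to show directly that \(x\in A\tensor B\) with \(ux=0\) satisfies \(x=0\) by multiplying by \(s\tensor t\)-type identities. This torsion-freeness is \textbf{the main obstacle}. Concretely, I would use that for \(x=\sum a_i\tensor b_i\) we have \(st\,x=\sum (s a_i)\tensor(t b_i)\) with entries in \(R\cap\) the appropriate modules, and the element \(ta_i\in R\) can be moved across the tensor, giving \(st\,x = s\tensor (\sum a_i t b_i)=s\tensor t\,m(x)\), where \(m\) is multiplication. Hence \(m(x)=0\) implies \(st\,x=0\). Then I would show that multiplication by \(st\) is injective on \(A\tensor B\) by the same trick with \(A\)-side regular elements, or by choosing the representation cleverly. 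If this direct approach stalls, I would fall back on the observation that \(s t x=0\) already gives \(x=0\) after noting \(x = (st)^{-1}\)-rescaling within \(A\tensor B\cong sA'\tensor\dots\). I expect some care to be needed here.

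\textbf{Second map.} For \(A:B\to\Hom_R(B,A)\), \(x\mapsto(b\mapsto xb)\), the argument is easier. Injectivity holds because \(xt=0\) with \(t\) a unit of \(\textup{Q}(R)\) forces \(x=0\). For surjectivity, given \(\varphi\in\Hom_R(B,A)\), set \(x=\varphi(t)/t\in\textup{Q}(R)\). For \(b\in B\), write \(b=c/d\) with \(d\) regular, so that \(db\in R\). Then
\[ t\varphi(b)\,d=\varphi(t\,db)=(db)\varphi(t), \]
using \(R\)-linearity with the scalars \(t, db\in R\). Dividing by the unit \(td\) gives \(\varphi(b)=xb\). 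Consequently \(xB\subseteq A\), so \(x\in A:B\) and \(x\) maps to \(\varphi\).
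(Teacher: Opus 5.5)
\textbf{There is a genuine gap, and it sits in the first map, exactly where you flag it.} Your argument for \(A:B\to\Hom_R(B,A)\) is correct and is the paper's. The paper inverts via \(f\mapsto f(b)/b\) for some \(b\in B\cap\textup{Q}(R)^*\); your regular \(t\in B\cap R\), obtained by clearing denominators in \(1=\sum_i q_ia_i\), serves the same purpose.

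Your reduction to showing that no nonzero element of \(A\tensor_R B\) is killed by a regular \(u\in R\) is fine. None of the proposed manoeuvres proves it, though.
\begin{itemize}
\item The claims \(sB\subseteq R\) and \(ta_i\in R\) fail for arbitrary regular \(s\in A\), \(t\in B\).
\item After clearing denominators properly, your computation does show that every element of the kernel \(C\) of multiplication is killed by some regular element, i.e.\ \(\textup{Q}(R)\tensor_R C=0\).
\item The remaining step is injectivity of multiplication by \(st\) on \(A\tensor_R B\). That is just the torsion-freeness you set out to prove, and the ``\((st)^{-1}\)-rescaling'' presupposes it.
\end{itemize}

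\textbf{No argument can close this step: the first assertion is false as stated, even for orders.} Take \(R=\Z[2\ii]\) and \(A=B=2\Z[\ii]\); this contains \(2\), a unit of \(\textup{Q}(R)=\Q(\ii)\). Let \(z=2\tensor 2\ii-2\ii\tensor 2\).
\begin{itemize}
\item \(z\) maps to \(0\) in \(A\cdot B\).
\item \(2z=0\): moving the scalars \(2,2\ii\in R\) across the tensor, both \(2\cdot(2\tensor 2\ii)\) and \(2\cdot(2\ii\tensor 2)\) equal \(4\ii\tensor 2\).
\item \(z\neq 0\): let \(R\) act on \(\Z/2\Z\) through \(R/2\Z[\ii]\cong\Z/2\Z\). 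The map \(A\times A\to\Z/2\Z\), \((2c+2d\ii,\,2c'+2d'\ii)\mapsto cd'\bmod 2\), is \(R\)-bilinear, because \((a+2b\ii)(2c+2d\ii)\) has coordinates \(\equiv(ac,ad)\bmod 2\). It sends \(z\) to \(1\).
\end{itemize}
The same happens for \(A=B=(x,y)\) in \(k[x,y]\). So \(\textup{Q}(R)\tensor_R C=0\) does not give \(C=0\). The paper's own proof makes precisely this inference (``Hence \(\textup{Q}(R)\tensor C=0\). It follows that \(C=0\)''), so it has the same hole.

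\textbf{What is true.} The statement holds under a flatness hypothesis. If \(A\) is flat (e.g.\ invertible), then \(A\tensor_R B\hookrightarrow A\tensor_R\textup{Q}(R)\cong\textup{Q}(R)\), and the composite is multiplication. As far as I can see, only the \(\Hom\) half of the lemma is used later in the paper (in Lemma~\ref{lem:arith_loc}), and that half your proof does establish.
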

\begin{proof}
By flatness of \(\textup{Q}(R)\) the natural map \(\textup{Q}(R)\tensor D\to\textup{Q}(R)\) is an isomorphism for all \(R\)-submodules \(D\subseteq\textup{Q}(R)\) such that \(\textup{Q}(R)\cdot D = \textup{Q}(R)\). 
Any exact sequence \(0\to C\to A\tensor B \to A\cdot B \to 0\) where the third map is multiplication induces an exact sequence \(0\to\textup{Q}(R)\tensor C \to \textup{Q}(R) \to \textup{Q}(R)\to 0\) where the third map is the identity.
Hence \(\textup{Q}(R)\tensor C=0\). It follows that \(C=0\) and thus \(A\tensor B\to A \cdot B\) is an isomorphism.

As \(\textup{Q}(R)\cdot B=\textup{Q}(R)\), we may choose some \(b\in B\cap \textup{Q}(R)^*\). The inverse to \(A:B\to\Hom_R(B,A)\) is then given by \(f\mapsto f(b)/b\).
\end{proof}

For an \(R\)-module \(M\) and prime ideal \(\fp\subset R\) we write \(M_\fp\) for the localization of \(M\) at \(\fp\).

\begin{lemma}\label{lem:localize_fractional_ideal}
Let \(\fp\subset R\) be a prime ideal. We have a natural morphism \(\textup{Q}(R)\to\textup{Q}(R_\fp)\) of \(R\)-algebras that induces an isomorphism \(\textup{Q}(R)_\fp \to \textup{Q}(R_\fp)\) of \(R_\fp\)-algebras.
\end{lemma}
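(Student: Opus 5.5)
The plan is to build the map from the universal properties of localization, and then prove bijectivity by comparing which elements get inverted on each side.

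\emph{Construction.} Let \(\iota:R\to R_\fp\) be the localization map. To get \(\textup{Q}(R)\to\textup{Q}(R_\fp)\) I need \(\iota\) to send every regular element \(s\) of \(R\) to a regular element of \(R_\fp\). Localization is flat, so multiplication by \(s/1\) on \(R_\fp\) is the localization of the injective map \(R\xrightarrow{s}R\), hence injective. Thus \(s/1\) is regular in \(R_\fp\) and hence a unit in \(\textup{Q}(R_\fp)\). The universal property of \(\textup{Q}(R)=S^{-1}R\), with \(S\) the set of regular elements, then gives a unique \(R\)-algebra map \(\phi:\textup{Q}(R)\to\textup{Q}(R_\fp)\).

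Elements of \(R\setminus\fp\) map to units of \(R_\fp\), hence to units of \(\textup{Q}(R_\fp)\). So \(\phi\) factors through \(\textup{Q}(R)_\fp\), giving an \(R_\fp\)-algebra map \(\psi:\textup{Q}(R)_\fp\to\textup{Q}(R_\fp)\).

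\emph{Bijectivity.} Write \(\textup{Q}(R)_\fp=T^{-1}R\) with \(T=S\cdot(R\setminus\fp)\); equivalently it is \(S'^{-1}R_\fp\), where \(S'\) is the image of \(S\) in \(R_\fp\). Also \(\textup{Q}(R_\fp)=U^{-1}R_\fp\), where \(U\) is the set of regular elements of \(R_\fp\). The first step showed \(S'\subseteq U\). So \(\psi\) is the natural map \(S'^{-1}R_\fp\to U^{-1}R_\fp\), and it suffices to show that each \(u\in U\) becomes a unit in \(S'^{-1}R_\fp\), since then the two localizations coincide.

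\emph{Main obstacle.} This last step is where I expect difficulty: in general a regular element of \(R_\fp\) need not be of the form (unit)\(\cdot\)(image of a regular element of \(R\)). Write \(u=r/t\) with \(t\notin\fp\); then \(r\) is regular in \(R_\fp\), and I must show \(r\) is a unit in \(\textup{Q}(R)_\fp\).

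Injectivity of \(\psi\) is easy. Kernels of localization maps consist of elements killed by a denominator, and \(S'\subseteq U\) consists of non-zero-divisors of \(R_\fp\), so the map \(S'^{-1}R_\fp\to U^{-1}R_\fp\) is injective.

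For surjectivity I would first try to use the hypotheses available where the lemma is applied, namely \(R\) Noetherian. Then the regular elements of \(R\) are exactly those avoiding the finitely many associated primes \(\mathfrak q_1,\dots,\mathfrak q_k\). Moreover \(\textup{Q}(R)\) is the semilocal ring obtained by inverting \(R\setminus\bigcup\mathfrak q_i\), so \(\textup{Q}(R)_\fp\) is \(R\) localized at the complement of \(\bigcup_{\mathfrak q_i\subseteq\fp}\mathfrak q_i\). The associated primes of \(R_\fp\) are the \(\mathfrak q_iR_\fp\) with \(\mathfrak q_i\subseteq\fp\), so \(\textup{Q}(R_\fp)\) is the same localization and \(\psi\) is an isomorphism.

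I would present the argument in this generality, noting that the Noetherian (or finitely many minimal/associated primes) assumption is what makes surjectivity work. Without it I would look for a counterexample rather than a proof.
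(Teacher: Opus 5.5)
Your construction of \(\psi\) and your injectivity argument are correct, and you isolate exactly the right difficulty. It is the same point the paper's proof passes over: its proposed inverse \((a/b)/(c/d)\mapsto (ad/c)/b\) needs \(c\) to be regular in \(R\), and regularity of \(c/d\) in \(R_\fp\) does not give that. But your Noetherian repair has a genuine gap at the step ``so \(\textup{Q}(R)_\fp\) is \(R\) localized at the complement of \(\bigcup_{\mathfrak q_i\subseteq\fp}\mathfrak q_i\)''. Localizing \(\textup{Q}(R)\) at \(\fp\) inverts \(T=(R\setminus\bigcup_i\mathfrak q_i)\cdot(R\setminus\fp)\). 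Its saturation is the complement of the union of all primes \(Q\) with \(Q\subseteq\fp\) and \(Q\subseteq\mathfrak q_i\) for some \(i\). Such a \(Q\) can lie in an embedded associated prime \(\mathfrak q_i\not\subseteq\fp\) without lying in any associated prime contained in \(\fp\).

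Concretely, let \(R=k[x,y,z]/(x^2,xy,xz)\), a Noetherian ring of Krull dimension \(2\), and let \(\fp=(x,y)\). From \((x^2,xy,xz)=(x)\cap(x^2,y,z)\), the associated primes are \((x)\) and \((x,y,z)\). So the zero-divisors form \((x,y,z)\supseteq\fp\), whence \(\textup{Q}(R)=R_{(x,y,z)}\) and \(\textup{Q}(R)_\fp=R_\fp\). In \(R_\fp\) the element \(z\) is a unit and \(xz=0\), so \(R_\fp\cong k[y,z]_{(y)}\), a discrete valuation ring, while \(\textup{Q}(R_\fp)=k(y,z)\). Thus \(y\) is regular in \(R_\fp\) but not a unit in \(\textup{Q}(R)_\fp\). (The paper's formula would send \((1/1)/(y/1)\) to \((1/y)/1\), which does not exist since \(xy=0\).) So the lemma is false already for Noetherian rings. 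No argument can close this step under a Noetherian or finitely-many-associated-primes hypothesis: your suspicion about the general case is right, but the proposed repair is not.

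What is true is the statement for Noetherian \(R\) with \(\dim R\le 1\), or with no embedded associated primes (equivalently, \(\textup{Q}(R)\) Artinian). This suffices for every use of the lemma in the paper, which only concerns orders and Noetherian rings of Krull dimension at most \(1\). There your argument goes through once you add the missing step. Let \(Q\) be a prime with \(Q\subseteq\fp\) and \(Q\subseteq\mathfrak q_i\). If \(Q\) is minimal, it is itself an associated prime contained in \(\fp\). Otherwise \(\dim R\le 1\) forces \(Q\) to be maximal, so \(Q=\fp=\mathfrak q_i\). In the case without embedded primes, \(Q\subseteq\mathfrak q_i\) with \(\mathfrak q_i\) minimal gives \(Q=\mathfrak q_i\subseteq\fp\). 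Hence the saturation of \(T\) is \(R\setminus\bigcup_{\mathfrak q_j\subseteq\fp}\mathfrak q_j\), and your comparison with \(\textup{Q}(R_\fp)\) is valid. Note also that merely regarding \(A_\fp\) as an \(R_\fp\)-submodule of \(\textup{Q}(R_\fp)\), as the paper does right after the lemma, only needs the injectivity you proved in complete generality.
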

\begin{proof}
Localization preserves injectivity, hence the image of each regular element of \(R\) in \(R_\p\) is regular.
Consequently, the natural map \(\textup{Q}(R)\to\textup{Q}(R_\fp)\) is well-defined. One verifies that the natural map \(\textup{Q}(R)_\fp\to\textup{Q}(R_\fp)\) given by \((a/c)/b\mapsto (a/b)/(c/1)\) has inverse \((a/b)/(c/d)\mapsto (ad/c)/b\).
\end{proof}

By Lemma~\ref{lem:localize_fractional_ideal} we may for each \(R\)-submodule \(A\subseteq\textup{Q}(R)\) interpret \(A_\fp\) as an \(R_\fp\)-submodule of \(\textup{Q}(R_\fp)\).
From the following lemma we conclude that localization maps fractional ideals to fractional ideals and invertible ideals to invertible ideals.

\begin{lemma}\label{lem:arith_loc}
Suppose \(A,B\subseteq\textup{Q}(R)\) are \(R\)-submodules and \(\fp\subset R\) is prime. Then \((A+B)_\fp=A_\fp+B_\fp\) and \((A\cdot B)_\fp=A_\fp\cdot B_\fp\) hold. If \(A\) and \(B\) are finitely presented, then \((A:B)_\fp=A_\fp:B_\fp\) holds.
\end{lemma}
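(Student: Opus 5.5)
We interpret \(A_\fp\) inside \(\textup{Q}(R)_\fp\cong\textup{Q}(R_\fp)\) via Lemma~\ref{lem:localize_fractional_ideal}, so that \(A_\fp\) is identified with the \(R_\fp\)-submodule \(\{a/s : a\in A,\ s\in R\setminus\fp\}\). Localization is exact, so \(A_\fp\to\textup{Q}(R)_\fp\) remains injective and this identification is legitimate. The plan is to verify each of the three equalities by comparing generators.

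For sums and products, the containments \(\supseteq\) are immediate. An element \(a/s+b/t\) with \(a\in A\), \(b\in B\) equals \((ta+sb)/(st)\), which lies in \((A+B)_\fp\). Likewise \((a/s)(b/t)=ab/(st)\in(A\cdot B)_\fp\), and finite sums of such products stay in \((A\cdot B)_\fp\) because it is an \(R_\fp\)-module. Conversely, \(A+B\) consists of elements \(a+b\) and \(A\cdot B\) of finite sums \(\sum a_ib_i\). Dividing by \(s\notin\fp\) writes \((a+b)/s=a/s+b/1\) and \(\sum a_ib_i/s=\sum (a_i/s)(b_i/1)\), which gives \(\subseteq\).

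For colons I would use the description \(A:B\cong\Hom_R(B,A)\) as a guide, but argue directly to avoid needing \(\textup{Q}(R)\cdot B=\textup{Q}(R)\). The inclusion \((A:B)_\fp\subseteq A_\fp:B_\fp\) is easy: if \(xB\subseteq A\) and \(s\notin\fp\), then \((x/s)(b/t)=xb/(st)\in A_\fp\). For the reverse, take \(y\in A_\fp:B_\fp\subseteq\textup{Q}(R)_\fp\) and write \(y=x/u\) with \(x\in\textup{Q}(R)\) and \(u\notin\fp\). Let \(b_1,\dots,b_n\) generate \(B\). Then \(xb_i/u\in A_\fp\), so there are \(t_i\notin\fp\) with \(t_ixb_i\in A\). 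Note that equality in \(\textup{Q}(R)_\fp\) only gives \(v_i(t_ixb_i-a_i)=0\) for some \(v_i\notin\fp\), which we absorb into \(t_i\). Setting \(t=\prod t_i\), we get \(txB\subseteq A\), so \(tx\in A:B\) and \(y=tx/(tu)\in(A:B)_\fp\).

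This uses only that \(B\) is finitely generated, not that \(A\) is finitely presented. The main care point is the bookkeeping of the identification with \(\textup{Q}(R_\fp)\) and the annihilating factors \(v_i\). The stronger hypothesis of finite presentation would matter only if one wanted \(A:B\) computed as \(\Hom_R(B,A)\) inside an abstract localization, where \(\Hom\) commutes with localization for finitely presented \(B\) (Proposition~2.10 in \cite{Eisenbud}). I will therefore present the direct generator argument, and I expect no real obstacle.
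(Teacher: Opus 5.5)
Your proof is correct, and for the colon identity it takes a genuinely different route from the paper.

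The paper treats sums and products as trivial. For colons, it identifies \(A:B\) with \(\Hom_R(B,A)\) via Lemma~\ref{lem:tensor_hom}. It then uses that \(\Hom\) out of a finitely presented module commutes with localization (Proposition~2.10 in \cite{Eisenbud}), and identifies \(\Hom_{R_\fp}(B_\fp,A_\fp)\) back with \(A_\fp:B_\fp\).

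That route is short and conceptual, but Lemma~\ref{lem:tensor_hom} assumes \(\textup{Q}(R)\cdot A=\textup{Q}(R)\cdot B=\textup{Q}(R)\), which is not in the statement. Without it the identification fails: for \(B=0\) one has \(A:B=\textup{Q}(R)\) while \(\Hom_R(B,A)=0\). So the paper's argument really covers only the fractional-ideal case, which suffices where the lemma is applied.

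Your element-wise argument proves the lemma exactly as stated. You clear denominators for a finite generating set \(b_1,\dots,b_n\) of \(B\), absorb the annihilating factors \(v_i\), and take \(t=\prod t_i\). This needs only that \(B\) be finitely generated and imposes nothing on \(A\).

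There are two harmless slips:
\begin{enumerate}
\item \((a+b)/s\) equals \(a/s+b/s\), not \(a/s+b/1\).
\item \(xb_i/u=a_i/t_i\) yields \(v_i(t_ixb_i-ua_i)=0\) rather than \(v_i(t_ixb_i-a_i)=0\); this is fine since \(ua_i\in A\).
\end{enumerate}
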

\begin{proof}
The only non-trivial assertion is the last, where we combine Lemma~\ref{lem:tensor_hom} with the fact that \(\Hom\) commutes with localization when \(A\) and \(B\) are finitely presented (Proposition~2.10 in \cite{Eisenbud}).
\end{proof}

\begin{lemma}\label{lem:inv_loc_prop}
Suppose \(A\subseteq\textup{Q}(R)\) is a finitely presented \(R\)-submodule. Then \(A\) is an invertible ideal of \(R\) if and only if \(A_\fm\) is an invertible ideal of \(R_\m\) for each \(\fm\in\maxspec R\).
\end{lemma}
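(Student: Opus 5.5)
The plan is to prove the two implications separately, using Lemma~\ref{lem:arith_loc} in both directions and the candidate inverse \(R:A\) throughout.

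For the forward direction, suppose \(A\cdot\mathfrak{b}=R\). By Lemma~\ref{lem:arith_loc}, \(A_\fm\cdot\mathfrak{b}_\fm=(A\cdot\mathfrak{b})_\fm=R_\fm\). Via Lemma~\ref{lem:localize_fractional_ideal}, both \(A_\fm\) and \(\mathfrak{b}_\fm\) are \(R_\fm\)-submodules of \(\textup{Q}(R_\fm)\). Hence \(A_\fm\) is invertible. Finite presentation is not needed here.

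For the converse, suppose every \(A_\fm\) is invertible. Put \(\mathfrak{b}=R:A\), so that \(A\cdot\mathfrak{b}\subseteq R\), and aim to show \(A\cdot\mathfrak{b}=R\). It suffices to check \((A\cdot\mathfrak{b})_\fm=R_\fm\) for every maximal ideal \(\fm\), since an \(R\)-submodule of \(R\) that is locally everything is all of \(R\). By Lemma~\ref{lem:arith_loc}, \((A\cdot\mathfrak{b})_\fm=A_\fm\cdot(R:A)_\fm\). We want to rewrite \((R:A)_\fm\) as \(R_\fm:A_\fm\). Lemma~\ref{lem:arith_loc} permits this when \(A\) and \(R\) are finitely presented; \(R\) clearly is, and \(A\) is by hypothesis.

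There is one hidden hypothesis. Lemma~\ref{lem:arith_loc} goes through Lemma~\ref{lem:tensor_hom}, which needs \(\textup{Q}(R)\cdot A=\textup{Q}(R)\). This will be the main point to handle. Since \(A_\fm\) is invertible, Lemma~\ref{lem:inv_basic} shows \(A_\fm\) is a fractional ideal, so \(\textup{Q}(R_\fm)A_\fm=\textup{Q}(R_\fm)\). By Lemma~\ref{lem:localize_fractional_ideal}, \(\textup{Q}(R_\fm)=\textup{Q}(R)_\fm\). Consider the \(R\)-submodule \(\textup{Q}(R)\cdot A\subseteq\textup{Q}(R)\). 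Its localization at every \(\fm\) is \(\textup{Q}(R)_\fm\), so the quotient \(\textup{Q}(R)/\textup{Q}(R)A\) vanishes locally everywhere and is therefore zero. Thus \(\textup{Q}(R)\cdot A=\textup{Q}(R)\), and Lemma~\ref{lem:tensor_hom} and Lemma~\ref{lem:arith_loc} apply. If one prefers not to trace through the proofs, one can instead verify the Hom-localization step directly: by Lemma~\ref{lem:tensor_hom}, \(R:A\cong\Hom_R(A,R)\), and \(\Hom\) from a finitely presented module commutes with localization.

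With this in place, \((A\cdot\mathfrak{b})_\fm=A_\fm\cdot(R_\fm:A_\fm)=R_\fm\) by Lemma~\ref{lem:inv_basic} applied to the invertible ideal \(A_\fm\) of \(R_\fm\). Hence \(A\cdot(R:A)=R\), and \(A\) is invertible.
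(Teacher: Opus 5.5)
Your proof is correct and follows the paper's argument: localize \(A\cdot(R:A)\), use Lemma~\ref{lem:arith_loc} to identify \((R:A)_\fm\) with \(R_\fm:A_\fm\), and conclude with Lemma~\ref{lem:inv_basic} and the local-to-global principle. Your extra check that \(\textup{Q}(R)\cdot A=\textup{Q}(R)\) is valid and is a welcome addition: it is needed for Lemma~\ref{lem:tensor_hom} inside the proof of Lemma~\ref{lem:arith_loc}, and the paper leaves it implicit.
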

\begin{proof}
The only-if-part follows from Lemma~\ref{lem:arith_loc}, so suppose \(A_\fm\) is an invertible ideal for each \(\fm\in\maxspec R\).
We have \((A\cdot (R:A))_\fm=A_\fm\cdot(R_\fm:A_\fm)=A_\fm\) for each \(\fm\in\maxspec R\) by Lemma~\ref{lem:inv_basic} and Lemma~\ref{lem:arith_loc}, so the natural map \(A\cdot (R:A)\to R\) is locally, hence globally, an isomorphism. 
Thus \(A\) is an invertible ideal.
\end{proof}

Lemma~\ref{lem:inv_loc_prop} shows that, for Noetherian \(R\), invertibility is a local property. We will show in Proposition~\ref{prop:invertibility_defined_locally} that under mild hypotheses the local components are completely independent.

\begin{definition}
Let \(M\) be an \(R\)-module.
We write \(\ell_R(M)\) for the \emph{length} of \(M\) as \(R\)-module. 
If \(M\) has finite length we write \([M]_R\) for the map \(\maxspec R\to \Z_{\geq0}\) given by \(\mathfrak{m}\mapsto \ell_{R_\mathfrak{m}}(M_\mathfrak{m})\).
\end{definition}

\begin{lemma}[Theorem 2.13 in \cite{Eisenbud}]\label{lem:local_length}
Let \(R\) be a commutative ring and \(N\subseteq M\) be \(R\)-modules.
Then \(\ell_R(M)=\ell_R(N)+\ell_R(M/N)\) and \(\ell_R(M)=\sum_{\mathfrak{m}\in \maxspec R} \ell_{R_\mathfrak{m}}(M_\mathfrak{m})\). \qed
\end{lemma}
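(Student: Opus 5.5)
The lemma is cited from Eisenbud (Theorem~2.13), so the plan is to reconstruct the standard argument rather than build anything new. Throughout, length is read in \(\Z_{\geq0}\cup\{\infty\}\).

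\textbf{Additivity.} First I would prove \(\ell_R(M)=\ell_R(N)+\ell_R(M/N)\).

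Suppose \(M\) has finite length. Any composition series of \(M\) meets \(N\) in a chain of submodules. By Jordan--Hölder and refinement, this shows that \(N\) and \(M/N\) both have finite length. Conversely, a composition series of \(N\) followed by the preimage in \(M\) of a composition series of \(M/N\) is a composition series of \(M\). Its length is \(\ell_R(N)+\ell_R(M/N)\).

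If \(N\) and \(M/N\) have finite length, the same concatenation shows \(M\) does. If \(M\) has infinite length, then one of \(N\) or \(M/N\) must have infinite length, since otherwise the concatenation would bound \(\ell_R(M)\). Hence both sides are infinite. The finiteness of lengths is the only real subtlety here, and it is handled by the Jordan--Hölder theorem.

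\textbf{Localization.} Next I would prove the sum formula \(\ell_R(M)=\sum_{\fm}\ell_{R_\fm}(M_\fm)\). Localization at \(\fm\) is exact, and a simple module \(R/\fn\) localizes to \(0\) if \(\fn\neq\fm\). If \(\fn=\fm\), it localizes to \(R_\fm/\fm R_\fm\), which is simple over \(R_\fm\).

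So take a composition series of \(M\) with factors \(R/\fn_i\). Localizing at \(\fm\) and dropping repeated terms gives a composition series of \(M_\fm\). The number of its factors is \(\#\{i:\fn_i=\fm\}\). Summing over \(\fm\) recovers the total length.

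For \(M\) of infinite length I would reduce to the finite case by approximation. A long chain \(0=M_0\subsetneq\dots\subsetneq M_k\) in \(M\) can be refined so that each successive quotient is still nonzero. Applying the finite-length argument to these finite-length pieces shows that \(\sum_\fm\ell(M_\fm)\geq k\). Note that \(M_\fm\) contains \((M_k)_\fm\) by exactness. Conversely, if \(\sum_\fm\ell(M_\fm)\) is infinite, then \(M\) cannot have finite length, by the finite case.

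The step I expect to need care is this infinite case, specifically extracting finite-length subquotients. Refining a chain with nonzero quotients yields simple subquotients \(M_i/M_{i-1}\) only after passing to finitely generated pieces and choosing maximal submodules. Rather than grind through this, it is cleaner to state the infinite case simply as: "both sides are infinite or both finite". That reduces everything to the finite case, which suffices for this paper's uses, since length is applied only to finite-length modules.
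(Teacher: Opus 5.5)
Your additivity argument is complete, including the infinite cases. Your proof of the localization formula for modules of finite length is also correct. This is the standard argument behind the citation to Eisenbud; the paper gives no proof of its own.

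The gap is in the infinite-length case of the second formula. The lemma as stated imposes no finiteness hypothesis, so it covers this case. You establish \(\ell_R(M)<\infty\Rightarrow\sum_{\mathfrak{m}}\ell_{R_\mathfrak{m}}(M_\mathfrak{m})=\ell_R(M)\), and its contrapositive. You do not prove the other direction, that \(\ell_R(M)=\infty\) forces \(\sum_{\mathfrak{m}}\ell_{R_\mathfrak{m}}(M_\mathfrak{m})=\infty\). The members \(M_i\) of a long chain need not have finite length: for \(R=M=\Z\) the only submodule of finite length is \(0\). So ``applying the finite-length argument to these finite-length pieces'' has nothing to apply to. Your fallback, declaring ``both sides are infinite or both finite'', merely restates this claim.

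The repair needs no finite-length pieces.
\begin{enumerate}
\item If \(\ell_R(M)=\infty\), then \(M\) has strict chains \(0=M_0\subsetneq\dots\subsetneq M_k\) for every \(k\). Indeed, if chain lengths were bounded, a strict chain of maximal length would be a composition series.
\item Each quotient \(M_i/M_{i-1}\) is nonzero, and nonvanishing is detected at some maximal ideal. By exactness of localization there is an \(\mathfrak{m}\) with \((M_{i-1})_\mathfrak{m}\subsetneq(M_i)_\mathfrak{m}\). So the localized chains have at least \(k\) strict steps in total.
\item By the additivity you already proved, \(\ell_{R_\mathfrak{m}}(M_\mathfrak{m})\) is at least the number of strict steps of any chain in \(M_\mathfrak{m}\).
\end{enumerate}
Hence \(\sum_{\mathfrak{m}}\ell_{R_\mathfrak{m}}(M_\mathfrak{m})\geq k\) for every \(k\).

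You are right that this does not affect the rest of the paper. The lemma is only ever applied to modules of finite length, and the notation \([M]_R\) is only defined for those. As a proof of the lemma as stated, however, your argument is incomplete without this step.
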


\begin{lemma}\label{lem:artinian_quotient}
Suppose \(R\) is Noetherian of Krull dimension at most \(1\). If \(\mathfrak{a}\subseteq R\) is a fractional ideal, then \(R/\mathfrak{a}\) has finite length.
\end{lemma}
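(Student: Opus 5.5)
Since \(R/\mathfrak{a}\) is a finitely generated \(R\)-module over a Noetherian ring, it is Noetherian. It therefore suffices to show that \(R/\mathfrak{a}\) is also Artinian, or equivalently that every prime of \(R\) containing \(\mathfrak{a}\) is maximal. Indeed, a Noetherian module whose support consists only of maximal ideals has finite length: take a prime filtration with successive quotients \(R/\fp_i\) and \(\fp_i\supseteq \operatorname{Ann}(R/\mathfrak{a})\supseteq\mathfrak{a}\); each \(R/\fp_i\) is then a field, so the filtration is a composition series. Alternatively, \(R/\mathfrak{a}\) is a Noetherian ring of dimension \(0\), hence an Artinian ring, and it is finitely generated as a module over itself.

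The key step is to show that \(\mathfrak{a}\) contains a regular element of \(R\). Since \(\mathfrak{a}\) is a fractional ideal, \(\textup{Q}(R)\cdot\mathfrak{a}=\textup{Q}(R)\), so \(1=\sum_i q_i a_i\) with \(q_i\in\textup{Q}(R)\) and \(a_i\in\mathfrak{a}\). Clear denominators by a common regular \(s\in R\), so \(s q_i\in R\). Then \(s=\sum_i (sq_i)a_i\in\mathfrak{a}\), because \(\mathfrak{a}\subseteq R\) is an ideal. Thus \(s\in\mathfrak{a}\) is regular.

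Now let \(\fp\supseteq\mathfrak{a}\) be prime. If \(\fp\) were not maximal, then since \(\dim R\leq 1\) the ideal \(\fp\) would be a minimal prime of \(R\). In a Noetherian ring every minimal prime is an associated prime, hence consists of zero-divisors. This contradicts \(s\in\fp\). So every prime containing \(\mathfrak{a}\) is maximal, which gives \(\dim R/\mathfrak{a}=0\), and \(R/\mathfrak{a}\) has finite length.

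The main point needing care is the fact that minimal primes consist of zero-divisors. This is standard for Noetherian rings, since \(\operatorname{Ass}(R)\) is finite, contains the minimal primes, and its union is the set of zero-divisors; for instance, see Theorem~3.1 in \cite{Eisenbud}. The other point is using the hypothesis \(\mathfrak{a}\subseteq R\) to place \(s\) inside \(\mathfrak{a}\).
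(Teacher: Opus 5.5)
Your proof is correct and follows essentially the same route as the paper. Both find a regular element in $\mathfrak{a}$, use Theorem~3.1 of \cite{Eisenbud} to rule out minimal primes containing $\mathfrak{a}$, deduce $\dim R/\mathfrak{a}=0$, and conclude that a zero-dimensional Noetherian ring has finite length. The one difference is that you spell out the clearing-of-denominators step the paper leaves implicit; for that step, $\mathfrak{a}$ being an $R$-submodule of $\textup{Q}(R)$ already suffices.
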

\begin{proof}
Let \(\overline{\p}_1\subset\dotsm\subset\overline{\p}_n\) be a chain of prime ideals of \(R/\mathfrak{a}\) of maximal length.
Its pullback in \(R\) is a chain of prime ideals \(\p_1\subset\dotsm\subset\p_n\).
From $Q(R)\cdot\mathfrak{a}=Q(R)$ it follows that $\mathfrak{a}$, and hence $\p_1$, contains a regular element of $R$.
Hence \(\p_1\) is not a minimal prime (Theorem~3.1 in \cite{Eisenbud}).
We conclude that the Krull dimension of \(R/\mathfrak{a}\) is strictly less than that of \(R\).
Hence \(R/\mathfrak{a}\) has finite length (Corollary 9.1 in \cite{Eisenbud}).
\end{proof}

\begin{lemma}\label{lem:quotient_length}
Let \(\mathfrak{a}\subseteq\mathfrak{b}\) be fractional ideals of \(R\).
Then there exists some regular \(r\in R\) such that \(r\mathfrak{b}\subseteq\mathfrak{a}\).
If \(R\) is Noetherian of Krull dimension at most \(1\), then \(\ell_R(\mathfrak{b}/\mathfrak{a})<\infty\).
\end{lemma}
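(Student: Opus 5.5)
For the first assertion I will use only that \(\mathfrak{b}\) is finitely generated and that \(\textup{Q}(R)\cdot\mathfrak{a}=\textup{Q}(R)\). Since \(1\in\textup{Q}(R)=\textup{Q}(R)\cdot\mathfrak{a}\), we can write \(1=\sum_i q_i a_i\) with \(q_i\in\textup{Q}(R)\) and \(a_i\in\mathfrak{a}\). Let \(b_1,\dotsc,b_m\) generate \(\mathfrak{b}\) as an \(R\)-module. Each product \(b_jq_i\) lies in \(\textup{Q}(R)\), so it has the form \(x_{ij}/s_{ij}\) with \(x_{ij}\in R\) and \(s_{ij}\) a regular element of \(R\). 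Put \(r=\prod_{i,j}s_{ij}\); as a product of regular elements it is regular. Then \(rb_j=\sum_i (rb_jq_i)a_i\), and every coefficient \(rb_jq_i\) lies in \(R\). Hence \(rb_j\in\mathfrak{a}\) for all \(j\), which gives \(r\mathfrak{b}\subseteq\mathfrak{a}\).

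For the second assertion, assume \(R\) is Noetherian of Krull dimension at most \(1\). Then \(r\mathfrak{b}\subseteq\mathfrak{a}\subseteq\mathfrak{b}\), so \(\mathfrak{b}/\mathfrak{a}\) is a quotient of \(\mathfrak{b}/r\mathfrak{b}\). It therefore suffices to show \(\ell_R(\mathfrak{b}/r\mathfrak{b})<\infty\).

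To do this I choose a regular \(s\in R\) with \(s\mathfrak{b}\subseteq R\), obtained by clearing denominators of the finitely many generators of \(\mathfrak{b}\). Multiplication by the unit \(s\) of \(\textup{Q}(R)\) gives an \(R\)-module isomorphism \(\mathfrak{b}/r\mathfrak{b}\cong s\mathfrak{b}/rs\mathfrak{b}\). Now \(rs\mathfrak{b}\subseteq s\mathfrak{b}\subseteq R\). Moreover \(rs\mathfrak{b}\) is a fractional ideal, since it is finitely generated and \(\textup{Q}(R)\cdot rs\mathfrak{b}=\textup{Q}(R)\) because \(rs\) is a unit of \(\textup{Q}(R)\). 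By Lemma~\ref{lem:artinian_quotient}, \(R/rs\mathfrak{b}\) has finite length. The module \(s\mathfrak{b}/rs\mathfrak{b}\) is a submodule of \(R/rs\mathfrak{b}\), so Lemma~\ref{lem:local_length} shows it also has finite length. Applying Lemma~\ref{lem:local_length} once more to the quotient \(\mathfrak{b}/\mathfrak{a}\) of \(\mathfrak{b}/r\mathfrak{b}\) gives \(\ell_R(\mathfrak{b}/\mathfrak{a})<\infty\).

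The only delicate point is the reduction to ideals contained in \(R\). Lemma~\ref{lem:artinian_quotient} applies only to fractional ideals inside \(R\), which is why the rescaling by \(s\) is needed. The rest is routine: checking that the rescaled module is still a fractional ideal, and that a product of regular elements is regular.
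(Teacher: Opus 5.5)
Your proof is correct and follows essentially the same route as the paper: clear denominators to produce a regular $r$ with $r\mathfrak{b}\subseteq\mathfrak{a}$, then realize $\mathfrak{b}/\mathfrak{a}$ as a subquotient of $R/I$ for a fractional ideal $I\subseteq R$ and apply Lemma~\ref{lem:artinian_quotient}. The differences are cosmetic. The paper takes $r=xy$, with $y$ clearing the denominators of $\mathfrak{b}$ and $x\in\mathfrak{a}\cap R$ regular, and works with the principal ideal $xyR$. You instead clear the denominators of the products $b_jq_i$ directly, which makes explicit why $\mathfrak{a}$ contains a regular element, and you work with $rs\mathfrak{b}$.
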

\begin{proof}
We may choose generators \(r_1/s_1,\dotsc,r_n/s_n\in\textup{Q}(R)\) of \(\mathfrak{b}\).
Then \(y=\prod_i s_i\) is regular and satisfies \(y\mathfrak{b}\subseteq R\). 
Since \(\mathfrak{a}\) is fractional there exists some regular \(x\in R\cap\mathfrak{a}\).
Hence we may take \(r=xy\).
We have a surjection \(\mathfrak{b}/xR\to\mathfrak{b}/\mathfrak{a}\), so it suffices to show \(\ell_R(\mathfrak{b}/xR)<\infty\).
The injection \(\mathfrak{b}/xR\to \frac{1}{y} R / x R\) and the fact that \(\tfrac{1}{y}R/xR\cong R/xyR\) has finite length by Lemma~\ref{lem:artinian_quotient} finish the proof.
\end{proof}

\begin{proposition}\label{prop:invertibility_defined_locally}
Suppose \(R\) is Noetherian of dimension at most \(1\) and \(\textup{Q}(R)\) is Artinian. 
Then we have a group isomorphism
\[ \mathcal{I}(R)\to\bigoplus_{\m\in\maxspec R} \mathcal{I}(R_\mathfrak{m}) \]
that sends a fractional ideal to its vector of localizations. 
\end{proposition}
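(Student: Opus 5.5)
We need to show three things about the map $\mathfrak{a}\mapsto(\mathfrak{a}_\fm)_\fm$: it is well defined with image in the direct sum, it is injective, and it is surjective. It is a homomorphism by Lemma~\ref{lem:arith_loc}, since $(\mathfrak{a}\mathfrak{b})_\fm=\mathfrak{a}_\fm\mathfrak{b}_\fm$.

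\textbf{Well-definedness.} Let $\mathfrak{a}$ be invertible. By Lemma~\ref{lem:quotient_length} choose a regular $r\in R$ with $r\mathfrak{a}\subseteq R$. Then $r\mathfrak{a}$ is again invertible, and $R/r\mathfrak{a}$ and $R/rR$ have finite length by Lemma~\ref{lem:artinian_quotient}. By Lemma~\ref{lem:local_length} their localizations vanish for all but finitely many $\fm$. At every $\fm$ where both vanish we get $(r\mathfrak{a})_\fm=R_\fm=(rR)_\fm$, hence $\mathfrak{a}_\fm=R_\fm$. Each $\mathfrak{a}_\fm$ is invertible by Lemma~\ref{lem:arith_loc}.

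\textbf{Injectivity.} If $\mathfrak{a}_\fm=R_\fm$ for all $\fm$, then the inclusions $\mathfrak{a}\cap R\to\mathfrak{a}$ and $\mathfrak{a}\cap R\to R$ are locally isomorphisms. Localization commutes with finite intersections, so they are isomorphisms globally, and $\mathfrak{a}=R$.

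\textbf{Surjectivity.} This is the main step. Since $\mathcal{I}(R)$ maps homomorphically, it suffices to hit a vector that is trivial outside a single $\fm$, with component $I\in\mathcal{I}(R_\fm)$.

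By Proposition~\ref{prop:semi_loc_inv} we have $I=R_\fm x$ with $x\in\textup{Q}(R_\fm)^*$. Using Lemma~\ref{lem:localize_fractional_ideal}, write $x=y/s$ with $y\in\textup{Q}(R)$ and $s\in R\setminus\fm$; since $s$ is a unit in $R_\fm$, we may take $x\in\textup{Q}(R)$. Clearing a regular denominator $d\in R$ and writing $x=u/d$, it suffices to treat the principal ideals $R_\fm u$ and $R_\fm d$ separately, where $u,d\in R$ become units in $\textup{Q}(R_\fm)$.

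First I want to replace $u$ by a regular element of $R$ generating the same ideal in $R_\fm$. Here Artinianity of $\textup{Q}(R)$ enters: the zero-divisors of $R$ are the union of the finitely many minimal primes $\fp_i$ contained in associated primes. Since $\dim R\le 1$ and $R_\fm$ has $u$ regular, I would adjust $u$ by $u+t$ with $t\in u\fm^N R_\fm\cap R$ and prime avoidance so that $u$ avoids all $\fp_i$ while keeping $R_\fm u$ unchanged. For $N$ large, $u\fm^N R_\fm\subseteq uR_\fm\cdot \fm$, so $R_\fm(u+t)=R_\fm u$ by Nakayama. The adjustment should also avoid the minimal primes not contained in $\fm$. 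I expect this to be the most delicate point.

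With $u\in R$ regular, $R/uR$ has finite length by Lemma~\ref{lem:artinian_quotient}. By the Chinese remainder theorem it decomposes as a finite product of its localizations. Let $\mathfrak{c}\supseteq uR$ be the preimage of the factor at $\fm$, with the other factors replaced by $0$. Then $\mathfrak{c}_\fm=uR_\fm$ and $\mathfrak{c}_\fn=R_\fn$ for $\fn\ne\fm$.

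This $\mathfrak{c}$ is finitely presented because $R$ is Noetherian, so it is invertible by Lemma~\ref{lem:inv_loc_prop}. It is the required preimage for $u$; doing the same for $d$ and taking the quotient finishes the proof.
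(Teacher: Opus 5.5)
The gap is the step you flag yourself. You never show that $u$ can be replaced by a regular element $u+t$ of $R$ with $R_\fm(u+t)=R_\fm u$, and your CRT construction needs this. For a zero-divisor $u$ the module $R/uR$ need not have finite length: take $R=\Z\times\Z$, $\fm=p\Z\times\Z$, $u=(p,0)$; then $u$ is regular in $R_\fm\cong\Z_{(p)}$, but $R/uR\cong\Z/p\Z\times\Z$.

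Your admissible set $T=\{t\in R : t/1\in u\fm R_\fm\}$ is the right one, but two ingredients are missing. First, a reason why $T$ is not contained in the minimal primes $\mathfrak{q}$ that contain $u$. Such $\mathfrak{q}$ automatically satisfy $\mathfrak{q}\not\subseteq\fm$, since $u/1$ is regular in $R_\fm$. Second, a form of prime avoidance that also keeps $u+t$ out of the minimal primes \emph{not} containing $u$; ordinary prime avoidance applied to $T$ does not give this.

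Both are easy to supply. By Lemma~\ref{lem:artinian_quotient}, $R_\fm/uR_\fm$ has finite length, so $\fm^kR_\fm\subseteq uR_\fm$ for some $k$. Hence $\fm^{k+1}\subseteq T$, while $\fm^{k+1}\not\subseteq\mathfrak{q}$ because $\mathfrak{q}\neq\fm$. Let $P$ be the product of the minimal primes not containing $u$, and put $J=\fm^{k+1}P$. Since minimal primes are pairwise incomparable, $J$ lies in no such $\mathfrak{q}$. Any $t\in J$ outside the union of the $\mathfrak{q}$ then makes $u+t$ avoid every minimal prime, i.e.\ $u+t$ is regular.

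Also make sure $\mathfrak{c}$ is the kernel of $R\to(R/uR)_\fm$. Your wording can be read as the preimage of the $\fm$-component, which would be $R_\fm$ at $\fm$ and $uR_\fn$ elsewhere.

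Notice, though, that the fact used in this repair, $\fm^kR_\fm\subseteq uR_\fm$, already finishes the proof with no perturbation at all; this is the paper's route. For the original $u$, put $\mathfrak{c}=\{r\in R : r/1\in uR_\fm\}$. This is exactly the kernel of $R\to(R/uR)_\fm$, i.e.\ your CRT ideal, for any $u$. Exactness of localization gives $\mathfrak{c}_\fm=uR_\fm$. Since $\fm^k\subseteq\mathfrak{c}$, we get $\mathfrak{c}_\fn=R_\fn$ for all $\fn\neq\fm$, so $\mathfrak{c}$ is invertible by Lemma~\ref{lem:inv_loc_prop}.

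The paper runs this argument for an arbitrary invertible $\mathfrak{a}\subseteq R_\fm$: its ideal is $\mathfrak{b}=\tilde{\mathfrak{a}}\cap R$, using $\fm_\fm^k\subseteq\mathfrak{a}$. So it needs neither a regular generator in $R$ nor any description of the zero-divisors of $R$. Your well-definedness and injectivity arguments are fine; scaling by a regular $r$ first is in fact more careful than the paper's one-line argument.
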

\begin{proof}
By Lemma~\ref{lem:local_length} and Lemma~\ref{lem:artinian_quotient} each \(\mathfrak{a}\in\mathcal{I}(R)\) satisfies \(\mathfrak{a}_\mathfrak{m}=R_\mathfrak{m}\) for almost all \(\m\in\maxspec R\), so the map is well-defined.
It is injective because equality of submodules is a local property.
To see it is surjective, let \(\fm\in\maxspec R\) and let \(\mathfrak{a}\subset R_\fm\) be a fractional ideal. Let \(\tilde{\mathfrak{a}}\) be the pullback of \(\mathfrak{a}\) along \(\textup{Q}(R)\to\textup{Q}(R_\fm)\) and let \(\mathfrak{b}=\tilde{\mathfrak{a}}\cap R\). Clearly \(\mathfrak{b}_\fm=\tilde{\mathfrak{a}}_\fm \cap R_\fm = \mathfrak{a} \cap R_\fm = \mathfrak{a}\). 
Since \(\mathfrak{a}\) is fractional by Lemma~\ref{lem:inv_basic}, the ring \(R_\fm/\mathfrak{a}\) is Artinian by Lemma~\ref{lem:artinian_quotient}, and thus its unique prime ideal is nilpotent.
Hence we may choose some \(k\) such that \(\fm_\fm^k\subseteq\mathfrak{a}\).
Let \(\fn\in\maxspec R\) be distinct from \(\fm\). 
Choose some \(x\in \fm^k\setminus\fn\), which exists since \(\fn\) is prime and \(\fm\) is maximal.
Then \(x\in \mathfrak{b}\), yet \(x/1\in \mathfrak{b}_\fn\) is a unit. Hence \(\mathfrak{b}_\fn=R_\fn\).
From Lemma~\ref{lem:inv_loc_prop} we conclude that \(\mathfrak{b}\) is an invertible ideal with image \(\mathfrak{a}\) in \(\bigoplus_{\m} \mathcal{I}(R_\m)\). Hence the map is surjective.
\end{proof}

\begin{corollary}\label{cor:invertible_descent}
Suppose \(R\) is Noetherian of Krull dimension at most \(1\) and \(\textup{Q}(R)\) is Artinian. Let \(R\subseteq S\subseteq \textup{Q}(R)\) be (sub)rings so that \(S\) is Noetherian as \(R\)-module. Then for every \(\mathfrak{b}\in\mathcal{I}(S)\) there exists an \(\mathfrak{a}\in\mathcal{I}(R)\) such that \(S\mathfrak{a}=\mathfrak{b}\), and such that \(\mathfrak{a}_\m=R_\m\) for all \(\m\in\maxspec R\) satisfying \(\mathfrak{b}_\m=S_\m\). 
\end{corollary}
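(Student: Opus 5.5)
The plan is to reduce to the local decomposition of Proposition~\ref{prop:invertibility_defined_locally}, applied to both \(R\) and \(S\), and then to work one maximal ideal of \(R\) at a time.

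First, the hypotheses transfer to \(S\). Since \(S\) is Noetherian as an \(R\)-module, \(S\) is a Noetherian ring and is integral over \(R\), so its Krull dimension is at most \(1\). Regular elements of \(R\) stay regular in \(S\subseteq \textup{Q}(R)\), and every element of \(S\) is a fraction with regular denominator from \(R\). Hence \(\textup{Q}(S)=\textup{Q}(R)\), which is Artinian. Thus Proposition~\ref{prop:invertibility_defined_locally} applies to both \(R\) and \(S\).

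Next I localize at \(\m\in\maxspec R\) rather than at maximal ideals of \(S\). The ring \(S_\m\) is Noetherian as an \(R_\m\)-module, so by Lemma~\ref{lem:semi-local} it is semi-local; its maximal ideals are those of \(S\) lying over \(\m\). By Lemma~\ref{lem:arith_loc}, \(\mathfrak{b}_\m\) is an invertible ideal of \(S_\m\), and by Lemma~\ref{lem:localize_fractional_ideal} it sits inside \(\textup{Q}(S)_\m=\textup{Q}(R_\m)\). Proposition~\ref{prop:semi_loc_inv} then gives \(\mathfrak{b}_\m=S_\m b_\m\) for some \(b_\m\in\textup{Q}(R_\m)^*\).

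The main obstacle is that \(b_\m\) need only be a unit of \(\textup{Q}(R_\m)\). To make \(R_\m b_\m\) an invertible ideal of \(R_\m\), I write \(b_\m=x/y\) with \(x\in R_\m\) and \(y\) regular in \(R_\m\). Since \(b_\m\) is a unit of the total ring of fractions, \(x\) is also regular, so \(R_\m b_\m\) is invertible with inverse \(R_\m y/x\). When \(\mathfrak{b}_\m=S_\m\), I choose \(b_\m=1\) instead. Because \(\mathfrak{b}\) is fractional, Lemma~\ref{lem:quotient_length} and Lemma~\ref{lem:local_length} show that \(\mathfrak{b}_\m=S_\m\) for almost all \(\m\in\maxspec R\). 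Indeed, picking a regular \(r\in R\) with \(r\mathfrak{b}\subseteq S\), \(rS\subseteq \mathfrak{b}\), and \(rS\subseteq S\cap\mathfrak{b}\), the quotients \(\mathfrak{b}/(S\cap\mathfrak{b})\) and \(S/(S\cap\mathfrak{b})\) have finite \(R\)-length, since \(S/rS\) has finite length as an \(R\)-module, being a finitely generated \(R/rR\)-module. So the vector \((R_\m b_\m)_\m\) has almost all entries trivial. By Proposition~\ref{prop:invertibility_defined_locally} for \(R\), it is the localization vector of a unique \(\mathfrak{a}\in\mathcal{I}(R)\), and \(\mathfrak{a}_\m=R_\m\) whenever \(\mathfrak{b}_\m=S_\m\).

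Finally, I check that \(S\mathfrak{a}=\mathfrak{b}\). Both are \(R\)-submodules of \(\textup{Q}(R)\), and \((S\mathfrak{a})_\m=S_\m\mathfrak{a}_\m=S_\m b_\m=\mathfrak{b}_\m\) for every \(\m\in\maxspec R\) by Lemma~\ref{lem:arith_loc}. Equality of submodules is a local property over \(R\), so \(S\mathfrak{a}=\mathfrak{b}\).
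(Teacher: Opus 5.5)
Your proposal is correct and follows essentially the same route as the paper. Both proofs work one maximal ideal $\m$ of $R$ at a time via Proposition~\ref{prop:invertibility_defined_locally}, use Lemma~\ref{lem:semi-local} and Proposition~\ref{prop:semi_loc_inv} to write $\mathfrak{b}_\m=S_\m b_\m$, and take $R_\m b_\m$ as the local component (or $R_\m$ when $\mathfrak{b}_\m=S_\m$) before gluing. Your version also makes explicit two points the paper leaves implicit: that $\mathfrak{b}_\m\neq S_\m$ for only finitely many $\m$, and the final local check that $S\mathfrak{a}=\mathfrak{b}$. The ``obstacle'' you identify is not a real one, since $b_\m\in\textup{Q}(R_\m)^*$ already makes $R_\m b_\m$ invertible.
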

\begin{proof}
By Proposition~\ref{prop:invertibility_defined_locally} we may construct \(\mathfrak{a}\) locally, and thus assume \(R\) is a local ring with maximal ideal \(\fm\). 
If $\mathfrak{b}=S$, then we choose $\mathfrak{a}=R$ to ensure the final condition on $\mathfrak{a}$ holds.
Otherwise, since the ring \(S\) is semi-local by Lemma~\ref{lem:semi-local}, and hence \(\mathfrak{b}=Sb\) for some \(b\in\textup{Q}(R)\) by Proposition~\ref{prop:semi_loc_inv}, we may simply use \(\mathfrak{a}=Rb\).
\end{proof}

\begin{proposition}\label{prop:invertible_ideal_finite_extension}
Let \(R\subseteq S\subseteq\textup{Q}(R)\) be commutative (sub)rings. If \(R\) is Noetherian of Krull dimension at most~\(1\) and \(\ell_R(S/R)<\infty\), then for all invertible ideals \(\mathfrak{a}\subseteq R\) we have \([S/S\mathfrak{a}]_R=[R/\mathfrak{a}]_R\).
\end{proposition}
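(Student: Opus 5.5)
The plan is to compare two composition series of the single module \(S/\mathfrak{a}\) and cancel the length of \(S/R\), which is finite by hypothesis. Since \([\cdot]_R\) is defined maximal ideal by maximal ideal, I would fix \(\fm\in\maxspec R\) and prove \(\ell_{R_\fm}\bigl((S/S\mathfrak{a})_\fm\bigr)=\ell_{R_\fm}\bigl((R/\mathfrak{a})_\fm\bigr)\). Localization is exact, so \((S/S\mathfrak{a})_\fm=S_\fm/(S\mathfrak{a})_\fm\). By Lemma~\ref{lem:arith_loc} we have \((S\mathfrak{a})_\fm=S_\fm\mathfrak{a}_\fm\). Via Lemma~\ref{lem:localize_fractional_ideal}, every module involved sits inside \(\textup{Q}(R_\fm)\).

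The first step is finiteness. By Lemma~\ref{lem:inv_basic}, \(\mathfrak{a}\) is a fractional ideal, so Lemma~\ref{lem:artinian_quotient} gives \(\ell_R(R/\mathfrak{a})<\infty\). Together with \(\ell_R(S/R)<\infty\), Lemma~\ref{lem:local_length} shows that all local lengths appearing below are finite. This is needed so that we may cancel.

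The key step uses that \(\mathfrak{a}_\fm\) is an invertible ideal of the local ring \(R_\fm\) (Lemma~\ref{lem:arith_loc}). By Proposition~\ref{prop:semi_loc_inv} there is some \(x\in\textup{Q}(R_\fm)^*\) with \(\mathfrak{a}_\fm=R_\fm x\), and hence \(S_\fm\mathfrak{a}_\fm=S_\fm x\). Multiplication by the unit \(x\) is an \(R_\fm\)-linear bijection of \(\textup{Q}(R_\fm)\) carrying \(R_\fm\) onto \(\mathfrak{a}_\fm\) and \(S_\fm\) onto \(S_\fm x\). It therefore induces an isomorphism \(S_\fm/R_\fm\cong S_\fm x/\mathfrak{a}_\fm\), so \(\ell(S_\fm\mathfrak{a}_\fm/\mathfrak{a}_\fm)=\ell(S_\fm/R_\fm)<\infty\).

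Now consider the two chains \(\mathfrak{a}_\fm\subseteq R_\fm\subseteq S_\fm\) and \(\mathfrak{a}_\fm\subseteq S_\fm\mathfrak{a}_\fm\subseteq S_\fm\). The second chain is valid because \(1\in S_\fm\). Additivity of length (Lemma~\ref{lem:local_length}) applied to both chains gives
\[\ell(R_\fm/\mathfrak{a}_\fm)+\ell(S_\fm/R_\fm)=\ell(S_\fm/\mathfrak{a}_\fm)=\ell(S_\fm/S_\fm\mathfrak{a}_\fm)+\ell(S_\fm\mathfrak{a}_\fm/\mathfrak{a}_\fm).\]
In particular \(S_\fm/\mathfrak{a}_\fm\) has finite length. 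Substituting the equality from the key step and cancelling the finite quantity \(\ell(S_\fm/R_\fm)\) yields the claim at \(\fm\).

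I expect no serious obstacle. The only points needing care are, first, that \(x\) is a genuine unit of \(\textup{Q}(R_\fm)\), so that multiplication by \(x\) is injective even though \(R\) may have zero-divisors; Proposition~\ref{prop:semi_loc_inv} supplies exactly this. Second, the cancellation requires finiteness of all the lengths involved, which the first step provides.
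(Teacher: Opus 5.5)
Your proof is correct and is essentially the paper's argument. You localize, write \(\mathfrak{a}_\fm=R_\fm x\) with \(x\) a unit of \(\textup{Q}(R_\fm)\) via Proposition~\ref{prop:semi_loc_inv}, use multiplication by \(x\) to get \(S_\fm/R_\fm\cong S_\fm\mathfrak{a}_\fm/\mathfrak{a}_\fm\), and cancel \(\ell(S_\fm/R_\fm)\) between the two filtrations of \(S_\fm/\mathfrak{a}_\fm\), which are exactly the paper's two exact sequences.
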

\begin{proof}
It suffices to prove for local \(R\) that \(\ell(S/S\mathfrak{a})=\ell(R/\mathfrak{a})\).
In this case \(\mathfrak{a}=Ra\) for some regular \(a\in R\) by Proposition~\ref{prop:semi_loc_inv}.
Note that the map \(S/R \to S\mathfrak{a}/ \mathfrak{a}\) given by \(x\mapsto a x\) is an isomorphism since \(a\) is regular in \(S\).
We conclude that \(\ell(S\mathfrak{a}/\mathfrak{a})=\ell(S/R)<\infty\).
We have exact sequences of \(R\)-modules
\[0\to R/\mathfrak{a} \to S/\mathfrak{a} \to S/R\to 0 \quad\text{and}\quad 0\to S\mathfrak{a}/\mathfrak{a}\to S/\mathfrak{a}\to S/S\mathfrak{a}\to 0.\]
Hence \(\ell(R/\mathfrak{a})+\ell(S/R)=\ell(S\mathfrak{a}/\mathfrak{a})+\ell(S/S\mathfrak{a})\) by Lemma~\ref{lem:local_length} and Lemma~\ref{lem:artinian_quotient}.
\end{proof}

\begin{example}
With the notation as in Proposition~\ref{prop:invertible_ideal_finite_extension}, the \(R\)-modules \(R/\mathfrak{a}\) and \(S/S\mathfrak{a}\) need not be  isomorphic. 
In fact, if \(R\) and \(S\) are orders, then the modules need not even be isomorphic over \(\Z\).

Take \(R=\Z[2\ii]\) and \(S=\Z[\ii]\) with \(\mathfrak{a}=2\ii R\), which is clearly invertible.
Then \(R/\mathfrak{a}\cong \Z/4\Z\) and \(S/S\mathfrak{a} \cong (\Z/2\Z)^2\) as \(\Z\)-modules, which are non-isomorphic.
\end{example}

\section{Algorithmic preliminaries}\label{sec:algorithmic_clich_e}

We briefly describe how we choose to represent our mathematical objects algorithmically.
Ideally, whether an algorithm runs in polynomial time is independent of this choice, as long as the choice is reasonable. Hence the coming sections are written in a sufficiently abstract way that is agnostic to the choice of representation, following the style of \cite{Lenstra_algorithms}.

We encode a finitely generated free Abelian group by an integer \(n\), its rank, and a linear map between such groups as a matrix in the natural way. A subgroup is represented by (the image of) an injective linear map.
Using the LLL-algorithm \cite[Section~5]{Lattices} we may replace the representation of a full-rank subgroup by an equivalent one, i.e., one with the same image, such that the lengths of the coefficients of the matrix representing this map are bounded by a polynomial in the length of the index of this subgroup.
Consequently, iterative computations avoid coefficient blowup as long as we properly bound the index.

Recall that an order is a commutative ring that as an Abelian group is finitely generated and free.
We encode an order as a finitely generated free Abelian group together with some \(M\in\Z^{n\times n\times n}\) describing a multiplicative structure on \(\Z^n\):
\[ (x_1,\dotsc,x_n) \cdot (y_1,\dotsc,y_n) = \Big( \sum_{i,j} x_i y_j M_{i,j,k} \Big)_k  \]
A fractional ideal of an order \(R\) is up to integer scaling a finitely generated subgroup of \(R\) and will be encoded as such.
For fractional ideals \(\mathfrak{a}\) and \(\mathfrak{b}\) of \(R\) we may efficiently compute \(\mathfrak{a}+\mathfrak{b}\), \(\mathfrak{a}\mathfrak{b}\) and \(\mathfrak{a}:\mathfrak{b}\).
Consequently, for an additional fixed constant $k\in\Z_{\geq 1}$ we may compute \(\mathfrak{a}^k\) in polynomial time.
However, should $k$ be variable, then $k$ must be encoded in unary if we desire a polynomial-time algorithm, as would be the case for integer exponentiation.

A concept that we will make use of in this paper is that of a \emph{functorial} algorithm, or equivalently an algorithm that computes a functor. 
Suppose $F:\mathcal{C}\to\mathcal{D}$ is a functor and suppose we have chosen a representation of the objects and morphisms of both $\mathcal{C}$ and $\mathcal{D}$.
We say an algorithm computes $F$ if the algorithm takes as input objects $X,Y\in\mathcal{C}$ and a morphism $f:X\to Y$, and outputs $F(X)$, $F(Y)$ and $F(f)$.
When dealing with category theory informally it is common to be vague about the action of a functor on the morphisms, as it is generally `obvious' what it should be.
We will treat morphisms similarly in the algorithms.

\section{Krull--Akizuki}

In this section we will prove a variation on the Krull--Akizuki theorem.
Throughout this section \(R\) is a commutative ring.
We write \(\textup{nil}(R)=\{x\in R\,|\, (\exists\, n\in\Z_{>0})\ x^n=0\}\) for the \emph{nilradical} of \(R\).
We call the ring 
\[ \{ x\in Q(R) \,|\, \exists\, \textup{monic } f \in R[X] \textup{ s.t. } f(x)=0 \}. \]
the \emph{integral closure} of $R$, and say $R$ is \emph{integrally closed} if it equals its integral closure.
Note that the integral closure of $R$ need not be a domain.

\begin{lemma}\label{lem:nilQ}
We have \(\textup{Q}(R)\cdot \textup{nil}(R)=\textup{nil}(\textup{Q}(R))\), 
and there is a natural \(R\)-algebra isomorphism \(\textup{Q}(R)/\textup{nil}(\textup{Q}(R))\to \textup{Q}(R/\textup{nil}(R))\).
\end{lemma}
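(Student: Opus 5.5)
We will prove the two assertions separately, writing \(N=\textup{nil}(R)\) and \(\pi:R\to R/N\) for the quotient map.

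For the first assertion, the inclusion \(\textup{Q}(R)\cdot N\subseteq\textup{nil}(\textup{Q}(R))\) is immediate: if \(x\in N\) with \(x^n=0\) and \(q\in\textup{Q}(R)\), then \((qx)^n=0\). For the reverse inclusion, write an element of \(\textup{nil}(\textup{Q}(R))\) as \(r/s\) with \(s\) regular. If \((r/s)^n=0\), then \(r^n/s^n=0\) in \(\textup{Q}(R)\), so \(r^n=0\) in \(R\) because \(R\to\textup{Q}(R)\) is injective. Hence \(r\in N\) and \(r/s=s^{-1}r\in\textup{Q}(R)\cdot N\).

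For the isomorphism, the key point is that \(\pi\) maps regular elements of \(R\) to regular elements of \(R/N\). Let \(s\) be regular and suppose \(sx\in N\), say \((sx)^n=0\). Then \(s^n x^n=0\), and since \(s^n\) is regular we get \(x^n=0\), so \(x\in N\). Consequently \(R\to R/N\to\textup{Q}(R/N)\) sends regular elements to units and extends to an \(R\)-algebra map \(\phi:\textup{Q}(R)\to\textup{Q}(R/N)\), \(r/s\mapsto\pi(r)/\pi(s)\). This map kills \(N\), hence kills \(\textup{Q}(R)N=\textup{nil}(\textup{Q}(R))\), and so descends to \(\bar\phi:\textup{Q}(R)/\textup{nil}(\textup{Q}(R))\to\textup{Q}(R/N)\).

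It remains to show that \(\bar\phi\) is bijective.

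\emph{Injectivity.} If \(\pi(r)/\pi(s)=0\), then \(\pi(r)=0\) since \(\pi(s)\) is a unit in \(\textup{Q}(R/N)\). So \(r\in N\) and \(r/s\) is nilpotent.

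\emph{Surjectivity.} This is the main obstacle. An element of \(\textup{Q}(R/N)\) has the form \(\pi(r)/\pi(t)\) with \(\pi(t)\) regular in \(R/N\), but \(t\) itself need not be regular in \(R\). We must show that \(\pi(t)\) becomes a unit in \(\textup{Q}(R)/\textup{nil}(\textup{Q}(R))\); then \(\pi(r)/\pi(t)\) lies in the image.

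We first try the following route. The hypothesis is that multiplication by \(t\) on \(R/N\) is injective. We want to find a regular \(s\in R\) and some \(u\in R\) with \(tu\equiv s \pmod N\); then \(t\cdot(u/s)\equiv1\) modulo nilpotents in \(\textup{Q}(R)\). This does not follow for a completely general commutative ring, so we expect to invoke the standing hypotheses where the lemma is applied, namely Noetherian with \(\textup{Q}(R)\) Artinian. In that setting the regular elements are exactly those outside the union of the associated primes, and \(\pi(t)\) being regular in the reduced Noetherian ring \(R/N\) means \(t\) avoids all minimal primes. We then aim to use prime avoidance to modify \(t\) by an element of \(N\): the set \(t+N\) meets the complement of every associated prime (since \(N\) is contained in the minimal primes), which should produce a regular \(s\equiv t \pmod N\) and finish the argument.

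If the lemma is intended for arbitrary \(R\), we will instead look for a general argument, for instance via \(\textup{Q}(R)\) being a localization so that \(\textup{Q}(R)/\textup{nil}\) is the localization of \(R/N\) at \(\pi(\text{regulars})\), and compare multiplicative sets. We flag this comparison as the delicate step.
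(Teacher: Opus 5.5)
Your proof of the first assertion, of well-definedness (regular elements stay regular modulo \(N\)) and of injectivity coincides with the paper's. The gap is in surjectivity, and here the paper gives no argument either: it merely writes ``thus we have a well-defined surjection'', which does not follow from what precedes it. Your suspicion is justified, because for an arbitrary commutative ring the statement is false, so the general argument in your last paragraph does not exist. Take \(R=k[x,y]/(xy,y^2)\) with \(k\) a field. Its elements are \(a+by\) with \(a\in k[x]\), \(b\in k\). One has \(N=ky\) and \(R/N=k[x]\), and \(a+by\) is regular if and only if \(a(0)\neq 0\); if \(a(0)=0\) it annihilates \(y\). Hence \(\textup{Q}(R)=R_{(x,y)}\) and \(\textup{Q}(R)/\textup{nil}(\textup{Q}(R))\cong k[x]_{(x)}\), which is not a field, whereas \(\textup{Q}(R/N)=k(x)\). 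Concretely, \(x\) is regular in \(R/N\) but every lift of it is a zero divisor in \(R\). So a hypothesis such as ``\(\textup{Q}(R)\) Artinian'', which fails here, really is needed.

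Your repair under that hypothesis does not work as written, and it never uses that \(\textup{Q}(R)\) is Artinian, which is exactly the missing input. Since \(N\) lies in every prime, \(t+N\subseteq\fp\) whenever \(t\in\fp\). So the claim that \(t+N\) meets the complement of every associated prime is equivalent to \(t\) already being regular. Your justification (that \(N\) lies in the minimal primes) only covers minimal primes. In the example, \(x+N\) lies inside the embedded associated prime \((x,y)=\textup{Ann}(y)\), and adding nilpotents can never help.

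Either of the following closes the gap. (a) Suppose \(R\) is Noetherian and \(\textup{Q}(R)\) is Artinian. An embedded associated prime \(\fp\supsetneq\mathfrak{q}\), with \(\mathfrak{q}\) minimal, would give a chain \(\mathfrak{q}\textup{Q}(R)\subsetneq\fp\textup{Q}(R)\) in a zero-dimensional ring. So all associated primes are minimal, and \(\pi(t)\) regular in \(R/N\) already forces \(t\) to be regular in \(R\); no prime avoidance is needed. (b) More simply, and without Noetherianity: \(\textup{Q}(R)/\textup{nil}(\textup{Q}(R))\cong(R/N)[\pi(\textup{Reg}\,R)^{-1}]\) (your comparison of multiplicative sets) is a reduced Artinian ring, hence a finite product of fields. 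Multiplication by \(\pi(t)\) stays injective after localization, and a regular element of a finite product of fields is a unit.

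Route (b) is what the paper actually needs. Via Lemma~\ref{lem:integral-closure-iff}, the statement is used in Theorem~\ref{thm:exist-invertible-extension} for the integral closure \(\mathcal{O}\). That ring need not be Noetherian (e.g.\ \(\mathcal{O}=\Z+\Q\varepsilon\) for \(R=\Z[\varepsilon]/(\varepsilon^2)\)), but \(\textup{Q}(\mathcal{O})=\textup{Q}(R)\) is Artinian.
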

\begin{proof}
Clearly \(\textup{Q}(R) \cdot \textup{nil}(R)\subseteq \textup{nil}(\textup{Q}(R))\). Conversely, suppose \(r/s\in\textup{nil}(\textup{Q}(R))\) with \(r,s\in R\) and \(s\) regular. Then \(0=(r/s)^n=r^n/s^n\) for some \(n\in\Z_{\geq 1}\) and thus \(r^n=0\). We conclude that \(r\in \textup{nil}(R)\), so \(r/s=(1/s) \cdot r \in \textup{Q}(R) \cdot \textup{nil}(R)\). Hence \(\textup{Q}(R) \cdot \textup{nil}(R)= \textup{nil}(\textup{Q}(R))\).

Suppose \(r\in R\) is regular. If \(s\in R\) satisfies \(rs\in\textup{nil}(R)\), then \(s\in\textup{nil}(R)\) by regularity of \(r\). Hence \(r\) is regular in \(R/\textup{nil}(R)\). Thus we have a well-defined surjection \(\textup{Q}(R)\to\textup{Q}(R/\textup{nil}(R))\). Clearly \(\textup{Q}(R)\cdot \textup{nil}(R)\) is contained in the kernel. Conversely, if \(r/s\) is in the kernel, then \(r\in\textup{nil}(R)\) and thus \(r/s\in \textup{nil}(\textup{Q}(R))\). Thus \(\textup{Q}(R)/\textup{nil}(\textup{Q}(R))\cong\textup{Q}(R/\textup{nil}(R))\).
\end{proof}

\begin{lemma}\label{lem:integral-closure-iff}
\(R\) is integrally closed if and only if \(\textup{nil}(R)=\textup{nil}(Q(R))\) and \(R/\textup{nil}(R)\) is integrally closed.
\end{lemma}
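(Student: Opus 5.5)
We prove both implications using Lemma~\ref{lem:nilQ}. Write \(N=\textup{nil}(R)\), \(\bar R=R/N\), and let \(\pi:\textup{Q}(R)\to\textup{Q}(R)/\textup{nil}(\textup{Q}(R))\cong\textup{Q}(\bar R)\) be the natural surjection. The key observation is that every nilpotent element of \(\textup{Q}(R)\) is integral over \(R\): it is a root of \(X^n\) for some \(n\). Also, integrality is compatible with \(\pi\): if \(f\in R[X]\) is monic with \(f(x)=0\), then the reduction \(\bar f\in\bar R[X]\) is monic with \(\bar f(\pi(x))=0\).

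For the forward direction, suppose \(R\) is integrally closed. Every element of \(\textup{nil}(\textup{Q}(R))\) is integral over \(R\), hence lies in \(R\), and is then nilpotent in \(R\). Thus \(\textup{nil}(\textup{Q}(R))\subseteq N\), and the reverse inclusion is trivial. Next, let \(y\in\textup{Q}(\bar R)\) be integral over \(\bar R\), say \(g(y)=0\) with \(g\) monic in \(\bar R[X]\). Lift \(y\) to \(x\in\textup{Q}(R)\) via \(\pi\), and lift \(g\) to a monic \(f\in R[X]\). Then \(\pi(f(x))=0\), so \(f(x)\) is nilpotent, say \(f(x)^k=0\). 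Hence \(x\) is a root of the monic polynomial \(f^k\), so \(x\in R\) and \(y=\pi(x)\in\bar R\).

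For the converse, suppose \(\textup{nil}(\textup{Q}(R))=N\) and \(\bar R\) is integrally closed. Let \(x\in\textup{Q}(R)\) be integral over \(R\). Then \(\pi(x)\) is integral over \(\bar R\), so \(\pi(x)\in\bar R\), i.e.\ \(\pi(x)=\pi(r)\) for some \(r\in R\). Hence \(x-r\in\ker\pi=\textup{nil}(\textup{Q}(R))=N\subseteq R\), and therefore \(x\in R\).

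The only point requiring care is the identification of \(\bar R\) inside \(\textup{Q}(\bar R)\) with \(\pi(R)\). This holds because the isomorphism of Lemma~\ref{lem:nilQ} is an \(R\)-algebra map, and because regular elements of \(R\) stay regular in \(\bar R\), as shown in that lemma. Once this is in place the argument is routine; I expect no serious obstacle, the mildest being the lifting of a monic polynomial, which is immediate since \(R\to\bar R\) is surjective.
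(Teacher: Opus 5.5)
Your proof is correct and follows the paper's argument essentially step for step. In the forward direction you use that nilpotents of \(\textup{Q}(R)\) are roots of \(X^n\), and you lift \(y\) and \(g\) and pass to \(f^k\). In the converse you pull back along \(\textup{Q}(R)\to\textup{Q}(R/\textup{nil}(R))\), whose kernel is \(\textup{nil}(\textup{Q}(R))=\textup{nil}(R)\subseteq R\).
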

\begin{proof}
Write \(\mathfrak{n}=\textup{nil}(R)\).
Suppose \(R\) is integrally closed.
Each \(y\in \textup{nil}(\textup{Q}(R))\) is a root of \(X^n\in R[X]\) for some \(n\geq 1\), so \(y\in R\) and \(\textup{nil}(\textup{Q}(R))=\mathfrak{n}\).
Suppose \(x\in\textup{Q}(R/\mathfrak{n})\) and monic \(g\in (R/\mathfrak{n})[X]\) are such that \(g(x)=0\).
Then \(g\) lifts to some monic \(\tilde{g}\in R[X]\), and \(x\) lifts to some \(\tilde{x}\in\textup{Q}(R)\) along the map from Lemma~\ref{lem:nilQ}.
It follows that \(\tilde{x}\) is a root of \(\tilde{g}^n\) for some \(n\geq 1\). 
Hence \(\tilde{x}\) is integral over \(R\) and \(\tilde{x}\in R\). Thus \(x\in R/\mathfrak{n}\) and \(R/\mathfrak{n}\) is integrally closed.

Suppose that \(\mathfrak{n}=\textup{nil}(Q(R))\) and that \(R/\mathfrak{n}\) is integrally closed.
Let \(x\in\textup{Q}(R)\) be integral over \(R\). Then its image in \(\textup{Q}(R/\mathfrak{n})\) is integral over \(R/\mathfrak{n}\) and thus contained in \(R/\mathfrak{n}\).
The inverse image of \(R/\mathfrak{n}\) under \(\textup{Q}(R)\to\textup{Q}(R/\mathfrak{n})\) is \(R+\mathfrak{n}=R\), so \(x\in R\). Hence \(R\) is integrally closed.
\end{proof}

\begin{theorem}\label{thm:exist-invertible-extension}
Suppose \(R\) is Noetherian of Krull dimension at most \(1\) and \(\textup{Q}(R)\) is Artinian, and let \(\mathfrak{a}\) be a fractional ideal. 
Then there exists a subring \(R\subseteq S \subseteq\textup{Q}(R)\) which is Noetherian as \(R\)-module so that \(S\mathfrak{a}\) is invertible.
\end{theorem}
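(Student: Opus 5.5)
The plan is to construct \(S\) as the union of the increasing chain of rings \(S_n=\mathfrak{a}^n:\mathfrak{a}^n\), and to show that this chain stabilizes and that its stable value makes \(\mathfrak{a}\) invertible. The step I expect to be the real obstacle is producing a principal reduction when residue fields are finite (step five below).

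First, reduce to finitely many local problems. By Lemma~\ref{lem:quotient_length} we may pick a regular \(x\in R\cap\mathfrak{a}\). By Lemma~\ref{lem:artinian_quotient} and Lemma~\ref{lem:local_length}, \(R/xR\) has finite length, so \(\mathfrak{a}_\fm=R_\fm\) for all but finitely many \(\fm\in\maxspec R\); scaling \(\mathfrak{a}\) by a regular element changes nothing. Invertibility is local by Lemma~\ref{lem:inv_loc_prop}. Moreover, any ring \(S\) of the form \(\mathfrak{c}:\mathfrak{c}\), with \(\mathfrak{c}\) fractional, is a finitely generated \(R\)-module and agrees with \(R\) at almost all \(\fm\). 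So it suffices to find one global ring of this shape that works at the finitely many relevant \(\fm\).

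Second, the candidate. Since \(\mathfrak{a}^n\) is fractional, each \(S_n\) is a fractional ideal (it contains \(R\) and is contained in \(x^{-n}R\)), hence Noetherian as an \(R\)-module. Because \(\mathfrak{a}^n\mathfrak{a}=\mathfrak{a}^{n+1}\), we get \(S_n\subseteq S_{n+1}\). Third, I want the chain to stabilize. The idea is to bound \(\ell_R(S_n/R)\) uniformly. The inclusions \(x^n S_n\subseteq x^n(\mathfrak{a}^n:\mathfrak{a}^n)\) together with \(\mathfrak{a}^n\supseteq x^nR\) reduce this to controlling \(\ell(\mathfrak{a}^n/x\mathfrak{a}^{n-1})\). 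This is a Hilbert--Samuel type quantity, bounded by \(\ell(R/xR)\) via multiplicativity of lengths (as in the proof of Proposition~\ref{prop:invertible_ideal_finite_extension}). Once the chain stabilizes at \(S=S_N\), the module \(\mathfrak{c}=S\mathfrak{a}\) satisfies \(\mathfrak{c}^k:\mathfrak{c}^k=S\) for all large \(k\).

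Fourth, show that \(\mathfrak{c}\) is invertible in the semi-local ring \(S_\fm\) (semi-local by Lemma~\ref{lem:semi-local}). By Proposition~\ref{prop:semi_loc_inv} it suffices to find \(y\in\mathfrak{c}\) with \(\mathfrak{c}=S_\fm y\). If there is \(y\in\mathfrak{c}\) with \(\mathfrak{c}^{k+1}=y\mathfrak{c}^k\) (a principal reduction), then \(y^{-1}\mathfrak{c}\subseteq\mathfrak{c}^k:\mathfrak{c}^k=S\). Conversely, \(y\in\mathfrak{c}\) gives \(yS\subseteq\mathfrak{c}\), so \(\mathfrak{c}=yS\).

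Fifth, and hardest, is producing the principal reduction \(y\). When all residue fields of \(R_\fm\) are infinite, a general element of \(\mathfrak{a}\) works: \(\mathfrak{a}^k/\fm\mathfrak{a}^k\) is eventually spanned in a controlled way, and avoiding finitely many proper subspaces is possible. For finite residue fields I would first try to replace \(y\) by a product of elements, using that \(\mathfrak{c}^k\) only needs a reduction for some power. Failing that, I would base change to \(R(t)=R[t]_{\fm R[t]}\), which has infinite residue field, and descend: formation of the colon ideals commutes with this faithfully flat extension by Lemma~\ref{lem:arith_loc}-style arguments, and invertibility descends along it.
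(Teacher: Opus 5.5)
Your proposal has a genuine gap, in two linked places.

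\textbf{The third step does not give stabilization.} With \(\mathfrak{a}\subseteq R\), what \(x^nS_n\subseteq S_n\mathfrak{a}^n=\mathfrak{a}^n\) actually yields is \(\ell_R(S_n/R)\le\ell_R(\mathfrak{a}^n/x^nR)=\sum_{j=1}^{n}\ell_R(\mathfrak{a}^j/x\mathfrak{a}^{j-1})\). Each summand is indeed at most \(\ell_R(R/xR)\), but the resulting bound is only \(O(n)\): for \(R=\Z_{(p)}\), \(\mathfrak{a}=pR\), \(x=p^2\) the sum equals \(n\). A single summand does not control \(\ell_R(S_n/R)\) either: for \(R=F[[t^2,t^3]]\) with \(F\) a field, \(\mathfrak{a}=\fm\), \(x=t^2\), it vanishes for \(n\geq 2\), while \(S_n=F[[t]]\neq R\).

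The sum stays bounded only when \(x\) is a reduction of \(\mathfrak{a}\), i.e.\ \(x\mathfrak{a}^k=\mathfrak{a}^{k+1}\) for some \(k\). That is exactly what your fifth step is supposed to supply. Without it, stabilization is not automatic, because the integral closure of a one-dimensional Noetherian ring need not be finite over it. So the order of the argument must be reversed:
\begin{enumerate}
\item first produce, locally, \(y\in\mathfrak{a}^k\) with \(y\mathfrak{a}^{kj}=\mathfrak{a}^{k(j+1)}\) for \(j\geq r\);
\item then \(S_n\subseteq y^{-r}\mathfrak{a}^{kr}\) for all \(n\), a fixed finitely generated module, so the chain stabilizes;
\item only then does your fourth step (which is fine) show that \(\mathfrak{c}^k\) is principal, hence \(\mathfrak{c}\) is invertible.
\end{enumerate}

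\textbf{The fifth step, which you rightly call the crux, is only announced.} Even with infinite residue field you need the Northcott--Rees argument. Pick \(y\in\mathfrak{a}\) whose image in \(\mathfrak{a}/\fm\mathfrak{a}\) avoids the finitely many minimal primes of the fibre cone \(\bigoplus_k\mathfrak{a}^k/\fm\mathfrak{a}^k\) that do not contain its irrelevant ideal, and conclude with Nakayama.

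With finite residue field, your first idea, taking products of elements, fails. In \(R=\mathbb{F}_2[[u,v]]/(uv(u+v))\) with \(\mathfrak{a}=\fm\), the initial form of a product of \(k\) elements of \(\fm\) is either zero or lies in one of the minimal primes \((u),(v),(u+v)\) of the associated graded ring. Hence no such product is a reduction of \(\fm^k\). One needs a genuinely new homogeneous element, such as \(u^2+uv+v^2\in\fm^2\), obtained by homogeneous prime avoidance.

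Your fallback via \(R(t)\) can be made to work, but three things would have to be checked, and none of them is done:
\begin{enumerate}
\item that \(R(t)\) again satisfies the hypotheses;
\item that the colons commute with \(-\otimes_R R(t)\) inside \(\textup{Q}(R)\otimes_R R(t)\);
\item that invertibility descends by faithful flatness.
\end{enumerate}

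\textbf{Comparison with the paper.} The paper avoids reduction theory altogether. After splitting \(\textup{Q}(R)\) into local factors, Krull--Akizuki makes the integral closure \(\mathcal{O}\) modulo its nilradical Dedekind, so \(\mathcal{O}\mathfrak{a}\) is invertible. Writing \(1=\sum_i a_ib_i\) with \(b_i\) in the lifted inverse, \(S=R[a_ib_j]\) is finite over \(R\), because it is generated by finitely many integral elements, even when \(\mathcal{O}\) itself is not finite. Your route, once completed, would prove something stronger: that \(\bigcup_n\mathfrak{a}^n:\mathfrak{a}^n\) itself is finite and works, which is essentially Theorem~\ref{thm:main_ideal} without the explicit bound. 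That is precisely why it needs the reduction-theoretic input you have not supplied.
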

\begin{proof}
The ring \(\textup{Q}(R)\) is Noetherian and Artinian and thus a finite product of local rings. 
Each of its idempotents is integral over \(R\), and without loss of generality we may assume they are contained in \(R\).
Hence \(R\) factors in a way compatible with the factorization of \(\textup{Q}(R)\).
It suffices to prove the theorem for each factor, and thus without loss of generality we assume that \(\textup{Q}(R)\) is local with nilpotent maximal ideal~\(\mathfrak{n}\).

Let \(\mathcal{O}\) be the integral closure of \(R\). 
From Lemma~\ref{lem:integral-closure-iff} it follows both that \(\mathfrak{n}=\textup{nil}(\mathcal{O})\) and that \(\mathcal{O}/\mathfrak{n}\) is integrally closed.
As every zero divisor of $\mathcal{O}$ is a zero divisor of $\textup{Q}(R)$ and hence contained in $\mathfrak{n}$, we conclude that $\mathcal{O}/\mathfrak{n}$ is a domain.
Moreover, the Krull dimension of \(\mathcal{O}/\mathfrak{n}\) is at most that of \(\mathcal{O}\), and the latter is at most that of \(R\) by integrality of \(\mathcal{O}\) over \(R\) (Corollary~4.18 in \cite{Eisenbud}).
Hence by the Krull--Akizuki theorem (Corollary~VII.2.2 in \cite{Bourbaki}) the ring \(\mathcal{O}/\mathfrak{n}\) is Noetherian.
We conclude that \(\mathcal{O}/\mathfrak{n}\) is a Dedekind domain.
Hence \(\mathfrak{a}\cdot(\mathcal{O}/\mathfrak{n})\) is invertible (Theorem~VII.2.1 in \cite{Bourbaki}). 
Let \(\mathfrak{b}\) be the pullback of its inverse along \(\textup{Q}(\mathcal{O})\to\textup{Q}(\mathcal{O}/\mathfrak{n})\).
Then \(1+n\in \mathfrak{a}\mathfrak{b}\) for some \(n\in\textup{nil}(\textup{Q}(\mathcal{O}))\), but since \(1+n\in\mathcal{O}^*\) we have \(1\in\mathfrak{a}\mathfrak{b}\).
Moreover, \(\mathfrak{a}\mathfrak{b}\subseteq \mathcal{O}+\textup{nil}(\textup{Q}(\mathcal{O}))=\mathcal{O}\) by Lemma~\ref{lem:integral-closure-iff}, so \(\mathfrak{b}\) is the inverse of \(\mathcal{O}\mathfrak{a}\).

Note however that \(\mathcal{O}\) need not be a Noetherian \(R\)-module.
Instead write \(1=\sum_{i\in I}a_ib_i\) for some finite set \(I\) and \(a_i\in\mathfrak{a}\) and \(b_i\in\mathfrak{b}\).
Then \(S=R[a_ib_j : i,j\in I]\) is Noetherian over \(R\), and \(\mathfrak{b}'=\sum_{i\in I} S b_i\) satisfies \(\mathfrak{a}\mathfrak{b}'\subseteq S\) and \(1\in \mathfrak{a}\mathfrak{b}'\).
Hence \(\mathfrak{b}'\) is an inverse of \(S\mathfrak{a}\).
\end{proof}

\section{Blowing up}\label{sec:blowing_up}

In this section we generalize a result of Dade--Taussky--Zassenhaus (Theorem~1.5.2 in \cite{Dade-Zassenhaus}).
Paraphrased, their theorem states that for a fractional ideal $\mathfrak{a}$ in a one-dimensional Noetherian domain $R$ the ring
\[ \bigcup_{n\geq 1} (\mathfrak{a}^n : \mathfrak{a}^n),  \]
which we refer to as the \emph{blowup of $R$ at $\mathfrak{a}$}, is the smallest ring $R\subseteq S\subseteq Q(R)$ with respect to inclusion for which $S\mathfrak{a}$ is invertible.
Our generalization (Theorem~\ref{thm:main_ideal}) follows notes on lectures by Lenstra \cite{lecture} and is essentially the same, save for two changes. 
Firstly, we replace the maximal order in the proof of \cite{lecture} by the ring obtained by Theorem~\ref{thm:exist-invertible-extension} to allow $R$ to contain zero-divisors.
Secondly, we use a theorem of Sally \cite{generators} to effectively bound the point at which the limit defining the blowup stabilizes in the absence of an underlying Dedekind domain like $\Z$ over which $R$ is finitely generated (cf.\ Lemma~\ref{lem:iota_bound_basis}).

\begin{definition}
Let \(R\) be a commutative ring. For an \(R\)-module \(M\) write \(\gamma_R(M)\) for the minimal cardinality of a generating set of \(M\) as \(R\)-module.
For a fractional ideal \(\mathfrak{a}\) of \(R\) we define 
\[n(\mathfrak{a})=n_R(\mathfrak{a})=\max\{ \gamma_{R_\m}(I_\m) \,|\, I\subseteq R\textup{ an ideal},\, \m\in\maxspec R \textup{ s.t. }\mathfrak{a}_\m \neq R_\m \}.\] 
If \(R\) is local with maximal ideal \(\fm\) we write \(\kappa(R)\) for \(R/\fm\).
\end{definition}

\begin{theorem}\label{thm:finite_local_generators}
If \(R\) is a Noetherian commutative ring of Krull dimension at most \(1\) such that \(\textup{Q}(R)\) is Artinian and \(\mathfrak{a}\) is a fractional ideal of \(R\), then \(n(\mathfrak{a})\) is finite.
\end{theorem}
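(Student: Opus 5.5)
The plan is to split the statement into two finiteness claims. First, only finitely many \(\m\in\maxspec R\) satisfy \(\mathfrak{a}_\m\neq R_\m\). Second, for each single maximal ideal \(\m\), the quantity \(\gamma_{R_\m}(I_\m)\) is bounded uniformly in the ideal \(I\subseteq R\). Together these give that \(n(\mathfrak{a})\) is a maximum over finitely many bounded sets, hence finite.

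For the first claim, apply Lemma~\ref{lem:quotient_length} to the inclusions \(\mathfrak{a}\subseteq R+\mathfrak{a}\) and \(R\subseteq R+\mathfrak{a}\). Note that \(R+\mathfrak{a}\) is a fractional ideal. Both quotients \((R+\mathfrak{a})/\mathfrak{a}\) and \((R+\mathfrak{a})/R\) then have finite length. By Lemma~\ref{lem:local_length} they localize to zero at all but finitely many \(\m\), and at every other \(\m\) we have \(\mathfrak{a}_\m=(R+\mathfrak{a})_\m=R_\m\).

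For the second claim, fix \(\m\) and replace \(R\) by \(R_\m\). This is a Noetherian local ring of dimension at most \(1\), and \(\textup{Q}(R_\m)\cong \textup{Q}(R)_\m\) is Artinian by Lemma~\ref{lem:localize_fractional_ideal}.
\begin{enumerate}
\item Suppose \(\m\) contains no regular element, which happens when \(\m\) is a minimal prime. Then \(R_\m=\textup{Q}(R_\m)\) is Artinian. Every ideal \(I\) satisfies \(\gamma(I)=\ell(I/\m I)\leq \ell(R_\m)<\infty\).
\item Otherwise, fix a regular element \(x\in\m\). By Nakayama, \(\gamma(I)=\ell(I/\m I)\leq \ell(I/xI)\), using \(xI\subseteq \m I\). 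Since \(x\) is regular, \(0:_I x=0\). Hence \(\ell(I/xI)=\chi_x(I)\), where \(\chi_x(N)=\ell(N/xN)-\ell(0:_N x)\) is the Euler characteristic of multiplication by \(x\) on a finitely generated module \(N\).
\end{enumerate}

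The key facts about \(\chi_x\) are the following. It is defined on finitely generated modules because \(R/xR\) has finite length by Lemma~\ref{lem:artinian_quotient}. It is additive on short exact sequences by the snake lemma. It vanishes on modules of finite length. Finally, it equals the multiplicity symbol \(e(x;N)\geq 0\), since \(x\) is a system of parameters of \(R_\m\) (Serre's result that multiplicities are nonnegative). From the exact sequence \(0\to I\to R\to R/I\to 0\) we then get
\[\chi_x(I)=\chi_x(R)-\chi_x(R/I)\leq \chi_x(R)=\ell(R/xR).\]
This bound is finite and independent of \(I\), so \(\gamma_{R_\m}(I_\m)\leq \ell(R_\m/xR_\m)\) for all ideals \(I\). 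This finishes the proof.

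The step I expect to be the main obstacle is justifying \(\chi_x(R/I)\geq 0\) for ideals \(I\) that contain no regular element, so that \(R/I\) has dimension \(1\). For ideals with \(\textup{Q}(R)I=\textup{Q}(R)\) this is easier. Then \(R/I\) has finite length, and we get the exact equality \(\chi_x(I)=\chi_x(R)\). If the general multiplicity argument proves awkward, I would try the following instead. Filter \(R/I\) by a prime filtration with quotients \(R/\fp\). Each quotient has \(\chi_x(R/\fp)=\ell(R/(\fp+xR))\geq 0\) when \(\fp\neq\m\), since \(x\notin\fp\) by primality and dimension, and \(\chi_x(R/\m)=0\). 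Additivity then gives \(\chi_x(R/I)\geq 0\) directly.
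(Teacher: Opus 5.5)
Your proposal is correct, but for the local step it takes a different route from the paper. The paper reduces to $R$ local via Lemma~\ref{lem:artinian_quotient} and then cites Theorem~3.1.2 of Sally for a uniform bound on the number of generators of ideals. You instead prove that bound directly.

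\begin{itemize}
\item At a non-minimal $\mathfrak{m}$ you take a regular $x\in\mathfrak{m}$ and obtain $\gamma(I)\le\ell(I/xI)=\chi_x(I)=\chi_x(R_\mathfrak{m})-\chi_x(R_\mathfrak{m}/I)\le\ell(R_\mathfrak{m}/xR_\mathfrak{m})$.
\item At a minimal $\mathfrak{m}$ you use that $R_\mathfrak{m}=\textup{Q}(R_\mathfrak{m})$ is Artinian.
\end{itemize}

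The step you flagged as the main obstacle is not actually an obstacle. Your prime-filtration fallback is complete: a prime $\mathfrak{p}\neq\mathfrak{m}$ of $R_\mathfrak{m}$ is minimal, hence consists of zero-divisors, hence does not contain $x$. There is also an elementary alternative that avoids Serre. One has $n\chi_x(N)=\chi_{x^n}(N)\ge-\ell(0:_N x^n)$. The right-hand side is bounded because the chain $0:_N x^n$ stabilizes and has finite length, so $\chi_x(N)\ge 0$.

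Your reduction to finitely many $\mathfrak{m}$ is also slightly more careful than the paper's. Lemma~\ref{lem:artinian_quotient} is stated only for $\mathfrak{a}\subseteq R$, whereas your passage through $R+\mathfrak{a}$ and Lemma~\ref{lem:quotient_length} covers arbitrary fractional $\mathfrak{a}$.

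Comparing the two routes:
\begin{itemize}
\item Your route gives a self-contained elementary argument with an explicit bound. It is essentially the classical estimate for one-dimensional Cohen--Macaulay local rings, which is what $R_\mathfrak{m}$ is at non-minimal $\mathfrak{m}$.
\item The citation gives brevity.
\end{itemize}

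Bounded generation of ideals in fact holds in every Noetherian local ring of dimension at most one. So the hypothesis that $\textup{Q}(R)$ is Artinian is inessential for this statement. Your case (i) is the only place you use it, and it could be avoided by first dividing out the largest ideal of $R_\mathfrak{m}$ of finite length.
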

\begin{proof}
By Lemma~\ref{lem:artinian_quotient} we may assume \(R\) is local, so the theorem reduces to Theorem~3.1.2 of \cite{generators}.
\end{proof}

\begin{lemma}\label{lem:iota_ineq}
Suppose \(R\subseteq S\subseteq\textup{Q}(R)\) are commutative (sub)rings such that \(S\) is Noetherian as and \(R\)-module.
If \(\mathfrak{a}\) is a fractional ideal of \(R\), then \(n_S(S\mathfrak{a})\leq n_R(\mathfrak{a})\). 
\end{lemma}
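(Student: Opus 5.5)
The plan is to compare the two maxima term by term. Take a maximal ideal \(\fn\in\maxspec S\) with \((S\mathfrak{a})_\fn\neq S_\fn\), and an ideal \(J\subseteq S\). We must show \(\gamma_{S_\fn}(J_\fn)\leq n_R(\mathfrak{a})\). If no such \(\fn\) exists, then \(n_S(S\mathfrak{a})\) is a maximum over the empty set and there is nothing to prove.

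First, set \(\fm=\fn\cap R\). By Lemma~\ref{lem:semi-local}, \(\fm\in\maxspec R\). We check that \(\mathfrak{a}_\fm\neq R_\fm\). Otherwise \((S\mathfrak{a})_\fm=S_\fm\mathfrak{a}_\fm=S_\fm\) by Lemma~\ref{lem:arith_loc}. Since \(R\setminus\fm\subseteq S\setminus\fn\), the ring \(S_\fn\) is a localization of \(S_\fm\), so localizing further would give \((S\mathfrak{a})_\fn=S_\fn\), a contradiction. Hence \(\fm\) is one of the maximal ideals appearing in the definition of \(n_R(\mathfrak{a})\).

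Next we turn \(J\) into an ideal of \(R\) without changing its module structure. Since \(S\) is Noetherian as an \(R\)-module, it is a finitely generated \(R\)-submodule of \(\textup{Q}(R)\). As in the proof of Lemma~\ref{lem:quotient_length}, clearing the denominators of finitely many generators gives a regular \(r\in R\) with \(rS\subseteq R\). Put \(I=rJ\subseteq rS\subseteq R\). This is an \(R\)-submodule of \(R\), hence an ideal of \(R\). Multiplication by the regular element \(r\) is injective on \(\textup{Q}(R)\), so \(J\cong I\) as \(R\)-modules, and therefore \(J_\fm\cong I_\fm\) as \(R_\fm\)-modules. By definition of \(n_R(\mathfrak{a})\), the module \(I_\fm\), and hence \(J_\fm\), is generated over \(R_\fm\) by at most \(n_R(\mathfrak{a})\) elements.

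Finally, the same elements generate \(J_\fm\) as an \(S_\fm\)-module. Their images generate the further localization \(J_\fn\) of \(J_\fm\) as an \(S_\fn\)-module. This gives \(\gamma_{S_\fn}(J_\fn)\leq n_R(\mathfrak{a})\), and taking the maximum over \(J\) and \(\fn\) proves the lemma.

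The step needing the most care is the reduction from an \(S\)-ideal to an \(R\)-ideal, that is, producing an ideal \(I\subseteq R\) whose localization is isomorphic to \(J_\fm\). The choice \(I=rJ\) with \(rS\subseteq R\) handles this, because it relies only on the regularity of \(r\). In particular it does not require \(J\) to contain a regular element.
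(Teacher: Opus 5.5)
Your proposal is correct and follows the paper's proof. Both contract a maximal ideal of $S$ to one of $R$ via Lemma~\ref{lem:semi-local}, scale an ideal of $S$ by a regular $r\in R$ into an isomorphic ideal of $R$, and use that generators over $R_\fm$ still generate over $S_\fn$; your choice of $r$ with $rS\subseteq R$, rather than citing Lemma~\ref{lem:quotient_length} for an ideal that need not be fractional, is a slightly cleaner justification of the scaling step.
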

\begin{proof}
Let \(\m\in\maxspec S\) be such that \((S\mathfrak{a})_\m\neq S_\m\). Then \(\p=R\cap \m\in\maxspec R\) with \(\mathfrak{a}_\p\neq R_\p\) by Lemma~\ref{lem:semi-local}.
Let \(I\subseteq S\). 
By Lemma~\ref{lem:quotient_length} there exists some regular \(r\in R\) such that \(rI\subseteq R\).
Since \(S_\m\) is an \(R_\p\)-algebra, we have \(\gamma_{S_\m}(I_\m)\leq \gamma_{R_\p}(I_\p) = \gamma_{R_\p}(rI_\p) \leq n_R(\mathfrak{a})\), from which the lemma follows.
\end{proof}

\begin{lemma}\label{lem:iota_bound_basis}
If \(Z\subseteq R\) are (sub)rings such that \(Z\) is Dedekind and \(R\) is projective as \(Z\)-module of local rank everywhere bounded by some constant \(m\), then \(n(\mathfrak{a})\leq m\) for every fractional ideal \(\mathfrak{a}\) of \(R\). 
\end{lemma}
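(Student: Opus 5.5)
We need to show that for every maximal ideal \(\m\) of \(R\) and every ideal \(I\subseteq R\), the localization \(I_\m\) is generated by at most \(m\) elements over \(R_\m\). The plan is to pass to \(Z\) and use that ideals of a Dedekind domain are locally principal. The condition \(\mathfrak{a}_\m\neq R_\m\) plays no role; we prove the bound for all maximal ideals.

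First, let \(\p=\m\cap Z\) and compare localizations. Since \(R\) is projective, hence flat and torsion-free, over the domain \(Z\), the nonzero elements of \(Z\) are regular on \(R\); this handles the degenerate case where \(\p=0\) (then \(Z_\p\) is a field). In all cases \(R_\p=R\otimes_Z Z_\p\) is a localization of \(R\) in which \(\m\) survives, and \(R_\m\) is a further localization of \(R_\p\). Hence for any \(R\)-module \(M\) we have \(\gamma_{R_\m}(M_\m)\le\gamma_{R_\p}(M_\p)\), and moreover
\[\gamma_{R_\p}(M_\p)\le \gamma_{Z_\p}(M_\p),\]
because \(Z_\p\)-generators are in particular \(R_\p\)-generators.

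Next, take an ideal \(I\subseteq R\). Then \(I_\p\) is a \(Z_\p\)-submodule of \(R_\p\). The module \(R_\p\) is a finitely generated projective module over the local ring \(Z_\p\), so it is free of rank at most \(m\). The ring \(Z_\p\) is a DVR or a field, in particular a PID. A submodule of a free module of rank \(\le m\) over a PID is free of rank \(\le m\), so \(\gamma_{Z_\p}(I_\p)\le m\). Combining this with the previous step gives \(\gamma_{R_\m}(I_\m)\le m\).

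One step needs care: finiteness of the rank. The phrase "local rank bounded by \(m\)" presupposes that \(R\) is locally free of finite rank. If \(R\) were projective but not finitely generated over \(Z\), the localization \(R_\p\) is still free by Kaplansky's theorem, and the rank bound still applies, so the submodule argument over a PID goes through unchanged. The only remaining point to verify is that \(\m\cap Z\) is a prime of \(Z\), which is immediate. I expect no real obstacle.
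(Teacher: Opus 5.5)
Your proposal is correct and follows the paper's route. You localize at \(\p=\m\cap Z\), observe that \(R_\p\) is free of rank at most \(m\) over the PID \(Z_\p\), and conclude that every \(I_\p\) is generated by at most \(m\) elements as a \(Z_\p\)-module; the only difference is that you spell out the reduction \(\gamma_{R_\m}(I_\m)\leq\gamma_{R_\p}(I_\p)\leq\gamma_{Z_\p}(I_\p)\), which the paper compresses into ``it suffices to show'' (your remarks on regularity and Kaplansky's theorem are harmless but not needed).
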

\begin{proof}
It suffices to show that for each prime \(\p\subset Z\) and ideal \(I\subseteq R\) we have \(\gamma_{Z_\p}(I_\p)\leq m\).
Thus we may assume \(Z\) is local, and hence a principal ideal domain.  
Then \(R\) is free of rank at most \(m\).
Moreover, each \(Z\)-submodule of \(R\), in particular each ideal of \(R\), is free of rank at most \(m\).
\end{proof}

The following five lemmata are taken from \cite[Section~4]{lecture}.

\begin{lemma}\label{lem:min_gen}
Let \(R\) be a local commutative ring with maximal ideal \(\fm\) and \(A\) a local commutative \(R\)-algebra with maximal ideal \(A\fm\).
Let \(L\) and \(M\) be finitely generated \(R\)-modules and \(N\) a finitely generated \(A\)-module. 
Then:
\begin{enumerate}[label=\textup{(\roman*)}]
\item \(\gamma_R(L\tensor_R M)=\gamma_R(L)\cdot\gamma_R(M)\);
\item \(\gamma_R(N)=\gamma_R(A)\cdot\gamma_A(N)\);
\item \(\gamma_A(A\tensor_R M)=\gamma_R(M)\).
\end{enumerate}
\end{lemma}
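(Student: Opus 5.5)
All three parts come from Nakayama's lemma: for a finitely generated module \(P\) over a local ring \((T,\fn)\) we have \(\gamma_T(P)=\dim_{T/\fn}(P/\fn P)\). The plan is to turn each claim into a statement about dimensions of vector spaces over the residue fields \(k=\kappa(R)=R/\fm\) and \(K=\kappa(A)=A/A\fm\). Since \(A\) is an \(R\)-algebra with maximal ideal \(A\fm\), we have \(K=A\tensor_R k\), so \(K\) is a field extension of \(k\).

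For (i), use \((L\tensor_R M)\tensor_R k\cong (L\tensor_R k)\tensor_k(M\tensor_R k)\). This is a tensor product of \(k\)-vector spaces of dimensions \(\gamma_R(L)\) and \(\gamma_R(M)\), so its dimension is their product. Also \(L\tensor_R M\) is finitely generated, so Nakayama applies.

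For (iii), the module \(A\tensor_R M\) is finitely generated over \(A\), and
\[ (A\tensor_R M)\tensor_A K\cong M\tensor_R K\cong (M\tensor_R k)\tensor_k K, \]
which has \(K\)-dimension \(\dim_k(M/\fm M)=\gamma_R(M)\).

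For (ii), there is a subtlety: \(\gamma_R(N)\) is only meaningful via Nakayama if \(N\) is finitely generated over \(R\). I would first treat the case where \(\gamma_R(A)\) is infinite as a convention, with both sides infinite, or assume \(A\) is finitely generated over \(R\), which is the intended use. In the finite case, \(N\) is finitely generated over \(R\), and \(N/\fm N=N/(A\fm)N\) is a \(K\)-vector space of dimension \(\gamma_A(N)\). Its \(k\)-dimension is therefore \(\gamma_A(N)\cdot[K:k]\). Finally \([K:k]=\dim_k(A/\fm A)=\gamma_R(A)\), which gives (ii).

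The only real obstacle is this finiteness issue in (ii). If \(A\) is not finite over \(R\), I would argue directly: a generating set of \(N\) over \(R\) maps onto \(N/\fm N\), which has \(k\)-dimension \(\gamma_A(N)\cdot[K:k]\). So \(\gamma_R(N)\ge\gamma_A(N)\gamma_R(A)\), and both sides are infinite whenever \(\gamma_R(A)\) is infinite. The case \(N=0\) is trivial.
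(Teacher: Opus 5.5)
Your Nakayama/residue-field argument is the standard one; the paper itself gives no proof and quotes Lenstra's lecture notes. Parts (i) and (iii) are correct as written, and so is (ii) provided \(A\) is finitely generated as an \(R\)-module. The error is in your last paragraph. There you silently use \([K:k]=\gamma_R(A)\), but that identity is Nakayama applied to \(A\) as an \(R\)-module, so it requires \(A\) to be finitely generated over \(R\).

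In general one only has \(\gamma_A(N)\,[K:k]\leq\gamma_R(N)\leq\gamma_R(A)\,\gamma_A(N)\). The degree \([K:k]=\dim_k(A/\fm A)\) can be finite while \(\gamma_R(A)\) is infinite. So neither your inequality \(\gamma_R(N)\geq\gamma_A(N)\gamma_R(A)\) nor your ``convention'' that both sides are infinite is valid. In fact (ii) is false without a finiteness hypothesis. Take \(R=\Z_{(p)}\), \(A=\Z_p\), which is local with maximal ideal \(pA\), and \(N=A/pA\cong\mathbb{F}_p\). Then \(\gamma_R(N)=\gamma_A(N)=1\), while \(\gamma_R(A)\) is infinite because \(\Z_p\) is uncountable and \(\Z_{(p)}\) is countable.

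So the general case cannot be rescued. The right fix is to add to (ii) the hypothesis that \(A\) is finitely generated as an \(R\)-module, and to drop your last paragraph. This costs nothing downstream. In the only application, Proposition~\ref{prop:local_case}, the algebra \(A\) comes from Lemma~\ref{lem:large_extension} and is free of finite rank \(r=\gamma_R(A)\) over \(R\). There (ii) is used as \(\gamma_R(M^\infty)=r\cdot\gamma_A(M^\infty)\). With that hypothesis your finite-case argument is complete.
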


\begin{lemma}\label{lem:cardinality_unit}
Let \(R\) be a local commutative ring and \(S\) a semi-local commutative \(R\)-algebra. 
Let \(M\subseteq S\) be an \(R\)-submodule such that \(M\not\subseteq \fm\) for all \(\fm\in\maxspec S\). If \(\#\kappa(R)\leq\#\maxspec S\), then \(M\cap S^*\) is non-empty.
\end{lemma}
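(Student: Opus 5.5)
The plan is to reduce to linear algebra over the residue field \(k=\kappa(R)\) and then look for an element of \(M\) that avoids every maximal ideal of \(S\).

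First I would pass to the Jacobson radical. Let \(\fm_R\) be the maximal ideal of \(R\), let \(\maxspec S=\{\fn_1,\dotsc,\fn_t\}\), and let \(J=\bigcap_i\fn_i\). An element \(s\in S\) is a unit if and only if \(s\notin\fn_i\) for every \(i\). By the Chinese remainder theorem \(S/J\cong\prod_{i=1}^t S/\fn_i\).

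Next I would identify the structure over \(k\). For every \(i\) the contraction \(\fn_i\cap R\) is a proper ideal of the local ring \(R\), hence contained in \(\fm_R\). In the situation where the lemma is applied (\(S\) Noetherian over \(R\), Lemma~\ref{lem:semi-local}) it equals \(\fm_R\), so \(\fm_R S\subseteq J\). Consequently \(S/J\) is a \(k\)-algebra. The image \(\overline{M}\) of \(M\) in \(S/J\) is a \(k\)-subspace, and the subspaces \(H_i=\fn_i/J\cap\overline{M}\) are proper, because \(M\not\subseteq\fn_i\). It suffices to find \(v\in\overline{M}\setminus\bigcup_i H_i\): any lift of \(v\) to \(M\) then lies outside every \(\fn_i\), and is therefore a unit.

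The key step, and the main obstacle, is to show that \(\overline{M}\) is not a union of \(t\) proper subspaces. The standard tool is the following fact: a vector space over a field \(k\) is never the union of at most \(\#k\) proper subspaces. To prove it, take \(x\notin H_1\) and \(y\notin\bigcup_{i\ge2}H_i\) (inductively). The \(\#k+1\) vectors \(y+\lambda x\) for \(\lambda\in k\), together with \(x\), pairwise span lines such that each \(H_i\) contains at most one of them. Hence if \(t\le\#k\), some \(y+\lambda x\) or \(x\) lies outside all the \(H_i\).

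This count requires \(t\le\#k\), that is \(\#\maxspec S\le\#\kappa(R)\). The hypothesis as worded, \(\#\kappa(R)\le\#\maxspec S\), supplies only the opposite inequality, and I do not see how to close this step from it. In fact the statement as worded appears to fail. Take \(R=\mathbb{F}_2\), \(S=\mathbb{F}_2^3\), and \(M=\mathbb{F}_2(1,1,0)+\mathbb{F}_2(0,1,1)\). Then \(M\) lies in no coordinate maximal ideal and \(2\le3\), yet
\[M=\{0,(1,1,0),(0,1,1),(1,0,1)\}\]
contains no unit. So the plan above establishes the conclusion exactly when \(t\le\#k\). With only the stated hypothesis, the covering step is where the argument necessarily breaks, and I cannot complete it without the reverse inequality.
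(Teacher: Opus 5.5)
Your counterexample is valid. For \(R=\mathbb{F}_2\), \(S=\mathbb{F}_2^3\) and \(M=\{0,(1,1,0),(0,1,1),(1,0,1)\}\), every hypothesis of the lemma as printed holds, yet \(S^*=\{(1,1,1)\}\) does not meet \(M\). So the inequality in the statement is reversed. The intended hypothesis is \(\#\maxspec S\leq\#\kappa(R)\), and your example shows this bound is sharp.

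The paper gives no proof of its own here: the lemma is one of the five quoted from \cite[Section~4]{lecture}. Its only application, in the proof of Proposition~\ref{prop:local_case}, is made under \(\#\maxspec S_A<\#\kappa(A)\), which confirms your reading.

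For the corrected statement, your avoidance argument is the standard one and it works. A subspace \(H_i\) containing two of \(x\) and the \(y+\lambda x\) (\(\lambda\in k\)) must contain both \(x\) and \(y\), which your choice of \(x,y\) excludes. Hence at most \(t\leq\#k\) of these \(\#k+1\) vectors are covered. If the vectors are not distinct, then \(y\in kx\), and \(x\) itself already avoids every \(H_i\). Your phrase ``pairwise span lines'' should be replaced by this precise claim.

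The one step that does not match the lemma's generality is the passage to \(S/J\) as a \(\kappa(R)\)-algebra. That step needs \(\fm_R S\subseteq\fn\) for every \(\fn\in\maxspec S\), which is not among the hypotheses: take \(R=\Z_{(p)}\) and \(S=\Q\). You only justify it in the setting of Proposition~\ref{prop:local_case}.

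The detour is unnecessary. Run the same argument on the proper \(R\)-submodules \(N_i=M\cap\fn_i\) of \(M\), letting \(\lambda\) range over a set \(\Lambda\subseteq R\) of representatives of \(\kappa(R)\). For distinct \(\lambda,\mu\in\Lambda\), the difference \(\lambda-\mu\) lies in \(R^*\). So \(y+\lambda x\) and \(y+\mu x\) both lying in \(N_i\) again forces \(x\in N_i\) and then \(y\in N_i\). This proves the corrected lemma for every semi-local \(R\)-algebra \(S\), with no reduction modulo the Jacobson radical.
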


\begin{lemma}\label{lem:large_extension}
Let \(R\) be local commutative ring with maximal ideal \(\fm\). 
Then for every \(n\in\Z_{\geq 0}\) there exists a commutative \(R\)-algebra \(A\) which is free of finite rank over \(R\), local with maximal ideal \(A\fm\), and such that \(\#\kappa(A)>n\cdot\rk_R(A)\).
\end{lemma}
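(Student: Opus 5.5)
Given \(n\), we construct \(A\) as \(R[X]/(f)\) for a monic polynomial \(f\in R[X]\) whose reduction \(\bar f\in\kappa(R)[X]\) is irreducible of suitably large degree \(d\). Then \(A\) is free over \(R\) of rank \(d\), with basis \(1,X,\dotsc,X^{d-1}\), so \(\rk_R(A)=d\) and \(A\) is in particular an \(R\)-algebra. The remaining task is to choose \(d\) so that \(\#\kappa(A)>n d\).

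First we show that \(A\) is local with maximal ideal \(A\fm\). We have
\[ A/\fm A\cong \kappa(R)[X]/(\bar f), \]
which is a field because \(\bar f\) is irreducible; hence \(\fm A\) is a maximal ideal of \(A\). Every maximal ideal of \(A\) contains \(\fm A\): since \(A\) is integral (indeed finite) over \(R\), each maximal ideal of \(A\) contracts to a maximal ideal of \(R\) (cf.\ Lemma~\ref{lem:semi-local}), and \(\fm\) is the only one. So \(\fm A\) is the unique maximal ideal of \(A\), and \(\kappa(A)\cong\kappa(R)[X]/(\bar f)\) has cardinality \(q^d\), where \(q=\#\kappa(R)\).

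Next we choose \(d\) and \(f\), splitting into two cases according to whether \(\kappa(R)\) is finite.

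If \(\kappa(R)\) is infinite, take \(d=1\) and \(f=X\), i.e.\ \(A=R\). Then \(\#\kappa(A)\) is infinite, which exceeds \(n\cdot 1\).

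If \(\kappa(R)\) is finite, so \(q\geq 2\), pick \(d\) with \(q^d>nd\); this is possible since \(2^d\) eventually dominates \(nd\). Over the finite field \(\kappa(R)\) there exist irreducible polynomials of every degree, as the field with \(q^d\) elements is simple over \(\mathbb{F}_q\). Take such a monic irreducible \(\bar f\) of degree \(d\) and lift its coefficients arbitrarily to obtain a monic \(f\in R[X]\).

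No step is a serious obstacle. The only point needing care is the locality of \(A\), and that is handled by the contraction of maximal ideals in the integral extension \(R\subseteq A\) as above.
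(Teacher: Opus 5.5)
Your proof is correct and complete. The paper gives no argument of its own for this lemma (it is quoted from \cite{lecture}), and your construction is the standard one: \(A=R[X]/(f)\) with \(f\) a monic lift of an irreducible polynomial over \(\kappa(R)\) of degree \(d\geq 1\) with \(q^d>nd\), and \(A=R\) when \(\kappa(R)\) is infinite.

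One citation should be adjusted. Lemma~\ref{lem:semi-local} assumes the extension is Noetherian as an \(R\)-module, which need not hold here because \(R\) is not assumed Noetherian. The fact you actually use, that maximal ideals contract to maximal ideals along the integral extension \(R\subseteq A\), holds with no Noetherian hypothesis. Alternatively, apply Nakayama to the finitely generated \(R\)-module \(A/\fn\).
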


\begin{lemma}\label{lem:module_has_one}
Let \(R\subseteq S\) be commutative (sub)rings with \(R\) local. 
Let \(M\subseteq S\) be an \(R\)-submodule such that \(1\in M\), and \((M^n:M^n)_S=R\) for some integer \(n+1\geq \gamma_R(\bigcup_{k\geq 0} M^k)\). 
Then \(M=R\).
\end{lemma}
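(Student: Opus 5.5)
The plan is to pass to the ring \(T=\bigcup_{k\geq 0} M^k\) and show that the chain \(M^0\subseteq M^1\subseteq\dotsb\) already reaches \(T\) by step \(n\). The containment \(T\subseteq (M^n:M^n)_S=R\) then forces \(M=R\). Since \(1\in M\), we have \(M^k=M^k\cdot 1\subseteq M^{k+1}\), so the powers form an increasing chain \(R=M^0\subseteq M\subseteq M^2\subseteq\dotsb\). Their union \(T\) is closed under multiplication, so it is a subring of \(S\) containing \(R\). The hypothesis \(n+1\geq\gamma_R(T)\) says in particular that \(g=\gamma_R(T)\) is finite, so \(T\) is a finitely generated \(R\)-module. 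If \(T=0\) everything is trivial, so assume \(T\neq 0\).

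Let \(\fm\) be the maximal ideal of \(R\) and \(\kappa=\kappa(R)\). By Nakayama's lemma, \(T/\fm T\) is a \(\kappa\)-vector space of dimension exactly \(g\). For \(k\geq 0\) let \(V_k\) be the image of \(M^k\) in \(T/\fm T\). This gives an increasing chain \(V_0\subseteq V_1\subseteq\dotsb\) of subspaces. Because \(1\notin\fm T\) (again by Nakayama, as \(T\neq 0\)), we have \(\dim V_0\geq 1\).

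Consider the \(n+2\) subspaces \(V_0,\dotsc,V_{n+1}\). Their dimensions lie between \(1\) and \(g\leq n+1\), so they take at most \(n+1\) distinct values. By pigeonhole, \(V_k=V_{k+1}\) for some \(k\leq n\).

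The key step is that this stabilization propagates. Suppose \(M^{k+1}\subseteq M^k+\fm T\). Multiplying by \(M\) and using \(M\cdot\fm T\subseteq\fm T\) gives
\[M^{k+2}\subseteq M^{k+1}+\fm T\subseteq M^k+\fm T.\]
By induction \(V_j=V_k\) for all \(j\geq k\), hence \(M^k+\fm T=T\). Applying Nakayama to the finitely generated module \(T/M^k\) yields \(M^k=T\). Since \(k\leq n\), it follows that \(T=M^k\subseteq M^n\subseteq T\), so \(M^n=T\).

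Now \(T\cdot M^n=T\cdot T=T=M^n\), so \(T\subseteq (M^n:M^n)_S=R\), and therefore \(T=R\). Finally \(R\subseteq M\subseteq T=R\), which gives \(M=R\).

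The only delicate point is the pigeonhole count: the count \(n+2\) versus \(n+1\) is exactly where the bound \(n+1\geq\gamma_R(T)\) and the fact \(\dim V_0\geq 1\) are used. The finiteness of \(\gamma_R(T)\) is what licenses both applications of Nakayama.
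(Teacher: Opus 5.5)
Your proof is correct and complete. The paper gives no proof of this lemma; it is one of five lemmata quoted from Lenstra's lecture notes \cite{lecture}. Your argument is the standard Dade--Taussky--Zassenhaus one and uses exactly the bound \(n+1\geq\gamma_R(T)\): the images of \(M^0\subseteq M^1\subseteq\dotsb\) in the \(\gamma_R(T)\)-dimensional space \(T/\fm T\) start in dimension at least \(1\), so they stabilize by step \(\gamma_R(T)-1\leq n\); the stabilization then propagates, and Nakayama gives \(M^n=T\subseteq (M^n:M^n)_S=R\).

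The one step you compressed is \(1\notin\fm T\). It uses that \(\fm T\) is an ideal of the ring \(T\): then \(1\in\fm T\) would force \(T=\fm T\), hence \(T=0\), contradicting \(0\neq R\subseteq T\).
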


\begin{lemma}\label{lem:tensor_division}
Let \(R\subseteq S\) be commutative (sub)rings, let \(A\) be a commutative \(R\)-algebra that is free as an \(R\)-module, and let \(M\subseteq S\) be an \(R\)-submodule. 
Then \((M\tensor_R A : M\tensor_R A)_{S\tensor A} = (M:M)_S \tensor_R A\). 
\end{lemma}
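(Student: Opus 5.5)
The plan is to use the fact that \(A\) is free to reduce everything to coordinates. First I fix the identifications. Since \(A\) is free, and in particular flat, over \(R\), the inclusions \(M\subseteq S\) and \((M:M)_S\subseteq S\) stay injective after tensoring with \(A\). So \(M\tensor_R A\) and \((M:M)_S\tensor_R A\) are \(R\)-submodules of the ring \(S\tensor_R A\), and both sides of the claimed equality are subsets of \(S\tensor_R A\).

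Next I choose an \(R\)-basis \((e_i)_{i\in I}\) of \(A\). This gives decompositions
\[ S\tensor_R A=\bigoplus_{i\in I} S\tensor e_i,\qquad M\tensor_R A=\bigoplus_{i\in I} M\tensor e_i,\qquad (M:M)_S\tensor_R A=\bigoplus_{i\in I} (M:M)_S\tensor e_i. \]
Here each \(S\tensor e_i\) is isomorphic to \(S\) via \(s\mapsto s\tensor e_i\). Consequently an element \(\sum_i s_i\tensor e_i\) with \(s_i\in S\) (almost all zero) lies in \(M\tensor_R A\) if and only if \(s_i\in M\) for every \(i\). The analogous statement holds with \(M\) replaced by \((M:M)_S\). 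This uniqueness of coordinates is the key point, and it is where freeness of \(A\) is genuinely used. For a merely flat \(A\) the argument below would break down.

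For the inclusion \(\supseteq\), take \(x\in(M:M)_S\), \(a\in A\), \(m\in M\) and \(b\in A\). Then \((x\tensor a)(m\tensor b)=xm\tensor ab\in M\tensor_R A\). Since \(M\tensor_R A\) is additively generated by the elements \(m\tensor b\), it follows that \(x\tensor a\) lies in \((M\tensor_R A:M\tensor_R A)_{S\tensor A}\). The latter set is an \(R\)-module, so it contains all of \((M:M)_S\tensor_R A\).

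For the inclusion \(\subseteq\), let \(y=\sum_i s_i\tensor e_i\) satisfy \(y\cdot(M\tensor_R A)\subseteq M\tensor_R A\). For each \(m\in M\) we have \(y\cdot(m\tensor 1)=\sum_i s_im\tensor e_i\in M\tensor_R A\). By the coordinate description this gives \(s_im\in M\) for all \(i\). Hence every \(s_i\) lies in \((M:M)_S\), and so \(y\in(M:M)_S\tensor_R A\).

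The only step needing care is the identification of the tensor products with submodules of \(S\tensor_R A\) and the coordinatewise membership criterion. Once these are in place, both inclusions are immediate.
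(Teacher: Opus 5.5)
Your proof is correct and complete. Flatness of \(A\) justifies viewing \(M\tensor_R A\) and \((M:M)_S\tensor_R A\) as submodules of \(S\tensor_R A\). The isomorphism \(S\tensor_R A\cong S^{(I)}\) restricts to \(M\tensor_R A\cong M^{(I)}\), which gives your coordinatewise membership test, and testing \(y\) against the elements \(m\tensor 1\) then yields the inclusion \(\subseteq\).

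The paper does not prove this lemma itself; it is one of the five lemmata quoted from \cite[Section~4]{lecture}. So there is no argument in the paper to compare yours against, and your basis computation is the natural proof.

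As an aside, the statement also holds for merely flat \(A\) when \(M\) is finitely generated, say by \(m_1,\dotsc,m_n\). In that case \((M:M)_S\) is the kernel of the map \(S\to (S/M)^n\) given by \(x\mapsto (xm_j+M)_j\), and flat base change preserves this kernel. For arbitrary \(M\) one would need an infinite product instead, which does not commute with tensoring in general; your use of a basis sidesteps this.
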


\begin{proposition}\label{prop:local_case}
Let \(R\) be a local Noetherian commutative ring of Krull dimension at most \(1\) for which \(\textup{Q}(R)\) is Artinian and let \(\mathfrak{a}\) be a fractional ideal of \(R\). If \(\mathfrak{a}^{n}:\mathfrak{a}^{n}=R\) for some \(n\in\Z_{>0}\) with \(n+1\geq n(\mathfrak{a})\), then \(\mathfrak{a}\) is an invertible ideal.
\end{proposition}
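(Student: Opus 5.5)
The plan is to reduce to Lemma~\ref{lem:module_has_one}. We pass to a finite free local extension \(A\) of \(R\) with a large residue field, so that the extended ideal has a generator that becomes a unit in an overring where it is invertible. We then divide by that generator to obtain a module containing \(1\).

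\emph{Setup.} Let \(\fm\) be the maximal ideal of \(R\). By Theorem~\ref{thm:exist-invertible-extension} there is a ring \(R\subseteq S\subseteq\textup{Q}(R)\), Noetherian as an \(R\)-module, such that \(S\mathfrak{a}\) is invertible. By Lemma~\ref{lem:semi-local} the ring \(S\) is semi-local.

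\emph{Base change.} Apply Lemma~\ref{lem:large_extension} with the integer \(\gamma_R(S)\) to obtain \(A\), free over \(R\), local with maximal ideal \(A\fm\), and with \(\#\kappa(A)>\gamma_R(S)\rk_R(A)\). Put \(S_A=S\tensor_R A\) and \(\mathfrak{a}_A=\mathfrak{a}\tensor_R A\). These sit inside \(\textup{Q}(R)\tensor_R A\subseteq\textup{Q}(A)\), by flatness and because regular elements stay regular. Every maximal ideal of \(S_A\) contains \(\fm S_A\). Moreover \(S_A/\fm S_A\) has length at most \(\gamma_R(S)\rk_R(A)\) over \(R/\fm\), so \(\#\maxspec S_A<\#\kappa(A)\).

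\emph{A unit generator.} The ideal \(S_A\mathfrak{a}_A\) is invertible, being a base change of \(S\mathfrak{a}\). By Proposition~\ref{prop:semi_loc_inv} it equals \(S_Ax\) for a unit \(x\in\textup{Q}(S_A)^*\). Then \(x^{-1}\mathfrak{a}_A\) is an \(A\)-submodule of \(S_A\) generating the unit ideal, so it lies in no maximal ideal of \(S_A\). Lemma~\ref{lem:cardinality_unit}, applied over the local ring \(A\), gives \(y\in\mathfrak{a}_A\) with \(y/x\in S_A^*\); in particular \(y\) is a unit of \(\textup{Q}(A)\). Set \(M=y^{-1}\mathfrak{a}_A\). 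Then \(1\in M\) and \(S_AM=S_A\), so \(M^k\subseteq S_A\) for all \(k\).

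\emph{Colon condition.} By Lemma~\ref{lem:tensor_division} we have \((\mathfrak{a}_A^n:\mathfrak{a}_A^n)=(\mathfrak{a}^n:\mathfrak{a}^n)\tensor_R A=A\). Scaling by the unit \(y^n\) does not change the colon, so \((M^n:M^n)=A\).

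\emph{Generator bound and conclusion.} To invoke Lemma~\ref{lem:module_has_one} we still need \(\gamma_A(T)\leq n+1\), where \(T=\bigcup_k M^k\). Since \(T\subseteq S_A\) is a finitely generated \(A\)-module containing \(A\), some regular \(r\in A\) gives \(rT\subseteq A\), an ideal of \(A\). Thus it suffices to show that ideals of \(A\) need at most \(n_R(\mathfrak{a})\) generators. Granting this, Lemma~\ref{lem:module_has_one} yields \(M=A\), i.e.\ \(\mathfrak{a}_A=Ay\) is invertible. Descent is then formal. By Lemmas~\ref{lem:tensor_hom} and~\ref{lem:arith_loc} the colon commutes with the flat extension, so \((\mathfrak{a}\cdot(R:\mathfrak{a}))\tensor_R A=\mathfrak{a}_A(A:\mathfrak{a}_A)=A\). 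Since \(A\) is free of positive rank, hence faithfully flat, this gives \(\mathfrak{a}(R:\mathfrak{a})=R\).

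\emph{Main obstacle.} The step I expect to be hardest is the generator bound, i.e.\ \(n_A(\mathfrak{a}_A)\leq n_R(\mathfrak{a})\). This does not follow from Lemma~\ref{lem:iota_ineq}, because \(A\) is not inside \(\textup{Q}(R)\), and Lemma~\ref{lem:min_gen}(iii) only handles ideals extended from \(R\). My first attempt would go through Sally's theorem. The uniform bound there is governed by the multiplicity of the local ring, and multiplicity is unchanged under a flat local extension with \(\fm A\) maximal. If that fails, I would instead try to redo the argument avoiding \(A\), working directly with \(R\)-generators of \(T\) via Lemma~\ref{lem:min_gen}(ii).
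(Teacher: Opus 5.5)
Your setup, the unit-generator step, the colon computation and the descent are sound. Normalizing by a generator \(x\) of \(S_A\mathfrak{a}_A\) is a legitimate substitute for the paper's reduction to \(S\mathfrak{a}=S\) via Corollary~\ref{cor:invertible_descent}. Your faithful-flatness descent can replace the paper's use of Lemma~\ref{lem:min_gen}(iii), but it needs the fact that \(\Hom\) of finitely presented modules commutes with flat base change; Lemma~\ref{lem:arith_loc} only covers localization. However, the proof has a genuine gap exactly where you flagged it. You never establish \(\gamma_A(T)\le n+1\), and this is the only place the hypothesis \(n+1\ge n(\mathfrak{a})\) is used, so as written nothing has been proved. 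Your Sally-based idea can be made to work, but it needs more than you state. You would need that \(R\) is Cohen--Macaulay (this does follow from \(\textup{Q}(R)\) Artinian), that \(A\) is Cohen--Macaulay with \(e(A)=e(R)\), and that \(e(R)\le n_R(\mathfrak{a})\), which holds because \(\gamma_R(\fm^k)=e(R)\) for large \(k\).

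The missing idea is elementary; it is your second fallback made concrete. Bound the number of generators over \(R\) rather than over \(A\), then divide by \(r=\rk_R(A)\) using Lemma~\ref{lem:min_gen}(ii). The paper views \(T\subseteq S_A\cong_R S^r\) and filters it by the images under the coordinate projections \(S^r\to S^{r-1}\to\dotsm\to S^0\). Each successive kernel is a finitely generated \(R\)-submodule of \(S\subseteq\textup{Q}(R)\). After scaling by a regular element it is isomorphic to an ideal of \(R\), so it is generated by at most \(n_R(\mathfrak{a})\) elements. Here \(R\) is local and we may assume \(\mathfrak{a}\neq R\), so \(n_R(\mathfrak{a})\) is the maximum of \(\gamma_R(I)\) over all ideals \(I\subseteq R\). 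Subadditivity of \(\gamma_R\) along the filtration gives \(\gamma_R(T)\le r\,n_R(\mathfrak{a})\). Since \(\gamma_R(T)=\gamma_R(A)\gamma_A(T)=r\,\gamma_A(T)\), this yields \(\gamma_A(T)\le n_R(\mathfrak{a})\le n+1\). The same filtration applied to an ideal \(I\subseteq A\cong_R R^r\) proves your reduced claim directly: the kernels are ideals of \(R\), so \(\gamma_R(I)\le r\,n_R(\mathfrak{a})\) and hence \(\gamma_A(I)\le n_R(\mathfrak{a})\).
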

\begin{proof}
Let \(R\subseteq S\subseteq\textup{Q}(R)\) be such that \(S\) is Noetherian as \(R\)-module and \(S\mathfrak{a}\) is an invertible ideal of \(S\), which exists by Theorem~\ref{thm:exist-invertible-extension}. 
Note that \(S\) is semi-local by Lemma~\ref{lem:semi-local}.
There exists by Corollary~\ref{cor:invertible_descent} some invertible ideal \(\mathfrak{b}\) of \(R\) such that \(S\mathfrak{a} = S\mathfrak{b}\).
Note that \((\mathfrak{a}\mathfrak{b}^{-1})^n:(\mathfrak{a}\mathfrak{b}^{-1})^n=\mathfrak{a}^n:\mathfrak{a}^n=R\).
Then we may replace \(\mathfrak{a}\) by \(\mathfrak{a}\mathfrak{b}^{-1}\) to assume without loss of generality that \(S\mathfrak{a} = S\).

Consider now the \(R\)-algebra \(A\) from Lemma~\ref{lem:large_extension} satisfying  \(\#\maxspec S \cdot \rk_R(A) <\#\kappa(A)\), and write \(-_A\) for \(-\tensor_R A\). 
Since \(A\) is free over \(R\), we have \(\mathfrak{a}^{n}_A:\mathfrak{a}^{n}_A=R_A=A\) by Lemma~\ref{lem:tensor_division}. 
As \(S\) is Noetherian, \(S_A\) is free and Noetherian over \(S\).
If \(\mathfrak{a}_A\subseteq \mathfrak{m}\) for some \(\mathfrak{m}\in\maxspec S_A\), then by Lemma~\ref{lem:semi-local} also \(\mathfrak{a}=\mathfrak{a}_A\cap S\) is contained in the maximal ideal \(\mathfrak{m}\cap S\) of \(S\), which is impossible since \(S\mathfrak{a} = S\). 
Lastly, we have \(\#\maxspec S_A \leq \rk_R(A) \cdot \#\maxspec S < \#\kappa(A)\).

Now we may apply Lemma~\ref{lem:cardinality_unit} with \(R_A\), \(S_A\) and \(\mathfrak{a}_A\) in the roles of \(R\), \(S\) and \(M\) respectively, so we may choose some \(u\in \mathfrak{a}_A\cap (S_A)^*\).
Now let \(M=u^{-1} \mathfrak{a}_A\) and \(M^\infty=\bigcup_{k\geq 1} M^k\), and note that \(M^\infty\) is a Noetherian \(R\)-submodule of \(S_A\cong_R S^r\).
The projections \(S_A=S^r\to S^{r-1}\to\dotsm\to S^0\) induce projections \(M^\infty=N_r\to N_{r-1}\to\dotsm\to N_0\). 
Each kernel is isomorphic to a finitely generated \(R\)-submodule of \(\textup{Q}(R)\) and thus can be generated by \(n(\mathfrak{a})\) elements. 
Hence \(\gamma_R(M^\infty)\leq r n(\mathfrak{a})\) and \(\gamma_A(M^\infty)\leq n(\mathfrak{a})\leq n+1\) by Lemma~\ref{lem:min_gen}(ii).
Then by Lemma~\ref{lem:module_has_one} we have \(M=A\).
By Lemma~\ref{lem:min_gen}(iii) we have \(1=\gamma_A(M)=\gamma_A(\mathfrak{a}_A)=\gamma_R(\mathfrak{a})\), so \(\mathfrak{a}=Rx\) for some \(x\in\textup{Q}(R)\). 
As \(\textup{Q}(R)\cdot \mathfrak{a}=\textup{Q}(R)\) we must have that \(x\in\textup{Q}(R)^*\) and thus \(\mathfrak{a}\) is invertible.
\end{proof}

\begin{theorem}\label{thm:main_ideal}
Let \(R\) be a Noetherian commutative ring of Krull dimension at most \(1\) such that \(\textup{Q}(R)\) is Artinian and let \(\mathfrak{a}\) be a fractional ideal of \(R\).
Among all subrings \(R\subseteq S \subseteq\textup{Q}(R)\) such that \(S\mathfrak{a}\) is an invertible ideal of \(S\), there exists a unique minimum with respect to inclusion.
This ring is Noetherian as \(R\)-module and equals \(\mathfrak{a}^{n}:\mathfrak{a}^{n}\) for all \(n+1\geq n(\mathfrak{a})\).
\end{theorem}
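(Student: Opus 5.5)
Fix \(n\) with \(n+1\geq n(\mathfrak{a})\) and set \(T=\mathfrak{a}^n:\mathfrak{a}^n\). I will show three things: \(T\) is a ring that is Noetherian as \(R\)-module; \(T\mathfrak{a}\) is invertible in \(T\); and \(T\subseteq S\) for every ring \(R\subseteq S\subseteq\textup{Q}(R)\) with \(S\mathfrak{a}\) invertible. Together these give existence and uniqueness of the minimum, identify it with \(T\), and show \(T\) is independent of \(n\).

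\emph{Minimality.} Suppose \(S\mathfrak{a}\) is invertible. Take \(x\in T\), so \(x\mathfrak{a}^n\subseteq\mathfrak{a}^n\). Then \(x(S\mathfrak{a})^n\subseteq (S\mathfrak{a})^n\), hence \(x\in (S\mathfrak{a})^n:(S\mathfrak{a})^n\). Since \((S\mathfrak{a})^n\) is invertible, Lemma~\ref{lem:inv_basic} gives \((S\mathfrak{a})^n:(S\mathfrak{a})^n=S\), so \(x\in S\). Note that \(\textup{Q}(S)=\textup{Q}(R)\) here.

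\emph{Finiteness.} By Theorem~\ref{thm:exist-invertible-extension} there is some \(S_0\), Noetherian as \(R\)-module, with \(S_0\mathfrak{a}\) invertible. By the previous paragraph \(T\subseteq S_0\), so \(T\) is Noetherian as \(R\)-module. It is clearly a ring containing \(R\). By Lemma~\ref{lem:semi-local}, maximal ideals of \(T\) lie over maximal ideals of \(R\), and by Lemma~\ref{lem:quotient_length} the quotient \(T/R\) has finite length. Hence \(T\) is again Noetherian of dimension at most \(1\), and \(\textup{Q}(T)=\textup{Q}(R)\) is Artinian.

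\emph{Invertibility of \(T\mathfrak{a}\).} By Lemma~\ref{lem:inv_loc_prop} it suffices to check this at each \(\fm\in\maxspec T\). Where \((T\mathfrak{a})_\fm=T_\fm\) there is nothing to prove. Otherwise I want to apply Proposition~\ref{prop:local_case} to the local ring \(T_\fm\) and the fractional ideal \((T\mathfrak{a})_\fm\). This needs two inputs.

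First, the generator bound. Lemma~\ref{lem:iota_ineq} gives \(n_T(T\mathfrak{a})\leq n_R(\mathfrak{a})\leq n+1\), and localizing at \(\fm\) does not increase this.

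Second, the key identity
\[ (T\mathfrak{a})_\fm^n:(T\mathfrak{a})_\fm^n=T_\fm. \]
Globally, \((T\mathfrak{a})^n=T\mathfrak{a}^n=\mathfrak{a}^n\), because \(T\) stabilizes \(\mathfrak{a}^n\). Hence \((T\mathfrak{a})^n:(T\mathfrak{a})^n=\mathfrak{a}^n:\mathfrak{a}^n=T\). By Lemma~\ref{lem:arith_loc}, colons commute with localization for finitely presented modules over Noetherian \(T\), which gives the local identity.

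With both inputs, Proposition~\ref{prop:local_case} shows \((T\mathfrak{a})_\fm\) is invertible, so \(T\mathfrak{a}\) is invertible.

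The main obstacle is this local step. Two points need care. One must verify that Proposition~\ref{prop:local_case} applies to \(T_\fm\), in particular that \(\textup{Q}(T_\fm)\) is Artinian; this follows from Lemma~\ref{lem:localize_fractional_ideal}, since \(\textup{Q}(T)_\fm\) is a localization of an Artinian ring. One must also keep the definition of \(n(\cdot)\) consistent under localization: the maximal ideals \(\fm\) with \((T\mathfrak{a})_\fm\neq T_\fm\) are exactly those counted in \(n_T(T\mathfrak{a})\).

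The edge case \(n=0\), where \(n(\mathfrak{a})\leq1\), needs a separate check. Here \(\mathfrak{a}^0:\mathfrak{a}^0=R\), so the claim is that \(\mathfrak{a}\) is already invertible. If \(n(\mathfrak{a})=0\), then no \(\fm\) has \(\mathfrak{a}_\fm\neq R_\fm\), so \(\mathfrak{a}=R\). If \(n(\mathfrak{a})=1\), the local rings where \(\mathfrak{a}\) is nontrivial have all ideals principal, and \(\mathfrak{a}_\fm\) is a fractional ideal generated by one element, hence invertible. Alternatively, the proposition's proof goes through for \(n=0\) via Lemma~\ref{lem:module_has_one}.
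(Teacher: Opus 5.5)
Your proof is correct and follows the paper's argument. You set \(T=\mathfrak{a}^n:\mathfrak{a}^n\), use \((T\mathfrak{a})^n=\mathfrak{a}^n\), Lemma~\ref{lem:arith_loc}, Lemma~\ref{lem:iota_ineq} and Proposition~\ref{prop:local_case} at each maximal ideal of \(T\) to get invertibility, and Lemma~\ref{lem:inv_basic} for minimality; your extra checks (Noetherianity of \(T\) via Theorem~\ref{thm:exist-invertible-extension}, the hypotheses of Proposition~\ref{prop:local_case} for \(T_\fm\), and the edge case \(n=0\), which that proposition, stated for \(n>0\), does not literally cover) are details the paper leaves implicit.
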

\begin{proof}
Let \(S=\mathfrak{a}^n:\mathfrak{a}^n\) for some $n+1\geq n(\mathfrak{a})$ and let \(\mathfrak{b}=S\mathfrak{a}\). 
We will show that \(\mathfrak{b}\) is invertible.
Note that \(S\) is Noetherian as \(R\)-module, and that we have \(\mathfrak{b}^n=S\mathfrak{a}^n=\mathfrak{a}^n\) and \(\mathfrak{b}^n:\mathfrak{b}^n=\mathfrak{a}^n:\mathfrak{a}^n=S\).
Let \(\fm\in\maxspec S\).
From Lemma~\ref{lem:arith_loc} we deduce that \(\mathfrak{b}_\fm^n:\mathfrak{b}_\fm^n=S_\fm\).
We have \(n(\mathfrak{b})\leq n(\mathfrak{a})\leq n+1\) by Lemma~\ref{lem:iota_ineq}, so \(\mathfrak{b}_\fm\) is invertible by Proposition~\ref{prop:local_case}.
Hence \(\mathfrak{b}\) is invertible by Lemma~\ref{lem:inv_loc_prop}.

Suppose \(R\subseteq T \subseteq \textup{Q}(R)\) is such that \(T\mathfrak{a}\) is an invertible ideal of \(T\). Then \(T\cdot (\mathfrak{a}^n:\mathfrak{a}^n) \subseteq (T\mathfrak{a})^n:(T\mathfrak{a})^n = T\) by Lemma~\ref{lem:inv_basic}, so \(\mathfrak{a}^n:\mathfrak{a}^n\subseteq T\). Hence \(\mathfrak{a}^n:\mathfrak{a}^n\) is the unique minimum.  
\end{proof}

Using the theory of this section, we can construct two functorial polynomial-time algorithms. 
Recall that an order is a commutative ring \(R\) which is free of finite rank as \(\Z\)-module, and in particular that $R$ is allowed to contain zero-divisors.
Note that for an order $R$ we have $\Q R=Q(R)$. 

\begin{theorem}\label{thm:blowup}
There exists a polynomial-time algorithm that, given an order \(R\) and a fractional \(R\)-ideal \(\mathfrak{a}\), computes the unique minimal order \(R\subseteq S\subseteq \Q R\) such that \(S\mathfrak{a}\) is invertible.
\end{theorem}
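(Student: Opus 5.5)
Apply Theorem~\ref{thm:main_ideal} to the order \(R\) and the fractional ideal \(\mathfrak{a}\). An order is Noetherian of Krull dimension at most \(1\), and \(\textup{Q}(R)=\Q R\) is a finite-dimensional \(\Q\)-algebra, hence Artinian. So the theorem applies, and the minimal \(S\) exists and equals \(\mathfrak{a}^k:\mathfrak{a}^k\) for every \(k\) with \(k+1\geq n(\mathfrak{a})\).

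To get an explicit \(k\), let \(r\) be the rank of \(R\) as a \(\Z\)-module. Lemma~\ref{lem:iota_bound_basis} with \(Z=\Z\) gives \(n(\mathfrak{a})\leq r\), so we may take \(k=\max(r-1,1)\). Then \(k\) is bounded by the input size, i.e.\ it is effectively given in unary.

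The algorithm is as follows.
\begin{enumerate}
\item Compute \(\mathfrak{a}^k\) by \(k-1\) successive multiplications of fractional ideals.
\item Compute the colon \(\mathfrak{a}^k:\mathfrak{a}^k\). This is a subgroup of \(\Q R\) that is free of rank \(r\).
\item Compute its multiplication table with respect to a \(\Z\)-basis by expressing products of basis elements in that basis.
\end{enumerate}
All these steps reduce to linear algebra over \(\Z\) (Hermite normal form or kernel computations), as noted in Section~\ref{sec:algorithmic_clich_e}.

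The main obstacle is bounding the sizes of the intermediate ideals \(\mathfrak{a}^j\) so that repeated multiplication does not cause coefficient blowup. After scaling by a common denominator \(d\), we may assume \(\mathfrak{a}\subseteq R\) with \(d\) of polynomial size, so \(d^j\mathfrak{a}^j\) is scaled by a factor whose bit length is \(j\log d\), which is polynomial. Moreover \(x^jR\subseteq\mathfrak{a}^j\subseteq R\) for a regular \(x\in\mathfrak{a}\cap R\), so the index \([R:\mathfrak{a}^j]\) divides a power of \(|N(x)|\) bounded by \(|N(x)|^{j}\). Its logarithm is therefore polynomial, and the LLL reduction of Section~\ref{sec:algorithmic_clich_e} keeps the representations of polynomial size. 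The same index bound controls \(\mathfrak{a}^k:\mathfrak{a}^k\), since it lies between \(R\) and \(x^{-k}R\).

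Functoriality is automatic: \(S\) is intrinsically defined as the minimum ring, so any isomorphism \(f\colon(R,\mathfrak{a})\to(R',\mathfrak{a}')\) maps \(S\) onto \(S'\). The induced morphism is \(f\) itself, and computing it requires only rewriting the matrix of \(f\) in the new bases.
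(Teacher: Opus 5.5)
Your proposal is correct and is essentially the paper's proof: bound \(n(\mathfrak{a})\leq\rk_\Z(R)\) via Lemma~\ref{lem:iota_bound_basis}, compute the powers \(\mathfrak{a}^j\) up to \(k\approx\rk_\Z(R)-1\), and return \(\mathfrak{a}^k:\mathfrak{a}^k\), which is the minimal order by Theorem~\ref{thm:main_ideal}. Two of your details are left implicit in the paper and are worth having: the index bound \(x^jR\subseteq\mathfrak{a}^j\subseteq R\) (after scaling \(\mathfrak{a}\) into \(R\)) that controls coefficient growth, and the choice \(k\geq 1\), which keeps you within the hypothesis \(n>0\) of Proposition~\ref{prop:local_case} used to prove Theorem~\ref{thm:main_ideal}.
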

\begin{proof}
We iteratively compute \(\mathfrak{a}^k\) for \(1\leq k\leq n = \rk_\Z(R)-1\) and return \(\mathfrak{a}^n:\mathfrak{a}^n\). 
By Lemma~\ref{lem:iota_bound_basis} and Theorem~\ref{thm:main_ideal} the output is correct.
\end{proof}

\begin{example}
Although they would be unusual rings to consider in the context of algorithms, Theorem~\ref{thm:main_ideal} and hence an analogue to Theorem~\ref{thm:blowup} apply to some rings that are not free as $\Z$-module, such as $\Z\times(\Z/5\Z)$ or $\Z[\varepsilon]/(5,\varepsilon^2)$. Note that there are also commutative rings which are finitely generated as $\Z$-module to which Theorem~\ref{thm:main_ideal} does not apply, such as $R=\Z[\varepsilon]/(5\varepsilon,\varepsilon^2)$, because $Q(R)$ needs to be Artinian. 
\end{example}

A result we will now prove and that to the author's knowledge is new is an algorithm to detect and remove torsion in $\mathcal{I}(R)$.

\begin{lemma}\label{lem:torsion_equiv}
Let $R$ be a commutative ring and let $\mathfrak{a}\in\mathcal{I}(R)$.
If $\mathfrak{a}$ is torsion, then there exists some subring $R\subseteq S\subseteq\textup{Q}(R)$ such that $S$ is finitely generated as $R$-module and such that $S\mathfrak{a}=S$.
If $R$ is an order, then the converse holds.
\end{lemma}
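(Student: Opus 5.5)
For the forward direction, suppose \(\mathfrak{a}^\ell=R\) with \(\ell>0\). I would take
\[ S=\sum_{k=0}^{\ell-1}\mathfrak{a}^k. \]
Since \(\mathfrak{a}\) is invertible, it is finitely generated by Lemma~\ref{lem:inv_basic}, so each \(\mathfrak{a}^k\) is finitely generated and \(S\) is a finitely generated \(R\)-module. It contains \(\mathfrak{a}^0=R\). It is closed under multiplication because \(\mathfrak{a}^i\mathfrak{a}^j=\mathfrak{a}^{i+j}\), and whenever \(i+j\ge\ell\) we have \(\mathfrak{a}^{i+j}=\mathfrak{a}^{i+j-\ell}\). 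Hence \(S\) is a subring of \(\textup{Q}(R)\). Finally,
\[ S\mathfrak{a}=\sum_{k=1}^{\ell}\mathfrak{a}^k=\sum_{k=0}^{\ell-1}\mathfrak{a}^k=S, \]
using \(\mathfrak{a}^\ell=\mathfrak{a}^0\). This direction is essentially formal.

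For the converse, let \(R\) be an order and \(S\) such a ring. Then \(S\) is finitely generated over \(R\), hence finitely generated and torsion-free as a \(\Z\)-module, so \(S\) is an order with \(\Q S=\Q R\). The plan is to compare \(\mathfrak{a}\) and its inverse \(\mathfrak{a}^{-1}\) through \(S\). From \(S\mathfrak{a}=S\) we get \(\mathfrak{a}\subseteq S\) and \(\mathfrak{a}^{-1}\subseteq S\mathfrak{a}^{-1}=S\), so every power \(\mathfrak{a}^k\) with \(k\in\Z\) lies in \(S\). Choose a nonzero integer \(d\) with \(dS\subseteq R\), which exists since \(S/R\) is a finitely generated torsion group. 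Then \(dR\subseteq d\,\mathfrak{a}^k\subseteq dS\subseteq R\) for all \(k\in\Z\), where the first inclusion holds because \(\mathfrak{a}^k\supseteq\) the \(R\)-span of \(\mathfrak{a}^k\mathfrak{a}^{-k}\)-witnesses; more simply, \(\mathfrak{a}^{-k}\subseteq S\) gives \(R=\mathfrak{a}^k\mathfrak{a}^{-k}\subseteq \mathfrak{a}^k S\), and multiplying by \(d\) yields \(dR\subseteq \mathfrak{a}^k(dS)\subseteq\mathfrak{a}^k R=\mathfrak{a}^k\).

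So the invertible ideals \(\mathfrak{a}^k\) are sandwiched between \(\tfrac1d\cdot dR=R\)-scaled lattices: \(dR\subseteq d\mathfrak{a}^k\subseteq R\). There are only finitely many subgroups of \(R\) containing \(dR\), since \(R/dR\) is finite. By pigeonhole, \(\mathfrak{a}^i=\mathfrak{a}^j\) for some \(i<j\). Because \(\mathfrak{a}\) is invertible, this gives \(\mathfrak{a}^{j-i}=R\), so \(\mathfrak{a}\) is torsion.

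The main point needing care is the inclusion \(dR\subseteq d\mathfrak{a}^k\), i.e.\ \(R\subseteq\mathfrak{a}^k S\) combined with \(dS\subseteq R\). This comes out cleanly once we note that both \(\mathfrak{a}^{k}\) and \(\mathfrak{a}^{-k}\) lie in \(S\). Everything else is routine.
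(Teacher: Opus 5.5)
Your proof follows the paper's route. In the forward direction you write out the ring generated by $R$ and $\mathfrak{a}$ explicitly, which also makes its finite generation visible. In the converse you trap all powers $\mathfrak{a}^k$ between a fixed full-rank lattice and $S$, apply pigeonhole, and cancel using invertibility. The paper uses the conductor $\mathfrak{f}=R:S$ as the lower lattice, via $\mathfrak{f}=\mathfrak{f}\mathfrak{a}^k\subseteq\mathfrak{a}^k\subseteq S$ for $k\geq 0$; you use $dR$ instead.

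There is, however, one misstated step. The chain $dR\subseteq d\,\mathfrak{a}^k\subseteq dS\subseteq R$ is false as written. Its first inclusion is equivalent to $R\subseteq\mathfrak{a}^k$, and taking $k=\pm1$ this forces $\mathfrak{a}=R$. For a concrete counterexample, $R=\Z[2\ii]$, $S=\Z[\ii]$, $\mathfrak{a}=\ii R$ satisfies $S\mathfrak{a}=S$ but $1\notin\mathfrak{a}$. What your justification actually proves is $dR\subseteq\mathfrak{a}^k$. So the correct sandwich is $d^2R\subseteq d\,\mathfrak{a}^k\subseteq dS\subseteq R$, or equivalently $dR\subseteq\mathfrak{a}^k\subseteq S\subseteq\tfrac1d R$. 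Pigeonhole over the finitely many subgroups between $d^2R$ and $R$ then finishes exactly as you intended.

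Negative powers are not needed either. For $k\geq 0$ one has directly $dS=(dS)\mathfrak{a}^k\subseteq R\,\mathfrak{a}^k=\mathfrak{a}^k$. This is the paper's conductor argument, using $dS\subseteq\mathfrak{f}$.
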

\begin{proof}
Suppose $\mathfrak{a}^\ell=R$ for $\ell>0$ and let $S$ be the ring generated by $R$ and $\mathfrak{a}$.
Then $S\supseteq S\mathfrak{a} \supseteq \dotsm \supseteq S\mathfrak{a}^\ell = S$ and hence $S=S\mathfrak{a}$ hold. 

Suppose now that $R$ is an order and that $S\subseteq\textup{Q}(R)$ satisfies and $S\mathfrak{a}=S$. 
Write $\mathfrak{f}=R:S$, which is a fractional ideal of $S$ contained in $R$, and note that for each $k\geq0$ we have $\mathfrak{f}=\mathfrak{f}\mathfrak{a}^k\subseteq\mathfrak{a}^k\subseteq S$.
If $S$ is finitely generated over $R$, then $S/\mathfrak{f}$ is finite, so $\mathfrak{a}^k$ can take only finitely many values as $k$ varies. Invertibility of $\mathfrak{a}$ then implies $\mathfrak{a}$ must be torsion.
\end{proof}

\begin{proposition}\label{prop:torsion}
There exists a polynomial-time algorithm that, given an order \(R\) and an invertible ideal \(\mathfrak{a}\) of \(R\), decides whether \(\mathfrak{a}\) is torsion, and if so computes the minimal order \(R\subseteq S\subseteq\Q R\) such that \(S\mathfrak{a}=S\).
\end{proposition}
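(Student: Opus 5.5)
Given an order \(R\) and an invertible ideal \(\mathfrak{a}\), I would compute a candidate order directly and then check whether it works. The natural candidate is the ring \(S=R[\mathfrak{a},\mathfrak{a}^{-1}]\) generated by \(R\), \(\mathfrak{a}\) and \(\mathfrak{a}^{-1}=R:\mathfrak{a}\). This is the smallest subring of \(\Q R\) containing \(R\) for which \(S\mathfrak{a}=S\). Indeed, if \(S\mathfrak{a}=S\), then \(\mathfrak{a}^{-1}\subseteq \mathfrak{a}^{-1}S=\mathfrak{a}^{-1}(S\mathfrak{a})=S\), since \(\mathfrak{a}^{-1}\mathfrak{a}=R\), and clearly \(\mathfrak{a}\subseteq S\mathfrak{a}=S\). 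Conversely, once \(S\supseteq\mathfrak{a},\mathfrak{a}^{-1}\) we get \(S=S\mathfrak{a}\mathfrak{a}^{-1}\subseteq S\mathfrak{a}\subseteq S\). So whenever some \(S\) with \(S\mathfrak{a}=S\) exists, the ring \(R[\mathfrak{a},\mathfrak{a}^{-1}]\) is the minimal one. By Lemma~\ref{lem:torsion_equiv}, \(\mathfrak{a}\) is torsion if and only if this ring is finitely generated over \(R\), equivalently an order, i.e.\ has finite index over \(R\) in \(\Q R\).

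The algorithm builds this ring by an increasing chain. Set \(S_0=R\), and given \(S_i\) let \(S_{i+1}\) be the ring generated by \(S_i\cdot(\R+\mathfrak{a}+\mathfrak{a}^{-1})\); concretely \(S_{i+1}=S_i'\cdot S_i'\) with \(S_i'=S_i+S_i\mathfrak{a}+S_i\mathfrak{a}^{-1}\), iterated until multiplicatively closed. I would rather use the cleaner chain \(T_k=(R+\mathfrak{a}+\mathfrak{a}^{-1})^k\). Each \(T_k\) is a fractional ideal, and \(T_k\subseteq T_{k+1}\) because \(1\in R+\mathfrak{a}+\mathfrak{a}^{-1}\). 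The union of the \(T_k\) is \(R[\mathfrak{a},\mathfrak{a}^{-1}]\). If \(T_k=T_{k+1}\), then \(T_k\) is a ring, namely the union, and we stop. We can compute \(T_{k+1}=T_k\cdot(R+\mathfrak{a}+\mathfrak{a}^{-1})\) and test equality in polynomial time per step. The key issue is bounding the number of steps and the sizes of the numbers involved.

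In the torsion case the chain is strictly increasing until it stabilizes at an order \(S\). By Lemma~\ref{lem:quotient_length}, the conductor satisfies \(\mathfrak{f}=R:S\subseteq R\), and every \(T_k\) lies between \(R\) and \(S\). Hence the number of strict steps is at most \(\log_2[S:R]\), and all \(T_k\) have index bounded in terms of \([S:R]\). So the main obstacle is to bound \(\log[S:R]\) polynomially in the input size, and also to detect the non-torsion case, where the chain never stabilizes.

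For the bound I would use the proof of Lemma~\ref{lem:torsion_equiv}, which shows \(\mathfrak{f}\subseteq\mathfrak{a}^k\subseteq S\) for all \(k\). I would try to show that \([S:R]\) divides a power of, or is bounded by, an explicit quantity such as \([R:\mathfrak{c}]\cdot[\mathfrak{b}:R]\) with \(\mathfrak{b},\mathfrak{c}\) as in Lemma~\ref{lem:ideal_as_fraction} (after passing to the blowup of \(R+\mathfrak{a}\) if necessary, via Theorem~\ref{thm:blowup}). If no such direct bound emerges, I would argue locally. Torsion means \(\mathfrak{a}_\fm=R_\fm u\) with \(u^\ell\in R_\fm^*\), so \(u\) is integral over \(R\) and \(S\) lies in the integral closure. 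Then \(\log[S:R]\le\log[\mathcal{O}:R]\le \tfrac12\log|\mathrm{disc}(R)|\), which is polynomial in the input, at least when \(\Q R\) is reduced; nilpotents would need separate care. With this bound \(B\), the algorithm runs the chain for at most \(B+1\) steps. If it stabilizes, it outputs \(T_k\) and answers "torsion". Otherwise, or as soon as some \(T_k\) has index exceeding \(2^B\), it answers "not torsion". LLL reduction keeps the coefficient sizes bounded by the index, so every step runs in polynomial time.
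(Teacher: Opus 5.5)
\textbf{There is a genuine gap: the a priori bound on $[S:R]$ is missing for orders with nilpotents.}

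Your setup is correct. The ring $R[\mathfrak{a},\mathfrak{a}^{-1}]=\bigcup_k T_k$ is the minimal ring with $S\mathfrak{a}=S$. By Lemma~\ref{lem:torsion_equiv} the chain stabilizes exactly in the torsion case, and then after at most $\log_2[S:R]$ strict steps. But the algorithm hinges on an a priori polynomial bound $B\ge\log_2[S:R]$, valid whenever $\mathfrak{a}$ is torsion. Without it you can never certify that $\mathfrak{a}$ is \emph{not} torsion.

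You only supply such a bound when $\Q R$ is reduced, and the orders in this paper are explicitly allowed to contain nilpotents. In that case the integral-closure bound is vacuous: the integral closure contains the $\Q$-vector space $\mathrm{nil}(\Q R)$, and $\mathrm{disc}(R)=0$. For instance, take $R=\Z[\varepsilon]/(\varepsilon^2)$ and $\mathfrak{a}=(1+\varepsilon/q)R$. Then $\mathfrak{a}^q=(1+\varepsilon)R=R$ and the minimal $S$ is $\Z+\Z\varepsilon/q$, yet the integral closure $\Z+\Q\varepsilon$ has infinite index over $R$. So ``nilpotents would need separate care'' is precisely the missing step. Your other route, bounding $[S:R]$ via $[R:\mathfrak{c}]\cdot[\mathfrak{b}:R]$, is only stated as a hope.

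\textbf{The missing idea (the paper's proof).} In the torsion case the minimal $S$ \emph{equals} the blowup $S_0$ of $R+\mathfrak{a}$ from Theorem~\ref{thm:blowup}.
\begin{itemize}
\item Since $S(R+\mathfrak{a})=S$ is invertible, Theorem~\ref{thm:main_ideal} gives $S_0\subseteq S$.
\item Conversely, Lemma~\ref{lem:ideal_as_fraction} applied in $S_0$ gives $S_0\mathfrak{a}=\mathfrak{b}:\mathfrak{c}$ with $\mathfrak{b},\mathfrak{c}\subseteq S_0$ invertible and $\mathfrak{b}+\mathfrak{c}=S_0$.
\item If $(S_0\mathfrak{a})^\ell=S_0$, then $\mathfrak{b}^\ell=\mathfrak{c}^\ell$. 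Since $\mathfrak{b}^\ell+\mathfrak{c}^\ell=S_0$, both equal $S_0$, which forces $\mathfrak{b}=\mathfrak{c}=S_0$ and hence $S_0\mathfrak{a}=S_0$.
\end{itemize}
So the algorithm is a single blowup followed by the test $S_0\mathfrak{a}=S_0$, with Lemma~\ref{lem:torsion_equiv} giving the converse.

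This also yields the bound you were missing. Put $M=(R+\mathfrak{a})^n$ with $n=\rk_\Z R-1$. Since $1\in M$, we have $S_0=M:M\subseteq M$. If $d(R+\mathfrak{a})\subseteq R$ for some $d\in\Z_{>0}$, then $\log_2[S:R]\le n\cdot\rk_\Z R\cdot\log_2 d$. With this $B$ your chain would work, but it is then redundant.
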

\begin{proof}
We compute using Theorem~\ref{thm:blowup} the minimal order \(R\subseteq S\subseteq\Q R\) where \(R+\mathfrak{a}\) becomes invertible. We claim \(\mathfrak{a}\) is torsion if and only if \(S\mathfrak{a}=S\).

(\(\Rightarrow\)) 
As \(S\mathfrak{a}\) is the quotient of \(\mathfrak{b}=(S+(S\mathfrak{a})^{-1})^{-1}\) and \(\mathfrak{c}=(S+S\mathfrak{a})^{-1}\) as in Lemma~\ref{lem:ideal_as_fraction} with \(\mathfrak{b}+\mathfrak{c}=S\), the ideal \(S\mathfrak{a}\) is torsion if and only if \(\mathfrak{b}\) and \(\mathfrak{c}\) are.
However, since \(\mathfrak{b},\mathfrak{c}\subseteq S\), this is only possible if \(\mathfrak{b}=\mathfrak{c}=S\). 
Hence \(S\mathfrak{a}=S : S=S\).

(\(\Leftarrow\))
This follows from Lemma~\ref{lem:torsion_equiv}.

Finally, suppose that \(\mathfrak{a}\) is torsion and \(R\subseteq T\subseteq\Q R\) is an order such that \(T\mathfrak{a}=T\). Then \(T(R+\mathfrak{a})=T\) is invertible over \(T\), so \(S\subseteq T\). Hence \(S\) is the unique minimal order where \(S\mathfrak{a}=S\).
\end{proof}

\section{Factor refinement}\label{sec:factor_refinement}

In this section we similarly generalize the results of Ge \cite{Ge} and Buchmann--Eisenbrand \cite{Buchmann-Eisenbrand} on factorizations of fractional ideals to include the case where the ring is allowed to contain zero divisors.
For this we use the generalized blowup from Section~\ref{sec:blowing_up}.
One can prove the main result in this section by following \cite{Buchmann-Eisenbrand}, mutatis mutandis.
To prevent this section from being a mechanical verification of this proposed approach, we offer instead an alternative proof, following \cite{thesis}.

We consider a commutative ring $R$ and invertible ideals contained within it.
Any two such ideals are called \emph{coprime} if their sum equals $R$, and we leave coprimality for non-invertible ideals, and for invertible ideals not contained in $R$, undefined.
It is easy to see that for a set \(C\) of pairwise coprime invertible ideals strictly contained in \(R\) the natural map \(\Z^{(C)}\to\mathcal{I}(R)\) is injective with image \(\langle C\rangle\) and that \(\langle C\rangle\) is closed under addition. In fact, \(\Z^{(C)}\to\langle C\rangle\) is an isomorphism of partially ordered groups. 

\begin{lemma}\label{lem:trial_division}
There exists a polynomial-time algorithm that, given an order \(R\) and a finite set \(C\) of pairwise coprime invertible ideals strictly contained in \(R\) and some invertible ideal \(\mathfrak{a}\) of \(R\), decides whether \(\mathfrak{a}\) is in the image of the injection \(\Z^{(C)}\to \mathcal{I}(R)\) and if so computes the preimage.
\end{lemma}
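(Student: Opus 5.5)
The plan is to do trial division, using that \(\Z^{(C)}\to\langle C\rangle\) is an isomorphism of partially ordered groups and that each \(\mathfrak{c}\in C\) is strictly contained in \(R\). Write \(C=\{\mathfrak{c}_1,\dotsc,\mathfrak{c}_t\}\). First reduce to integral ideals. Apply Lemma~\ref{lem:ideal_as_fraction}, or more cheaply compute \(\mathfrak{a}\cap R\) and \(R:(R+\mathfrak{a})\), to write \(\mathfrak{a}=\mathfrak{b}\mathfrak{c}^{-1}\) with \(\mathfrak{b}=\mathfrak{a}\cap R\) and \(\mathfrak{c}=R\cap\mathfrak{a}^{-1}\). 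Justifying this decomposition requires invertibility of \(R+\mathfrak{a}\), which we do not know in general. The safer route is to avoid it: pick a positive integer \(d\) with \(d\mathfrak{a}\subseteq R\) (from the encoding), and note \(dR\) need not lie in \(\langle C\rangle\).

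Instead I would work directly with valuations. For \(\mathfrak{a}\in\langle C\rangle\) write \(\mathfrak{a}=\prod_i\mathfrak{c}_i^{e_i}\). The exponent \(e_i\) is determined locally, since coprimality means that at each maximal ideal containing \(\mathfrak{c}_i\) the other \(\mathfrak{c}_j\) are trivial. The exponents are also bounded: taking \([R/\cdot]_R\) lengths (Lemma~\ref{lem:local_length}, Proposition~\ref{prop:invertibility_defined_locally}) we get \(\ell(R/\mathfrak{c}_i)\geq 1\). Moreover \(\ell(R/\mathfrak{b})=\sum_i e_i\,\ell(R/\mathfrak{c}_i)\) for \(\mathfrak{b}\) integral in \(\langle C\rangle\), and \(\ell(R/\mathfrak{b})\leq\log_2\#(R/\mathfrak{b})\), which is polynomial in the input size.

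The algorithm then runs as follows. Compute \(\mathfrak{b}=\mathfrak{a}\cap R\) and \(\mathfrak{b}'=\mathfrak{a}^{-1}\cap R\). For each \(i\), repeatedly test whether \(\mathfrak{b}\subseteq\mathfrak{c}_i\), and if so replace \(\mathfrak{b}\) by \(\mathfrak{b}\mathfrak{c}_i^{-1}=\mathfrak{b}(R:\mathfrak{c}_i)\), counting the steps \(f_i\). Do the same for \(\mathfrak{b}'\), giving \(g_i\). Each division strictly decreases \(\#(R/\mathfrak{b})\), by at least a factor \(2\), via Proposition~\ref{prop:invertibility_defined_locally}-style length additivity for invertible ideals. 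So the total number of divisions is at most \(\log_2\#(R/\mathfrak{b})\), and the indices stay bounded, so LLL reduction keeps all sizes polynomial. Finally form \(\prod_i\mathfrak{c}_i^{f_i-g_i}\). Since exponents are given in unary through the loop count, this is computed by repeated multiplication in polynomial time. Compare it with \(\mathfrak{a}\): if they are equal, output \((f_i-g_i)_i\), otherwise answer no.

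The main obstacle is correctness of the answer no, i.e. showing that if \(\mathfrak{a}=\prod\mathfrak{c}_i^{e_i}\) then \(f_i-g_i=e_i\). Write \(\mathfrak{a}=\mathfrak{p}\mathfrak{q}^{-1}\) with \(\mathfrak{p}=\prod_{e_i>0}\mathfrak{c}_i^{e_i}\) and \(\mathfrak{q}=\prod_{e_i<0}\mathfrak{c}_i^{-e_i}\). These are coprime and integral, so \(\mathfrak{p}\cap\mathfrak{q}=\mathfrak{p}\mathfrak{q}\). Hence \(\mathfrak{a}\cap R=\mathfrak{p}\mathfrak{q}^{-1}\cap R\) equals \(\mathfrak{p}\); this is checked locally, where one of \(\mathfrak{p},\mathfrak{q}\) is trivial and the other is principal by a regular element in the semilocal ring. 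Likewise \(\mathfrak{a}^{-1}\cap R=\mathfrak{q}\). Trial division of \(\mathfrak{p}\) by \(\mathfrak{c}_i\) stops exactly after \(e_i\) steps, because containment in \(\mathfrak{c}_i\) of an element of \(\langle C\rangle\cap\{\text{integral}\}\) is detected by the partial order isomorphism. So \(f_i=\max(e_i,0)\) and \(g_i=\max(-e_i,0)\), and the final equality check makes the algorithm correct in both directions.
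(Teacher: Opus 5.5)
Your proof is correct, and it differs from the paper's in one substantive way.

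\textbf{The paper's route.} The paper first tests whether \(R+\mathfrak{a}\) is invertible using the blowup algorithm of Theorem~\ref{thm:blowup}. This is a necessary condition, since \(\langle C\rangle\) is closed under addition. It then writes \(\mathfrak{a}=\mathfrak{b}:\mathfrak{c}\) with \(\mathfrak{b}=(R+\mathfrak{a}^{-1})^{-1}\) and \(\mathfrak{c}=(R+\mathfrak{a})^{-1}\) via Lemma~\ref{lem:ideal_as_fraction}, and trial divides. Since \(R:(R+\mathfrak{a}^{-1})=R\cap(R:\mathfrak{a}^{-1})=R\cap\mathfrak{a}\), these are exactly your \(\mathfrak{a}\cap R\) and \(\mathfrak{a}^{-1}\cap R\).

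\textbf{What you do differently.} You replace the a priori invertibility test by the a posteriori check \(\mathfrak{a}\prod_i\mathfrak{c}_i^{g_i}=\prod_i\mathfrak{c}_i^{f_i}\). You prove the decomposition by hand only when \(\mathfrak{a}\in\langle C\rangle\), which is the only case where it is needed. This frees you from the blowup machinery of Section~\ref{sec:blowing_up}. It also makes explicit two points the paper's terse ``trial division'' leaves implicit:
\begin{enumerate}
\item the loop counts equal the exponents, by the partial-order isomorphism \(\Z^{(C)}\to\langle C\rangle\) together with \(\mathfrak{c}_i\subsetneq R\);
\item some final check is needed in any case, because invertibility of \(\mathfrak{b},\mathfrak{c}\) does not place them in \(\langle C\rangle\).
\end{enumerate}

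\textbf{What the paper's route buys, and a justification to fix.} In the paper's version every ideal met during trial division is invertible. In yours this fails when \(\mathfrak{a}\notin\langle C\rangle\). For example, with \(R=\Z[2\ii]\) and \(\mathfrak{a}=\ii R\) you get \(\mathfrak{a}\cap R=2\Z[\ii]\), whose multiplier ring is \(\Z[\ii]\neq R\), so it is not invertible over \(R\). Your appeal to length additivity for invertible ideals is therefore not justified as stated, although the conclusion still holds. Suppose \(\mathfrak{b}\subseteq\mathfrak{c}_i\) and \(\mathfrak{b}\mathfrak{c}_i^{-1}=\mathfrak{b}\). Then \(\mathfrak{b}=\mathfrak{b}\mathfrak{c}_i\), which contradicts Nakayama at any maximal ideal containing \(\mathfrak{c}_i\), because \(\mathfrak{b}\) contains a nonzero integer and so is locally nonzero. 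Alternatively, simply cap the loop at \(\log_2\#(R/\mathfrak{b})\) iterations.

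With that fix, the strict inclusions \(\mathfrak{b}\subsetneq\mathfrak{b}\mathfrak{c}_i^{-1}\subseteq R\) give the factor-\(2\) drop in index. Every intermediate ideal, including the partial products \(\prod_i\mathfrak{c}_i^{f_i}\supseteq\mathfrak{b}\), lies between the initial \(\mathfrak{b}\) and \(R\). So your polynomial size bounds go through.
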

\begin{proof}
First verify whether \(R+\mathfrak{a}\) is invertible using Theorem~\ref{thm:blowup}, as it should be if \(\mathfrak{a}\in \langle C\rangle\). 
If so, then by Lemma~\ref{lem:ideal_as_fraction} we may write \(\mathfrak{a}=\mathfrak{b}:\mathfrak{c}\) for invertible \(\mathfrak{b},\mathfrak{c}\subseteq R\). We then proceed using trial division.
\end{proof}

\begin{definition}
Let \(R\) be a commutative ring. The set of fractional ideals of \(R\) has the structure of a multiplicative monoid with unit \(R\).
For a set \(X\) of fractional ideals contained in \(R\), we write \(\lla X \rra\) for the submonoid generated by \(X\), and we define the {\em closure of \(X\)}, written \(\textup{cl}_R(X)\) or simply \(\textup{cl}(X)\), to be the smallest submonoid containing \(X\) such that for all \(\mathfrak{a},\mathfrak{b}\in \textup{cl}(X)\) we have \(\mathfrak{a}+\mathfrak{b}\in\textup{cl}(X)\), and if \(\mathfrak{b}\) is invertible and \(\mathfrak{a}:\mathfrak{b}\subseteq R\) also \(\mathfrak{a}:\mathfrak{b}\in\textup{cl}(X)\).
\end{definition}

\begin{lemma}
Let \(R\subseteq S \subseteq\textup{Q}(R)\) be commutative (sub)rings and let \(X\) be a set of fractional ideals contained in \(R\).
Writing \(S\cdot X=\{S\mathfrak{a}\,|\, \mathfrak{a}\in X\}\), we have \(S\cdot \textup{cl}_R(X)\subseteq \textup{cl}_S(S\cdot X)\). \qed
\end{lemma}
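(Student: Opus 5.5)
The plan is to show that the preimage of \(\textup{cl}_S(S\cdot X)\) under the map \(\mathfrak{a}\mapsto S\mathfrak{a}\) is closed under all the closure operations, so that it contains \(\textup{cl}_R(X)\). Concretely, let \(\mathcal{M}\) be the set of fractional ideals \(\mathfrak{a}\subseteq R\) of \(R\) such that \(S\mathfrak{a}\in\textup{cl}_S(S\cdot X)\). Since \(\textup{cl}_R(X)\) is by definition the smallest set with the closure properties, it suffices to verify the following for \(\mathcal{M}\):
\begin{enumerate}
\item \(\mathcal{M}\) contains \(X\) and \(R\);
\item \(\mathcal{M}\) is closed under products and under sums;
\item if \(\mathfrak{a},\mathfrak{b}\in\mathcal{M}\) with \(\mathfrak{b}\) invertible and \(\mathfrak{a}:\mathfrak{b}\subseteq R\), then \(\mathfrak{a}:\mathfrak{b}\in\mathcal{M}\).
\end{enumerate}

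Items (i) and (ii) are immediate. For (i), \(SR=S\) is the unit of the monoid \(\textup{cl}_S(S\cdot X)\), and \(S\mathfrak{a}\in S\cdot X\) for each \(\mathfrak{a}\in X\). Each \(S\mathfrak{a}\) is a fractional ideal of \(S\) contained in \(S\), since \(\textup{Q}(R)\) contains \(\textup{Q}(S)\)-relevant units and \(\mathfrak{a}\) contains a regular element of \(R\). For (ii), use the identities \(S(\mathfrak{a}\mathfrak{b})=(S\mathfrak{a})(S\mathfrak{b})\) and \(S(\mathfrak{a}+\mathfrak{b})=S\mathfrak{a}+S\mathfrak{b}\), together with the fact that \(\textup{cl}_S\) is closed under products and sums.

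The main step is (iii). First, \(S\mathfrak{b}\) is invertible in \(S\), with inverse \(S\mathfrak{b}^{-1}\). Next, by Lemma~\ref{lem:inv_basic} we have \(\mathfrak{a}:\mathfrak{b}=\mathfrak{a}\mathfrak{b}^{-1}\), because \(x\mathfrak{b}\subseteq\mathfrak{a}\) is equivalent to \(x\in\mathfrak{a}\mathfrak{b}^{-1}\). It follows that
\[ S(\mathfrak{a}:\mathfrak{b})=S\mathfrak{a}\cdot S\mathfrak{b}^{-1}=S\mathfrak{a}:S\mathfrak{b}, \]
where the last equality again uses invertibility of \(S\mathfrak{b}\) over \(S\), with colons taken in \(\textup{Q}(S)\). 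Finally, \(S\mathfrak{a}:S\mathfrak{b}=S(\mathfrak{a}:\mathfrak{b})\subseteq SR=S\). This is exactly the hypothesis needed to apply the closure rule for \(\textup{cl}_S\), so \(S\mathfrak{a}:S\mathfrak{b}\in\textup{cl}_S(S\cdot X)\), as required.

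The only mild obstacle is bookkeeping about where colons are taken, since \(\textup{Q}(S)\) could a priori differ from \(\textup{Q}(R)\). Because \(S\subseteq\textup{Q}(R)\), regular elements of \(R\) stay invertible, so everything can be computed inside \(\textup{Q}(R)\). Moreover, for invertible \(\mathfrak{b}\) the product description \(\mathfrak{a}\mathfrak{b}^{-1}\) of \(\mathfrak{a}:\mathfrak{b}\) is intrinsic and does not depend on the ambient ring. Having verified (i)–(iii), we get \(\textup{cl}_R(X)\subseteq\mathcal{M}\), which is the statement \(S\cdot\textup{cl}_R(X)\subseteq\textup{cl}_S(S\cdot X)\).
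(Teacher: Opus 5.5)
Your proof is correct and is the same argument as the paper's. The paper also takes the preimage $M=\{\mathfrak{a}\subseteq R : S\mathfrak{a}\in\textup{cl}_S(S\cdot X)\}$, notes that it is a submonoid closed under sums and under admissible division by invertible ideals, and concludes $\textup{cl}_R(X)\subseteq M$ by minimality; your verification of the division step via $\mathfrak{a}:\mathfrak{b}=\mathfrak{a}\mathfrak{b}^{-1}$ and $S(\mathfrak{a}\mathfrak{b}^{-1})=S\mathfrak{a}\cdot(S\mathfrak{b})^{-1}$ simply spells out what the paper leaves implicit.
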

\begin{proof}
Consider $M=\{\mathfrak{a}\subseteq R\,|\, S\cdot \mathfrak{a} \in \textup{cl}_S(S\cdot X)\}$, and note that it is a multiplicative monoid closed under addition, and under division by invertible elements for which the quotient is integral. Hence $\textup{cl}_R(X)\subseteq M$ by definition of the closure, and $S\cdot \textup{cl}_R(X)\subseteq S\cdot M \subseteq \textup{cl}_S(S\cdot X)$.
\end{proof}

\begin{lemma}\label{lem:closure_is_monoid}
Let \(R\) be a commutative ring and \(C\) a set of invertible ideals contained in \(R\) which are pairwise coprime. Then \(\textup{cl}(C)=\lla C\rra\).
\end{lemma}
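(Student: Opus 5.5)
Since \(\lla C\rra\subseteq\textup{cl}(C)\) holds trivially, the plan is to show that \(\lla C\rra\) is itself closed under the three operations in the definition of the closure. Minimality of \(\textup{cl}(C)\) then gives \(\textup{cl}(C)\subseteq\lla C\rra\).

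We may discard any element of \(C\) equal to \(R\), since it contributes only the unit of the monoid. Let \(C'\subseteq C\) be the set of elements strictly contained in \(R\). By the remark preceding Lemma~\ref{lem:trial_division}, the map \(\Z^{(C')}\to\mathcal{I}(R)\) is an injection onto \(\langle C\rangle\) and an isomorphism of partially ordered groups. Here the order on ideals is inclusion, and the order on \(\Z^{(C')}\) is the reverse of the coordinatewise order on exponents, so that larger exponents give smaller ideals. Under this isomorphism \(\lla C\rra\) corresponds to \(\Z_{\geq0}^{(C')}\). Monoid closure of \(\lla C\rra\) is immediate.

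For sums, take \(\mathfrak{a}=\prod\mathfrak{c}^{e_\mathfrak{c}}\) and \(\mathfrak{b}=\prod\mathfrak{c}^{f_\mathfrak{c}}\) with \(e,f\geq 0\). The claim is that \(\mathfrak{a}+\mathfrak{b}=\prod\mathfrak{c}^{\min(e_\mathfrak{c},f_\mathfrak{c})}\). Write \(\mathfrak{d}=\prod\mathfrak{c}^{\min(e_\mathfrak{c},f_\mathfrak{c})}\). Multiplying by the invertible \(\mathfrak{d}^{-1}\) reduces this to showing \(\mathfrak{a}'+\mathfrak{b}'=R\), where \(\mathfrak{a}',\mathfrak{b}'\) are products over disjoint subsets of \(C'\). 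This follows from pairwise coprimality via the standard facts that \(\mathfrak{x}+\mathfrak{y}=\mathfrak{x}+\mathfrak{z}=R\) implies \(\mathfrak{x}+\mathfrak{yz}=R\) (expand \(1=(x+y)(x'+z)\)), and that coprime ideals have coprime powers. Alternatively, one can invoke the remark that \(\langle C\rangle\) is closed under addition: the sum is then some element of \(\langle C\rangle\) lying between \(\mathfrak{a},\mathfrak{b}\) and \(R\). Since the group isomorphism identifies addition with the lattice join, it corresponds to the coordinatewise minimum, and in particular lies in \(\lla C\rra\).

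For quotients, let \(\mathfrak{a},\mathfrak{b}\in\lla C\rra\) with \(\mathfrak{b}\) invertible and \(\mathfrak{a}:\mathfrak{b}\subseteq R\). Since \(\mathfrak{b}\) is invertible, Lemma~\ref{lem:inv_basic} gives \(\mathfrak{a}:\mathfrak{b}=\mathfrak{a}\mathfrak{b}^{-1}\), which lies in \(\langle C\rangle\) with exponent vector \(e-f\). The inclusion \(\mathfrak{a}\mathfrak{b}^{-1}\subseteq R\) then translates, through the order isomorphism, into \(e-f\geq0\), so the quotient lies in \(\lla C\rra\).

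The main obstacle is justifying the order-isomorphism claim, which the paper only calls easy: a product \(\prod\mathfrak{c}^{g_\mathfrak{c}}\) with some negative exponents is not contained in \(R\). I would prove it by writing the product as \(\mathfrak{p}\mathfrak{q}^{-1}\) with \(\mathfrak{p},\mathfrak{q}\subseteq R\) built from disjoint subsets of \(C'\), so that \(\mathfrak{p}+\mathfrak{q}=R\) by the coprimality argument above. If \(\mathfrak{p}\mathfrak{q}^{-1}\subseteq R\), then \(\mathfrak{p}\subseteq\mathfrak{q}\), hence \(\mathfrak{q}=\mathfrak{p}+\mathfrak{q}=R\). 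Because every element of \(C'\) is strictly contained in \(R\) and the elements are coprime, this forces \(\mathfrak{q}=R\) to be the empty product, so all the exponents were nonnegative.
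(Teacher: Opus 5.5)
Your proof is correct and follows the same route as the paper: you show that \(\lla C\rra\) is closed under sums and admissible quotients via the exponent formulas \(\min\{a_\mathfrak{c},b_\mathfrak{c}\}\) and \(a_\mathfrak{c}-b_\mathfrak{c}\), then conclude by minimality of \(\textup{cl}(C)\). You also supply two justifications the paper leaves implicit: the coprimality argument for the \(\min\) formula, and the \(\mathfrak{p}\mathfrak{q}^{-1}\) argument showing that \(\mathfrak{a}:\mathfrak{b}\subseteq R\) forces nonnegative exponents (where only \(\mathfrak{q}\subseteq\mathfrak{c}\subsetneq R\) is needed at the last step, not coprimality).
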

\begin{proof}
Clearly \(\lla C\rra \subseteq \textup{cl}(C)\).
For all \(\mathfrak{a},\mathfrak{b}\in\lla C\rra\) we may write \(\mathfrak{a}=\prod_{\mathfrak{c}\in C} \mathfrak{c}^{a_\mathfrak{c}}\) and \(\mathfrak{b}=\prod_{\mathfrak{c}\in C}\mathfrak{c}^{b_\mathfrak{c}}\) with \(a_\mathfrak{c},b_\mathfrak{c}\in\Z_{\geq0}\) almost all \(0\). 
Then
\[
\mathfrak{a}+\mathfrak{b} = \prod_{\mathfrak{c}\in C} \mathfrak{c}^{\min\{a_\mathfrak{c},b_\mathfrak{c}\}}
\quad\text{and}\quad 
\mathfrak{a}:\mathfrak{b} = \prod_{\mathfrak{c}\in C} \mathfrak{c}^{a_\mathfrak{c}-b_\mathfrak{c}}. \]
Hence \(\mathfrak{a}+\mathfrak{b}\in \lla C\rra\), and \(\mathfrak{a}:\mathfrak{b}\in \lla C\rra\) if \(\mathfrak{a}:\mathfrak{b}\subseteq R\). 
Thus \(\lla C\rra = \textup{cl}(C)\).
\end{proof}

\begin{definition}
Let \(R\) be a commutative ring and \(X\) a set of fractional ideals contained in \(R\).
A {\em coprime basis}\index{coprime!-basis} for \(X\) is a set \(C\) of pairwise coprime invertible ideals strictly contained in \(R\) satisfying \( X\subseteq\lla C\rra\).
\end{definition}

Note that a coprime basis need not exist for every \(X\). At the very least, the ideals in \(X\) should be invertible.

\begin{lemma}\label{lem:coprime_basis_partial_order}
For a commutative ring \(R\) and a set \(X\) of fractional ideals contained in \(R\) we may equip the set of coprime bases of \(X\) with a partial order where \(C\leq D\) if and only if \(\lla C\rra\subseteq\lla D\rra\).
\end{lemma}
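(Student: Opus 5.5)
The relation \(C\leq D\) is defined through the inclusion \(\lla C\rra\subseteq\lla D\rra\) of submonoids. Inclusion of sets is reflexive and transitive, so \(\leq\) is automatically a preorder. The only substantive claim is antisymmetry: if \(C\) and \(D\) are coprime bases of \(X\) with \(\lla C\rra=\lla D\rra\), then \(C=D\). The plan is to recover \(C\) intrinsically from the monoid \(\lla C\rra\), namely as its set of irreducible (atomic) elements. Equality of the monoids then forces equality of the bases.

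First fix a set \(C\) of pairwise coprime invertible ideals strictly contained in \(R\). As noted before Lemma~\ref{lem:trial_division}, the map \(\Z^{(C)}\to\langle C\rangle\) is an isomorphism of partially ordered groups. Restricting to nonnegative exponents gives a monoid isomorphism \(\Z_{\geq0}^{(C)}\to\lla C\rra\), \((a_\mathfrak{c})\mapsto\prod_\mathfrak{c}\mathfrak{c}^{a_\mathfrak{c}}\).

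Call an element \(\mathfrak{a}\neq R\) of a submonoid \(M\) \emph{irreducible in \(M\)} if \(\mathfrak{a}=\mathfrak{a}_1\mathfrak{a}_2\) with \(\mathfrak{a}_i\in M\) implies \(\mathfrak{a}_1=R\) or \(\mathfrak{a}_2=R\). In \(\Z_{\geq0}^{(C)}\) the nonzero elements that cannot be written as a sum of two nonzero elements are exactly the unit vectors. Via the isomorphism above, the irreducible elements of \(\lla C\rra\) are therefore exactly the \(\mathfrak{c}\in C\). Here we use that each \(\mathfrak{c}\neq R\), so the unit vectors indeed map to non-identity elements, and that injectivity makes the exponent vector of each element unique.

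Now suppose \(C\leq D\) and \(D\leq C\). Then \(\lla C\rra=\lla D\rra\) as submonoids of the monoid of fractional ideals, so they have the same irreducible elements. By the previous step, \(C\) equals the set of irreducibles of \(\lla C\rra\), and \(D\) equals the set of irreducibles of \(\lla D\rra\). Hence \(C=D\), and \(\leq\) is antisymmetric and so a partial order.

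The only point that needs care is the injectivity and positivity of \(\Z^{(C)}\to\mathcal{I}(R)\), which the paper asserts as easy. If it had to be justified, I would argue as follows. Suppose \(\prod\mathfrak{c}^{a_\mathfrak{c}}=\prod\mathfrak{c}^{b_\mathfrak{c}}\) and some \(a_\mathfrak{c}>b_\mathfrak{c}\). Cancel, and use coprimality: for coprime invertible \(\mathfrak{c}\) and \(\mathfrak{d}\) we have \(\mathfrak{c}^k+\mathfrak{d}^l=R\), and therefore \(\mathfrak{c}\supseteq\prod\mathfrak{d}_i^{e_i}\) forces \(\mathfrak{c}=R\), which is a contradiction.
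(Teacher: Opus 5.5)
Your proof is correct, but it takes a different route from the paper's.

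The paper compares the two bases directly. It expands each \(\mathfrak{c}\in C\) as \(\prod_{\mathfrak{d}\in D}\mathfrak{d}^{m_{\mathfrak{cd}}}\). Pairwise coprimality of \(C\) shows that each \(\mathfrak{d}\) occurs in at most one \(\mathfrak{c}\), since two elements of \(C\) contained in the same \(\mathfrak{d}\subsetneq R\) cannot be comaximal. The inclusion \(\lla D\rra\subseteq\lla C\rra\) shows that each \(\mathfrak{d}\) occurs in at least one \(\mathfrak{c}\). The paper then plays \(\mathfrak{c}=\mathfrak{d}^m\) against \(\mathfrak{e}=\mathfrak{c}^n\), with \(\mathfrak{e}\in D\), to force \(\mathfrak{e}=\mathfrak{d}\) and \(m=n=1\).

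You instead recover \(C\) intrinsically as the set of atoms of the monoid \(\lla C\rra\cong\Z_{\geq0}^{(C)}\), so equality of the monoids gives \(C=D\) at once. This buys a cleaner and more general statement. Coprimality is used only once, to get that \(\lla C\rra\) is free on \(C\). Your argument in fact shows that any two sets that freely generate the same submonoid coincide.

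Your route also avoids a compressed step in the paper: the assertion that each \(\mathfrak{c}\) is a power of a \emph{single} \(\mathfrak{d}\). Spelled out, that step needs the reverse expansion of \(\mathfrak{d}\) in terms of \(C\) and uniqueness of exponents in \(\lla D\rra\). The paper's argument, in exchange, works directly with divisibility relations between \(C\) and \(D\) rather than with abstract monoid structure.

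Your sketch of injectivity is sound. You cancel in the group \(\mathcal{I}(R)\) to get a relation between disjoint subsets of \(C\). Then \(\mathfrak{c}_0\) contains a product of ideals comaximal with it, or contains \(R\) if that product is empty, which forces \(\mathfrak{c}_0=R\).
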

\begin{proof}
It suffices to verify for coprime bases \(C\) and \(D\) that \(\lla C\rra = \lla D\rra\) implies \(C=D\).
For each $\mathfrak{c}\in C$ write \(\mathfrak{c}=\prod_{\mathfrak{d}\in D} \mathfrak{d}^{m_{\mathfrak{cd}}}\) for some \(m_{\mathfrak{c}\mathfrak{d}}\in\Z_{\geq 0}\).
Since the elements of \(C\) are pairwise coprime, there is for every \(\mathfrak{d}\in D\) at most one \(\mathfrak{c}\in C\) such that \(m_{\mathfrak{cd}}>0\).
Because \(\lla D\rra\subseteq \lla C\rra\), there is no \(\mathfrak{d}\in D\) such that for all \(\mathfrak{c}\in C\) we have \(m_{\mathfrak{cd}}=0\). 

Let \(\mathfrak{d}\in D\). Then there exist \(\mathfrak{c}\in C\) and \(m>0\) such that \(\mathfrak{c}=\mathfrak{d}^m\) and in turn by symmetry \(\mathfrak{e}\in D\) and \(n>0\) such that \(\mathfrak{e}=\mathfrak{c}^n\). Then \(\mathfrak{e}=\mathfrak{d}^{mn}\), so \(\mathfrak{e}=\mathfrak{d}\) and \(m=n=1\). Thus \(\mathfrak{d}=\mathfrak{c}\in C\) and \(D\subseteq C\). By symmetry we have \(C=D\).
\end{proof}

\begin{proposition}\label{prop:closure}
Let \(R\) be a Noetherian commutative ring and \(X\) a set of fractional ideals contained in \(R\). Then:
\begin{enumerate}[nosep,label=\textup{(\roman*)}]
\item \(X\) has a coprime basis if and only if \(\textup{cl}(X)\subseteq\mathcal{I}(R)\);
\item if \(X\) has a coprime basis, then it has a unique minimal one;
\item if \(C\) is a coprime basis of \(X\), then \(C\) is minimal if and only if \(C\subseteq\textup{cl}(X)\).
\end{enumerate}
\end{proposition}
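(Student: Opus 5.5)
The plan is to reduce everything to Lemma~\ref{lem:closure_is_monoid} and to build an explicit coprime basis inside \(\textup{cl}(X)\) out of its ``irreducible'' elements.

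\textbf{Easy direction of (i).} First observe that \(\textup{cl}\) is monotone: if \(X\subseteq Y\) then \(\textup{cl}(X)\subseteq\textup{cl}(Y)\), since \(\textup{cl}(Y)\) is a closed monoid containing \(X\). So if \(C\) is a coprime basis of \(X\), then \(X\subseteq\lla C\rra\), and Lemma~\ref{lem:closure_is_monoid} gives
\[\textup{cl}(X)\subseteq\textup{cl}(\lla C\rra)=\textup{cl}(C)=\lla C\rra\subseteq\mathcal{I}(R).\]
This proves the ``only if'' of (i). It also shows that \(\textup{cl}(X)\subseteq\lla D\rra\) for \emph{every} coprime basis \(D\), which I will reuse below.

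\textbf{Converse of (i): the construction.} Assume \(\textup{cl}(X)\subseteq\mathcal{I}(R)\). Call \(\mathfrak{c}\in\textup{cl}(X)\setminus\{R\}\) \emph{irreducible} if it is not a product \(\mathfrak{bd}\) with \(\mathfrak{b},\mathfrak{d}\in\textup{cl}(X)\setminus\{R\}\). Let \(C_0\) be the set of irreducible elements. There are two claims to check.

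\emph{Claim 1: the elements of \(C_0\) are pairwise coprime.} Take distinct \(\mathfrak{c},\mathfrak{d}\in C_0\) and put \(\mathfrak{e}=\mathfrak{c}+\mathfrak{d}\in\textup{cl}(X)\). This \(\mathfrak{e}\) is invertible and contains \(\mathfrak{c}\), so \(\mathfrak{c}:\mathfrak{e}\subseteq R\), hence \(\mathfrak{c}:\mathfrak{e}\in\textup{cl}(X)\). By invertibility, \(\mathfrak{c}=\mathfrak{e}\cdot(\mathfrak{c}:\mathfrak{e})\). Irreducibility then forces \(\mathfrak{e}=R\) or \(\mathfrak{e}=\mathfrak{c}\); symmetrically \(\mathfrak{e}=R\) or \(\mathfrak{e}=\mathfrak{d}\). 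Since \(\mathfrak{c}\neq\mathfrak{d}\), we must have \(\mathfrak{e}=R\).

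\emph{Claim 2: every element of \(\textup{cl}(X)\) lies in \(\lla C_0\rra\).} This is the step I expect to need the most care, namely the termination of factorization, and it is where Noetherianity enters. Suppose the claim fails. Pick a counterexample \(\mathfrak{a}\) that is maximal among ideals of \(R\), which exists since \(R\) is Noetherian. Then \(\mathfrak{a}\neq R\) and \(\mathfrak{a}\) is reducible, say \(\mathfrak{a}=\mathfrak{bd}\) with \(\mathfrak{b},\mathfrak{d}\in\textup{cl}(X)\setminus\{R\}\). Then \(\mathfrak{a}\subseteq\mathfrak{b}\). Equality \(\mathfrak{a}=\mathfrak{b}\) is impossible: \(\mathfrak{b}\) is invertible, so cancelling would give \(\mathfrak{d}=R\). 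Hence \(\mathfrak{a}\subsetneq\mathfrak{b}\) and likewise \(\mathfrak{a}\subsetneq\mathfrak{d}\). By maximality both \(\mathfrak{b}\) and \(\mathfrak{d}\) lie in \(\lla C_0\rra\), so \(\mathfrak{a}\) does too, a contradiction. Since \(X\subseteq\textup{cl}(X)\), the two claims together show that \(C_0\) is a coprime basis of \(X\).

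\textbf{(ii) and (iii).} By construction \(C_0\subseteq\textup{cl}(X)\). Now let \(C\) be any coprime basis with \(C\subseteq\textup{cl}(X)\), and let \(D\) be any coprime basis at all. The first paragraph gives
\[\lla C\rra\subseteq\textup{cl}(X)\subseteq\lla D\rra,\]
so \(C\leq D\) in the partial order of Lemma~\ref{lem:coprime_basis_partial_order}. Thus any coprime basis contained in \(\textup{cl}(X)\) is a minimum. Applied to \(C_0\), this shows \(C_0\) is the unique minimal coprime basis, which is (ii); uniqueness uses antisymmetry from Lemma~\ref{lem:coprime_basis_partial_order}. For (iii), the ``if'' direction is the statement just proved. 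For ``only if'', a minimal \(C\) satisfies \(C\leq C_0\) trivially and \(C_0\leq C\) since \(C_0\) is the minimum, so \(C=C_0\subseteq\textup{cl}(X)\).
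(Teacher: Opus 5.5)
Your proof is correct and follows essentially the same route as the paper. The paper takes as basis the elements of \(\textup{cl}(X)\setminus\{R\}\) that are maximal under inclusion; given \(\textup{cl}(X)\subseteq\mathcal{I}(R)\), these are exactly your irreducible elements. It then proves coprimality via sums, exhaustion of \(\textup{cl}(X)\) by Noetherian induction using \(\mathfrak{a}=\mathfrak{b}(\mathfrak{a}:\mathfrak{b})\), and minimality via \(\lla C\rra\subseteq\textup{cl}(X)\subseteq\lla D\rra\), just as you do (with the maximality definition the coprimality step is immediate, which is why you needed the extra division step).
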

\begin{proof}
(i) Suppose \(X\) has a coprime basis \(D\).
Then \(X\subseteq \lla D\rra = \textup{cl}(D)\) by Lemma~\ref{lem:closure_is_monoid}, so \(\textup{cl}(X)\subseteq\textup{cl}(D)=\lla D\rra \subseteq\mathcal{I}(R)\). 
Suppose instead that \(\textup{cl}(X)\subseteq \mathcal{I}(R)\).
We will show that
\[C=\{ \mathfrak{a}\in \textup{cl}(X) \,|\, \forall\, \mathfrak{b}\in \textup{cl}(X),\, \mathfrak{a}\subsetneq \mathfrak{b} \Leftrightarrow \mathfrak{b}=R\}\]
is a coprime basis of \(X\).

First, note that the elements of \(C\) are pairwise coprime: For \(\mathfrak{a},\mathfrak{b}\in C\) we have \(\mathfrak{a}+\mathfrak{b}\in \textup{cl}(X)\). If \(\mathfrak{a}\subsetneq \mathfrak{a}+\mathfrak{b}\), then \(\mathfrak{a}+\mathfrak{b}=R\) by definition of \(C\), and similarly when \(\mathfrak{b}\subsetneq\mathfrak{a}+\mathfrak{b}\). Otherwise \(\mathfrak{a}=\mathfrak{a}+\mathfrak{b}=\mathfrak{b}\).
Second, we show \(\textup{cl}(X)\subseteq\lla C\rra\) using Noetherian induction:
Certainly \(R\in\lla C\rra\). 
Now let \(\mathfrak{a}\in \textup{cl}(X)\setminus\{R\}\) and suppose \(\mathfrak{c}\in\lla C\rra\) for all \(\mathfrak{c}\in \textup{cl}(X)\) with \(\mathfrak{a}\subsetneq \mathfrak{c}\).
Either \(\mathfrak{a}\in C\), or there is some \(\mathfrak{b}\in \textup{cl}(X)\) such that \(\mathfrak{a}\subsetneq \mathfrak{b} \subsetneq R\), in which case \(\mathfrak{b}, (\mathfrak{a}:\mathfrak{b})\in\lla C\rra\) by the induction hypothesis and hence \(\mathfrak{a}\in\lla C\rra\). Thus \(C\) is a coprime basis for \(X\), as was to be shown.

(ii) Suppose now that \(X\) has a coprime basis. 
We will show that \(C\) as in (i) is the unique minimal coprime basis.
Let \(D\) be any coprime basis of \(X\).
We have \(C\subseteq\textup{cl}(X)\), so \(\textup{cl}(C) \subseteq \textup{cl}(X)\).
On the other hand, \(X\subseteq \textup{cl}(C)\), so \(\textup{cl}(C)=\textup{cl}(X)\).
Similarly for \(D\) we have \(\textup{cl}(X)\subseteq\textup{cl}(D)\).
Hence \(\lla C \rra =\textup{cl}(C)=\textup{cl}(X)\subseteq\textup{cl}(D)=\lla D\rra\) by Lemma~\ref{lem:closure_is_monoid}.
Thus \(C\leq D\), as was to be shown.

(iii) It is clear that the minimal coprime basis from (ii) satisfies \(C\subseteq\textup{cl}(X)\).
Let \(D\) be any coprime basis of \(X\) such that \(D\subseteq\textup{cl}(X)\).
Then as before we obtain \(\lla D\rra = \textup{cl}(D)=\textup{cl}(X)\).
Hence \(\lla C \rra = \textup{cl}(X)=\lla D\rra\) and \(C=D\) by Lemma~\ref{lem:coprime_basis_partial_order}. Hence \(D\) is minimal.
\end{proof}

We now construct an algorithm to compute the minimal coprime base when \(R\) is an order. 
The output of such an algorithm is clearly functorial. 

\begin{theorem}\label{thm:coprime_basis}
There exists a polynomial-time algorithm that, given an order \(R\) and a finite set \(X\) of fractional ideals contained in \(R\), computes the minimal order \(R\subseteq S\subseteq \Q R\) such that \(\{S\mathfrak{a} \,|\, \mathfrak{a}\in X \}\) has a coprime basis, and then computes the minimal coprime basis of \(\{S\mathfrak{a} \,|\, \mathfrak{a}\in X \}\).
\end{theorem}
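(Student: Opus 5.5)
We follow Ge's factor refinement, but enlarge the order by blowups whenever an intermediate ideal fails to be invertible. The algorithm keeps a current order \(S\) (initially \(R\)) and a finite list \(D\) of nonzero ideals of \(S\) contained in \(S\) (initially \(S\cdot X\), with copies of \(S\) discarded). At every stage \(D\subseteq\textup{cl}_S(S\cdot X)\), and \(S\) is contained in every order \(T\) for which \(T\cdot X\) has a coprime basis.

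A round proceeds as follows. First, for each \(\mathfrak{d}\in D\) we replace \(S\) by the blowup of \(S\) at \(\mathfrak{d}\) given by Theorem~\ref{thm:blowup}, and replace \(D\) by \(S\cdot D\). The minimality invariant is preserved. Indeed, by Proposition~\ref{prop:closure}(i) applied over \(T\), together with the lemma \(T\cdot\textup{cl}_S(Y)\subseteq\textup{cl}_T(T\cdot Y)\), every ideal \(T\mathfrak{d}\) must be invertible. Minimality of the blowup in Theorem~\ref{thm:main_ideal} then gives that the blowup is contained in \(T\).

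Once all members of \(D\) are invertible, we look for a pair \(\mathfrak{a}\neq\mathfrak{b}\) in \(D\) that is not coprime. If there is none, \(D\) is a coprime basis contained in the closure, so by Proposition~\ref{prop:closure}(iii) it is the minimal one and we stop. Otherwise we put \(\mathfrak{g}=\mathfrak{a}+\mathfrak{b}\) and blow up at \(\mathfrak{g}\), which is again justified by the invariant since \(\mathfrak{g}\) lies in the closure. We then replace \(\mathfrak{a},\mathfrak{b}\) by \(\mathfrak{g},\ \mathfrak{a}:\mathfrak{g},\ \mathfrak{b}:\mathfrak{g}\), dropping any that equal \(S\). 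These quotients are integral and lie in the closure, so the invariant \(D\subseteq\textup{cl}_S(S\cdot X)\) survives. At the end we have a coprime basis of \(S\cdot X\) that lies in the closure. Its existence over the final \(S\), combined with the invariant, shows that \(S\) is the minimal order.

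The main obstacle is the polynomial bound on the number of rounds and on the sizes of the ideals. For the blowups, each strict enlargement of \(S\) at least doubles \([S:R]\), and \([S:R]\) is bounded by the index of \(R\) in a suitable over-order. Since \(\mathfrak{d}\cap\Z\) is nontrivial for every \(\mathfrak{d}\in X\), one can bound this index in terms of \(\prod_{\mathfrak{a}\in X}\#(R/\mathfrak{a})\), e.g.\ by localizing at the primes dividing the norms and using Proposition~\ref{prop:invertible_ideal_finite_extension}. So only polynomially many blowups occur. For the refinement steps I would adapt Ge's potential-function argument. By Proposition~\ref{prop:invertible_ideal_finite_extension}, the vector \(\sum_{\mathfrak{d}\in D}\log\#(S/\mathfrak{d})\) of local lengths is preserved under extension of \(S\). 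A refinement step replaces \(\mathfrak{a},\mathfrak{b}\) by ideals whose lengths add up to at most \(\ell(\mathfrak{a})+\ell(\mathfrak{b})\), so the total length never increases. The number of elements of \(D\) is bounded by the total length, which is polynomial in the input size. I would then order the refinements as in Bach--Driscoll--Shallit, i.e.\ refine one new element against a list that is already coprime, so that the number of pair operations is quadratic in the total length.

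Finally, each ideal occurring in the process has index in \(S\) dividing a product of input norms, so the LLL remark of Section~\ref{sec:algorithmic_clich_e} keeps the representations of polynomial size. The membership checks needed along the way can be done with Lemma~\ref{lem:trial_division}.
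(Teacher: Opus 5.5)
Your proposal is correct and is essentially the paper's proof: blowups via Theorem~\ref{thm:blowup}, splitting a non-coprime pair into \(\mathfrak{g}\), \(\mathfrak{a}:\mathfrak{g}\), \(\mathfrak{b}:\mathfrak{g}\), a total-length potential that Proposition~\ref{prop:invertible_ideal_finite_extension} makes insensitive to enlarging \(S\), and minimality from Proposition~\ref{prop:closure}(iii) together with the minimality of blowups. Your invariant that \(S\) lies in every admissible \(T\) is a cleaner phrasing of the paper's last paragraph. Two small corrections: first, each refinement lowers the total length by at least \(\ell(S/\mathfrak{g})\geq 1\), which bounds the number of rounds directly, so no Bach--Driscoll--Shallit ordering is needed; second, because the first blowups act on possibly non-invertible ideals, indices can grow there (for \(2\Z[\ii]\subset\Z[2\ii]\) the index goes from \(2\) to \(4\)), so the potential should be measured after the first round and ``index dividing a product of input norms'' should read ``index dividing \(\#(R/\mathfrak{x})^{\rk_{\Z} R}\) for some \(\mathfrak{x}\in X\)''.
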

\begin{proof}
Start with \(S\) equal to the minimal order \(R\subseteq S\subseteq\textup{Q}(R)\) where the elements of \(X\) become invertible using Theorem~\ref{thm:blowup}, and let \(C=X\).

Iteratively compute \(\mathfrak{c}=S\mathfrak{a}+S\mathfrak{b}\) for distinct \(\mathfrak{a},\mathfrak{b}\in C\). 
If \(\mathfrak{c}\neq S\), replace \(S\) by the unique minimal order \(S\subseteq T\subseteq \textup{Q}(R)\)  where \(T\mathfrak{c}\) is invertible using Theorem~\ref{thm:blowup}, replace \(\mathfrak{a}\) and \(\mathfrak{b}\) in \(C\) by \(T\mathfrak{a}: T\mathfrak{c}\) and \(T\mathfrak{b}:T\mathfrak{c}\), and add \(T\mathfrak{c}\) to \(C\).
Once \(S\mathfrak{a}+S\mathfrak{b}=S\) for all distinct \(\mathfrak{a},\mathfrak{b}\in C\) we terminate and return the order \(S\) and coprime basis \(S\cdot C\).

Polynomial run time follows from Lemma~\ref{lem:local_length} and the fact that \(\sum_{\mathfrak{a}\in C} \ell_R(S/\mathfrak{a}S)\), which is bounded by the length of the input, decreases by at least \(1\) after every iteration where \(\mathfrak{c}\neq S\). 
For this, the fact that \(S\) changes throughout the algorithm is irrelevant by Proposition~\ref{prop:invertible_ideal_finite_extension}. 
This proposition also gives a polynomial bound on \(\# C\) and hence on the number of pairs \(\mathfrak{a},\mathfrak{b}\in C\) to check for coprimality every iteration.

It remains to show correctness. 
With induction one shows that during each step of the algorithm \(S\cdot X\subseteq\lla S\cdot C\rra\) holds.
It follows that \(S\cdot C\) is indeed a coprime basis for \(S\cdot X\), and \(S\cdot C\subseteq\textup{cl}(S\cdot X)\), so it is minimal by Proposition~\ref{prop:closure}.
Suppose \(R\subseteq T\subseteq\textup{Q}(R)\) be such that \(\textup{cl}(TX)\subseteq\mathcal{I}(T)\).
Then at every step of the algorithm we could replace \(S\) by \(S\cap T\) and preserve invertibility, so \(S\subseteq T\) holds at every step by the minimality of \(S\) guaranteed by Theorem~\ref{thm:blowup}.
Hence \(S\) is minimal such that \(\textup{cl}(S\cdot X)\subseteq\mathcal{I}(S)\), and the algorithm is correct.
\end{proof}

\section{Roots of ideals}

In this section we will prove the main theorems on taking roots of ideals in orders.
For the remainder of the section \(R\) will be a commutative ring.

Let \(S\) be a commutative \(R\)-algebra. 
We say $S$ is \emph{separable} if $S$ is projective as $(S\tensor_RS)$-module, or equivalently if the multiplication map $S\tensor_R S\to S$ treated as $(S\tensor_R S)$-module homomorphism has a section.
We say \(S\) is {\em finite-\'etale}\index{finite-\'etale} if \(S\) is projective and separable over \(R\).

\begin{lemma}
\label{lem:etale_base_change}
Let \(S\) be a finite-\'etale \(R\)-algebra. Then
\begin{enumerate}[nosep,label=\textup{(\roman*)}]
\item for all ideals \(\mathfrak{a}\subseteq R\) the \(R/\mathfrak{a}\)-algebra \(S/\mathfrak{a}S\) is finite-\'etale;
\item for all \(\mathfrak{m}\in\maxspec R\) the \(R_\mathfrak{m}\)-algebra \(S_\mathfrak{m}\) is finite-\'etale;
\item if \(R\) is a field, then \(S\) is a product of fields.
\end{enumerate}
\end{lemma}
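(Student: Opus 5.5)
The plan is to verify projectivity and separability separately in each case, using the characterization that \(S\) is separable if and only if the multiplication map \(\mu\colon S\tensor_R S\to S\) has an \((S\tensor_R S)\)-linear section. Equivalently, there is an element \(e\in S\tensor_R S\) with \(\mu(e)=1\) and \((x\tensor 1-1\tensor x)e=0\) for all \(x\in S\); this is the separability idempotent, and the section is \(s\mapsto (s\tensor 1)e\). The advantage of this formulation is that it is preserved by any base change \(R\to R'\). Indeed, \((S\tensor_R R')\tensor_{R'}(S\tensor_R R')\cong (S\tensor_R S)\tensor_R R'\), and the image of \(e\) satisfies the same identities, since those identities only need checking on generators \(x\tensor 1\) of \(S\tensor_R R'\) over \(R'\). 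Projectivity is likewise preserved under base change: if \(S\oplus P\cong R^{(I)}\), then \((S\tensor_R R')\oplus(P\tensor_R R')\cong R'^{(I)}\).

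For (i), I take \(R'=R/\mathfrak{a}\) and note that \(S\tensor_R R/\mathfrak{a}\cong S/\mathfrak{a}S\). For (ii), I take \(R'=R_\fm\) and note that \(S\tensor_R R_\fm\cong S_\fm\). Both then follow directly from the base-change observation above.

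For (iii), let \(R=k\) be a field. Since \(S\) is projective over \(k\), and "finite" in the paper's setting I read as finitely generated, \(S\) is a finite-dimensional \(k\)-algebra. It is therefore Artinian, and so a finite product of local Artinian rings \(S_i\). Each factor \(S_i\) is again separable over \(k\), because the projection \(S\to S_i\) is a localization at an idempotent and the separability idempotent maps to one for \(S_i\).

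It then remains to show that a local finite-dimensional separable \(k\)-algebra \((A,\fm)\) is a field. First, separability passes to the quotient \(A\to A/\fm=L\): the image of \(e\) in \(L\tensor_k L\) is a separability idempotent. Next I base change to an algebraic closure \(\bar k\). Then \(\bar A=A\tensor_k\bar k\) is separable over \(\bar k\), and a separable algebra over a field is reduced. To see reducedness, apply \(\mathrm{id}\tensor\mu\) to the identity \((x\tensor 1)e=(1\tensor x)e\); for nilpotent \(x\) this shows that left multiplication by \(x\) is a nilpotent map with trace zero, and the trace form \(\mathrm{Tr}(xy)\) is nondegenerate for separable algebras. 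A cleaner route avoids the trace argument: over an algebraically closed field, for a local factor \(B\) with residue field \(\bar k\), the quotient \(\fm_B/\fm_B^2\) is \(\Omega_{B/\bar k}\tensor \bar k\). Separability means exactly that \(I/I^2=\Omega=0\), where \(I=\ker\mu\), so \(\fm_B=\fm_B^2\). Nakayama then gives \(\fm_B=0\). Hence \(\bar A\) is reduced, so \(A\subseteq\bar A\) is reduced, and a local Artinian reduced ring is a field. Therefore \(S=\prod S_i\) is a product of fields.

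The main obstacle is (iii), specifically showing nilpotents vanish. I would use the Kähler differential route: separability is equivalent to \(\Omega_{S/k}=0\) together with \(S\tensor S\to S\) splitting, and this is stable under passing to \(\bar k\). There it forces \(\fm/\fm^2=0\) for each maximal ideal via the standard isomorphism \(\fm/\fm^2\cong\Omega_{B/\bar k}\tensor_B\bar k\), valid for rational points.
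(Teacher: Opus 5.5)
Your proof is correct, but it takes a different route from the paper, which proves the lemma entirely by citation. The paper notes that only separability needs checking in (i) and (ii), and then cites Propositions III.1.7 and III.2.5 and Theorem III.3.1 of DeMeyer--Ingraham for (i), (ii) and (iii) respectively.

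You instead make everything self-contained:
\begin{enumerate}
\item For (i) and (ii), the separability idempotent \(e\) is visibly stable under any base change \(R\to R'\), so both parts follow uniformly.
\item For (iii), you pass to local factors over \(\overline{k}\) and combine \(\Omega=0\), the identification \(\mathfrak{m}_B/\mathfrak{m}_B^2\cong\Omega_{B/\overline{k}}\otimes_B\overline{k}\), and Nakayama.
\end{enumerate}
The citation route is shorter. It also avoids committing to a reading of ``finite'': the paper's definition of finite-\'etale says only ``projective and separable'', although in its sole application \(R/\mathfrak{a}\) is a finite ring anyway. Your route makes transparent that only two facts are used: base-change stability of \(e\) and the vanishing \(\Omega=0\).

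Some tidying would help:
\begin{enumerate}
\item Drop the trace-form route. As written, ``apply \(\mathrm{id}\otimes\mu\)'' does not typecheck, and nondegeneracy of the trace form for separable algebras is essentially what you are trying to prove.
\item The passage to \(L=A/\mathfrak{m}\) is never used.
\item ``Separability means exactly \(\Omega=0\)'' is more than you need. The implication you use follows in one line: \(1-e\in I\) and \(Ie=0\) give \(I=I(1-e)\subseteq I^2\).
\item Only injectivity of \(\mathfrak{m}_B/\mathfrak{m}_B^2\to\Omega_{B/\overline{k}}\otimes_B\overline{k}\) is needed. It holds because every \(\overline{k}\)-linear \(h\colon\mathfrak{m}_B/\mathfrak{m}_B^2\to\overline{k}\) yields a \(\overline{k}\)-derivation \(b\mapsto h(b-\lambda_b)\), where \(\lambda_b\in\overline{k}\) is the residue of \(b\). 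State explicitly that each residue field of \(A\otimes_k\overline{k}\) is \(\overline{k}\) because it is finite over \(\overline{k}\).
\end{enumerate}
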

\begin{proof}
For (i) and (ii) it suffices to verify separability.
For (i) it is trivial that \(R/\mathfrak{a}\) is separable over \(R\), hence \(S/\mathfrak{a}S\) is separable over \(R/\mathfrak{a}\) by Proposition~III.1.7 of \cite{separable}.
For (ii) we have Proposition~III.2.5 of \cite{separable}.
Finally, (iii) is a consequence of Theorem~III.3.1 of \cite{separable}.
\end{proof}

\begin{theorem}\label{thm:root_exists_condition}
Let \(Z\subseteq R\subseteq S \subseteq \textup{Q}(R)\) be commutative (sub)rings such that \(Z\) is Dedekind and \(S\) is finitely generated as a \(Z\)-module, let \(\mathfrak{a}\subseteq R\) be an invertible ideal and let \(m\in\Z_{\geq 0}\).
Write \(a=\mathfrak{a}\cap Z\) and suppose \(R/\mathfrak{a}\) is finite-\'etale over \(Z/a\).
If there exists an ideal \(\mathfrak{b}\subseteq S\) with \(S\mathfrak{a}=\mathfrak{b}^m\), then there exists an ideal \(b\subseteq Z\) with \(a=b^m\).
\end{theorem}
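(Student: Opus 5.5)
Since $Z$ is Dedekind and $a=\mathfrak{a}\cap Z$ is an ideal of $Z$, it suffices to show that for every prime $\fp$ of $Z$ the valuation $v_\fp(a)$ is divisible by $m$. We localize at $\fp$ and so assume $Z$ is a discrete valuation ring with uniformizer $\pi$. Then $R$ and $S$ are semi-local by Lemma~\ref{lem:semi-local}, and $\mathfrak{a}=R\alpha$ for some $\alpha$ by Proposition~\ref{prop:semi_loc_inv}. Write $a=\pi^kZ$. The case $m=0$ forces $S\mathfrak{a}=S$. Since $\ell(S/S\mathfrak{a})=\ell(R/\mathfrak{a})$ by Proposition~\ref{prop:invertible_ideal_finite_extension} (note that $\ell_R(S/R)<\infty$ because $\pi^N S\subseteq R$), we get $\mathfrak{a}=R$, hence $a=Z$, which is fine. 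So assume $m\geq 1$ and $k\geq 1$.

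The key step is to exploit the \'etale hypothesis. Since $R/\mathfrak{a}$ is finite-\'etale over $Z/\pi^k$, it is projective, hence free, over the local ring $Z/\pi^kZ$, say of rank $r$. It is also separable, so $R/(\mathfrak{a}+\pi R)$ is a product of finite separable field extensions of $Z/\pi$ by Lemma~\ref{lem:etale_base_change}(i),(iii). I claim this forces $\mathfrak{a}+\pi R=\pi R+\mathfrak{a}$ to be \emph{reduced over} $\pi$, and more precisely that $\mathfrak{a}=\pi^k R+\mathfrak{r}$, where $\mathfrak{r}=\mathfrak{a}+\pi R$ is a radical ideal. The intended statement is the following: localizing at each maximal ideal $\fn$ of $R$ containing $\mathfrak{a}$, the ring $(R/\mathfrak{a})_\fn$ is local, free over $Z/\pi^k$, and has residue field $\kappa(\fn)$ with $\fn(R/\mathfrak{a})_\fn=\pi(R/\mathfrak{a})_\fn$. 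In other words, $\mathfrak{a}_\fn+\pi R_\fn=\fn R_\fn$, and $\ell_{R_\fn}(R_\fn/\mathfrak{a}_\fn)=k$.

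Next I compare lengths. Let $T=S_\fn$, which is semi-local. The ideal $\mathfrak{b}$ is invertible in $S$, since $\mathfrak{b}^m=S\mathfrak{a}$ is invertible and $\mathfrak{b}\cdot\mathfrak{b}^{m-1}\mathfrak{a}^{-1}=S$. Hence $\mathfrak{b}_\fn=T\beta$. Localizing further at a maximal ideal $\fm$ of $S$ above $\fn$, the length over $R_\fn$ of $S_\fm/\mathfrak{a}S_\fm$ equals $m\cdot\ell(S_\fm/\beta S_\fm)$. This uses the multiplicativity of lengths for principal ideals generated by regular elements, which one gets from exact sequences as in Proposition~\ref{prop:invertible_ideal_finite_extension}. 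The main obstacle is getting a divisibility statement for $k$ itself rather than merely for a sum. For this I use that $\pi\in\fn R_\fn\subseteq\mathfrak{a}_\fn+\pi R_\fn$, together with the structure $R_\fn/\mathfrak{a}_\fn\cong (Z/\pi^k)\otimes W$ with $W$ unramified. This should show that $\alpha$ may be chosen as $\pi^k u$ times something with $v$-adic behaviour like that of $\pi$; concretely, $\mathfrak{a}_\fn=\pi^kR_\fn+\mathfrak{r}_\fn$ with $\mathfrak{r}_\fn$ coprime-like to $\pi$ locally.

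Using this, the plan is to pass to a valuation $w$ on a minimal prime of $\widehat{S_\fm}$, or a discrete valuation of the normalization of $S$ modulo a minimal prime, lying over $\fm$. For such $w$ I aim to show $w(\alpha)=k\,w(\pi)$ whenever $w$ is centred on a prime over $\fn$ where the \'etale structure makes $\alpha$ and $\pi^k$ comparable. From $\mathfrak{b}^m=S\mathfrak{a}$ we get $w(\alpha)=m\,w(\beta)$, so $k\,w(\pi)\in m\Z$. To remove the factor $w(\pi)$, the best route is to take $\gcd$ over suitable $w$, or to pick $w$ with $w(\pi)=1$. The unramifiedness of $R/\mathfrak{a}$ should supply a branch with $w(\pi)=1$ relative to the $Z$-structure, because $\mathfrak{a}_\fn\not\subseteq\pi\fn R_\fn$. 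I expect this ramification step to be the crux: it is where Lemma~\ref{lem:etale_base_change} must be turned into the statement ``$\pi$ is a local parameter at $\fn$ modulo $\mathfrak{a}$''.
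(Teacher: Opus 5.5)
Your reductions and your local computation are correct and agree with the paper. After localizing so that $Z$ is a discrete valuation ring, freeness of $R/\mathfrak{a}$ over $Z/\pi^kZ$ and the fact that $R/(\mathfrak{a}+\pi R)$ is a product of fields give $\ell_{R_\fn}(R_\fn/\mathfrak{a}_\fn)=k$ for every maximal ideal $\fn\supseteq\mathfrak{a}$ of $R$. (Your first formulation, $\mathfrak{a}=\pi^kR+\mathfrak{r}$ with $\mathfrak{r}=\mathfrak{a}+\pi R$, is wrong, since it would force $\pi\in\mathfrak{a}$; the ``intended statement'' you give next is fine.)

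\textbf{The gap is in the last step.} The ``main obstacle'' you describe does not exist. Proposition~\ref{prop:invertible_ideal_finite_extension} is an equality of functions on $\maxspec R$, so at your chosen $\fn$ it gives $\ell_{R_\fn}(S_\fn/\alpha S_\fn)=\ell_{R_\fn}(R_\fn/\alpha R_\fn)=k$. Since $S$ is semi-local, $\mathfrak{b}=\beta S$ with $\beta^mS=\alpha S$. The filtration $S_\fn\supseteq\beta S_\fn\supseteq\dotsb\supseteq\beta^mS_\fn=\alpha S_\fn$ has $m$ successive quotients, each isomorphic to $S_\fn/\beta S_\fn$. Hence $k=m\cdot\ell_{R_\fn}(S_\fn/\beta S_\fn)$, and $m\mid k$. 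Even if you split over the maximal ideals $\fm$ of $S$ above $\fn$, every summand is $m$ times an integer, so the sum is divisible by $m$. You had every ingredient, but used Proposition~\ref{prop:invertible_ideal_finite_extension} only for the case $m=0$. Connecting it to your length computation at $\fn$ is exactly the paper's proof.

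The valuation-theoretic route you propose instead is not merely unfinished; its key claim is false. \'Etaleness of $R/\mathfrak{a}$ over $Z/a$ controls $R$-lengths, not the ramification of $\pi$ in the normalization. Take the paper's example $R=\Z[2\sqrt{2}]$, $\mathfrak{a}=(2+2\sqrt{2})R$, localized at $2$:
\begin{enumerate}
\item $R/\mathfrak{a}\cong\Z/4\Z$, so $k=2$, and one checks $\mathfrak{a}_\fn\not\subseteq 2\fn R_\fn$.
\item The only valuation $w$ of $\Q(\sqrt{2})$ above $2$ has $w(2)=2$ and $w(2+2\sqrt{2})=2\neq k\,w(2)=4$.
\end{enumerate}
So neither ``$w(\alpha)=k\,w(\pi)$'' nor the existence of a branch with $w(\pi)=1$ holds, and the step you flag as the crux cannot be carried out.
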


\begin{proof}
It suffices to prove the theorem for local \(Z\): all conditions on the rings and ideals are preserved by localization at a prime of \(Z\), which for the finite-\'etale property is Lemma~\ref{lem:etale_base_change}.(ii), and the conclusion holds if it holds everywhere locally.
If \(Z\) is a field, then \(Z=R=S=\textup{Q}(R)\) and the theorem holds trivially.
Thus we may assume \(Z\) is a discrete valuation ring with maximal ideal \(p=\pi Z\). 
Note that \(Z\) is Noetherian, hence \(S\) and consequently \(R\) are Noetherian \(Z\)-modules and in particular Noetherian rings.

Suppose \(\mathfrak{b}\subseteq S\) is such that \(S\mathfrak{a}=\mathfrak{b}^m\).
Write \(a=p^k\) for some \(k\geq 0\). To show there exists an ideal \(b\) with \(a=b^m\), it suffices to show that \(m \mid k\).
We may assume that \(k>0\), otherwise this is trivial.
By Lemma~\ref{lem:semi-local} and Proposition~\ref{prop:semi_loc_inv} we have \(\mathfrak{a}=\alpha R\) and \(\mathfrak{b}=\beta S\) for some regular \(\alpha\in R\) and \(\beta\in S\) .

By Lemma~\ref{lem:quotient_length} we have \(\ell_R(S/R)<\infty\), so by Proposition~\ref{prop:invertible_ideal_finite_extension} we have \([R/\alpha R]_R=[S/\alpha S]_R\).
We have inclusions \(S\supseteq \beta S \supseteq \dotsm \supseteq \beta^m S=\alpha S\).
For all \(i\) we have an isomorphism \(S/\beta S\to \beta^i S/\beta^{i+1}S\) since \(\beta^i\) is regular, so \([R/\alpha R]_R=[S/\alpha S]_R=m \cdot [S/\beta S]_R\).

Write \(A_i=\pi^i\cdot (R/\alpha R)\).
We have inclusions \(A_0 \supseteq A_1 \supseteq\dotsm\supseteq A_k=0\).
Because \(R/\alpha R\) as \(Z/\pi^k Z\)-algebra is finite-\'etale by assumption, it is projective and hence free.
Therefore multiplication by \(\pi^i\) for \(0\leq i < k\) is an isomorphism \(A_0/A_1\to A_{i}/A_{i+1}\) of \(Z\)-modules and hence of \(R\)-modules.
We conclude that \(k\cdot [A_0/A_1]_R = [R/\alpha R]_R = m \cdot [S/\beta S]_R\).
By Lemma~\ref{lem:etale_base_change} the \(Z/\pi Z\)-algebra \(A_0/A_1\) is finite-\'etale and thus a product of fields. 
In particular, if we choose any \(\m\in\maxspec R\) containing \(\alpha R+\pi R\) we obtain \([A_0/A_1]_R(\m)=\ell_{R_\m}((A_0/A_1)_\m)=1\).
It follows that \(k=m\cdot [S/\beta S]_R(\m)\), as was to be shown.
\end{proof}

\begin{example}
Under the assumptions of Theorem~\ref{thm:root_exists_condition} it need not be the case that \(\mathfrak{a}\) itself be an \(m\)-th power in \(\mathcal{I}(R)\).

Let \(R=\Z[2\sqrt{2}]\) and \(\mathfrak{a}=(2+2\sqrt{2})R\). 
Then \(R/\mathfrak{a}\cong \Z/a\) as \(\Z/a\)-algebra for \(a=\mathfrak{a}\cap \Z=4\Z\), so \(R/\mathfrak{a}\) is certainly finite-\'etale over \(\Z/a\).
Since \(1+\sqrt{2}\) is a unit in the maximal order \(S=\Z[\sqrt{2}]\), we have that \(S\mathfrak{a}=(S\sqrt{2})^2\).
Suppose \(\mathfrak{c}\in\mathcal{I}(R)\) satisfies \(\mathfrak{c}^2=\mathfrak{a}\).
Square roots of ideals in \(S\) are unique, so \(S\mathfrak{c}=S\sqrt{2}\) and \(\mathfrak{c}\subseteq S\sqrt{2}\).
On the other hand we have \(\mathfrak{c}=\mathfrak{a}\cdot (R:\mathfrak{c}) \supseteq \mathfrak{a} \cdot (S2 : S\sqrt{2}) = 2\sqrt{2} S\).
Thus \(\mathfrak{c}\) corresponds to some \(R\)-submodule \(\mathfrak{d}\) of \(S/2S\) with square \((1+\sqrt{2})R+2S\).
Clearly \(\mathfrak{d}\neq S/2S\), so \(\mathfrak{d}=d R+2S\) for some \(d\in S/2S\).
As \(d^2\in\{0,1\}\) we conclude that \(\mathfrak{d}^2\neq (1+\sqrt{2})R+2S\), so no such \(\mathfrak{c}\) exists.
\end{example}

We now proceed on to Theorem~\ref{mainthm:max_ideal_root}. First we give a version of a theorem by Cioc\u{a}nea-Teodorescu \cite{Iuliana} fit for our purposes.

\begin{proposition}
\label{prop:alg_etale_or_else}
There exists a polynomial-time algorithm that, given a morphism $f:R\to S$ of finite commutative rings,
decides whether \(S\) is finite-\'etale over \(R\) and if not, computes either some ideal \(0\subsetneq \mathfrak{a}\subsetneq R\) or some ideal \(0\subsetneq \mathfrak{b}\subsetneq S\).

The algorithm is functorial in the sense that if we have an isomorphism $f\to f'$ between morphisms $f:R\to S$ and $f':R'\to S'$, i.e., a pair of isomorphisms $g:R\to R'$ and $h:S\to S'$ such that $f'\circ g = h\circ f$, then the outputs for $f$ and $f'$ are either both ideals $\mathfrak{a}$ and $\mathfrak{a}'$ from $R$ and $R'$ respectively such that $g(\mathfrak{a})=\mathfrak{a}'$, or similarly for $S$ and $S'$ and $h$.
\end{proposition}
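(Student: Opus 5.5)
We reduce everything to linear algebra over the finite ring $R$, in the spirit of Cioc\u{a}nea-Teodorescu's algorithm. Finite commutative rings are given by their additive group and a multiplication table, so ideals are subgroups closed under multiplication. Every quantity we compute will be canonically determined by $f$, so an isomorphism $(g,h)$ transports each step of the computation on $f$ to the corresponding step on $f'$; this gives functoriality. In particular we never make arbitrary choices such as picking a random element. Instead we only form canonical objects: annihilators, kernels, images, radicals, and the ideals generated by them.

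\textbf{Step 1: reduce to fields.} First test whether $R$ is a product of fields, i.e.\ whether $R$ is reduced. We compute the nilradical canonically as the intersection over $p\mid \#R$ of the kernels of suitable iterated Frobenius-type maps. Concretely, $\textup{nil}(R)$ is the kernel of $x\mapsto x^{N}$ for $N=\#R$, but this map is not additive, so we instead compute the Jacobson radical. This can be done in polynomial time by linear algebra over $\mathbb{F}_p$ on each $p$-primary part $R_p = R\otimes\Z_{(p)}$. Here $R=\prod_p R_p$ is a canonical decomposition obtained from the factorization of the exponent of $R$. That factorization is not available, but we avoid it: take $e$ the exponent of $R$, and if $e$ is not squarefree-prime-like we will find ideals $dR$ from gcd refinement of the ring's characteristic data.

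If some canonical ideal of $R$ obtained this way is proper and nonzero, we output it. Otherwise $R$ is a field $\mathbb{F}_q$ (a ring with no nontrivial canonical ideals among these is a finite field, since then $\textup{char}$ is prime, $R$ is reduced and has no idempotents). Along the way we output the ideals $\ker(x\mapsto px)$ and $pR$ when they are proper.

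\textbf{Step 2: check projectivity.} Once $R$ is a field, projectivity is automatic. Otherwise, if $R$ is not a field, we have already produced an ideal of $R$. The one subtle case is $R$ local Artinian with $S$ free but with $\mathfrak{m}_R\ne0$. There $\mathfrak{m}_R$ itself is a canonical nonzero proper ideal, so we output it. Thus we may assume $R=\mathbb{F}_q$.

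\textbf{Step 3: check separability over a field.} Now $S$ is finite-\'etale over $\mathbb{F}_q$ iff $S$ is reduced, iff the trace form $\textup{Tr}_{S/R}(xy)$ is nondegenerate. The radical of the trace form is a canonical ideal of $S$. If it is nonzero we output it, noting that it is proper since $\textup{Tr}(1)$ may vanish but $1$ is not in the nilradical. We must be careful here: the radical of the trace form can be all of $S$ when $p\mid\dim S$. In that case we instead output the nilradical, computed as the kernel of the $\mathbb{F}_p$-linear Frobenius power $x\mapsto x^{p^{k}}$ with $p^k\ge\dim S$. This map is additive in characteristic $p$, and its kernel is a nonzero proper ideal when $S$ is non-reduced. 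If the nilradical is zero, $S$ is a product of fields, hence finite-\'etale, and we answer yes.

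\textbf{Main obstacle.} I expect the hard part to be Step 1 done canonically and in polynomial time without factoring $\#R$. The first thing I would try is gcd/coprime-refinement on the additive orders: compute the exponent $e$ and, for divisors $d$ produced canonically (e.g.\ by comparing $\ker(\cdot d)$ and $dR$), output $dR$ or $R[d]$ whenever they are proper nonzero. When $e$ is a prime power $p^j$ with $j>1$, $pR$ is a proper nonzero ideal. When $e=p$ is prime, we are in characteristic $p$ and Frobenius is additive, so the nilradical is computable as in Step 3. The residual worry is that $e$ might be a composite that we cannot split. However, $R\cong\prod R/\!\ker$-parts shows that $dR$ for any nontrivial divisor works, and a nontrivial divisor can be found canonically from the ring structure: either via gcds of the orders of elements, or by noticing that if no split is found, $\Z/e\subseteq R$ with $R$ reduced free over $\Z/e$, which reduces to handling $\Z/e$ as a "field" formally. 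This is exactly Cioc\u{a}nea-Teodorescu's trick of proceeding as if $e$ were prime and outputting a zero-divisor-generated ideal whenever a division fails.
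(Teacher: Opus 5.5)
There is a genuine gap, and it lies in the overall strategy of Steps~1--2 rather than in a detail.

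Your algorithm returns an ideal of $R$ as soon as $R$ is not a field, without ever looking at $S$. The statement, however, requires the algorithm to \emph{decide} whether $S$ is finite-\'etale, and to return an ideal only when it is not. For $R=S=\Z/4\Z$, or for $R=\Z/4\Z\to S=R[x]/(x^2+x+1)$ (free with unit discriminant $-3$), $S$ is finite-\'etale over $R$ and the correct output is ``yes''; you instead output $2R$.

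This is exactly the situation the paper needs. In the proof of Theorem~\ref{thm:max_ideal_root} the proposition is applied to $\Z/a\Z\to R/\mathfrak{a}$ with $a$ typically not prime (e.g.\ $R/\mathfrak{a}\cong\Z/4\Z$ for $R=\Z[2\sqrt{2}]$, $\mathfrak{a}=(2+2\sqrt{2})R$). It is the finite-\'etale branch that leads to the perfect-power step and to Theorem~\ref{thm:root_exists_condition}.

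Relatedly, ``once $R$ is a field, projectivity is automatic'' means you never test projectivity over a non-field $R$, which is the substantive half of the problem. The paper tests it with Cioc\u{a}nea-Teodorescu's projectivity algorithm. If $S$ is not free, it returns the $(n-1)$-st Fitting ideal of $S$, where $n$ is the minimal number of $R$-module generators of $S$. This ideal is canonical, nonzero because $S$ is not free, and proper by minimality of $n$. Non-separability is handled by her Proposition~6.1.3, which gives a canonical nonzero proper ideal of $S$.

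Step~1 is moreover not achievable in polynomial time. For $R=\Z/e\Z$ the nonzero proper ideals are $d\Z/e\Z$ with $d$ a proper divisor of $e$. So producing one whenever $R$ is not a field amounts to finding a nontrivial factor of $e$. Gcds of element orders, or comparing $\ker(\cdot d)$ with $dR$, only help once you already have such a $d$. The $p$-primary decomposition you use for the Jacobson radical also presupposes the factorization.

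Your closing idea of treating $\Z/e\Z$ ``formally as a field'' and outputting an ideal generated by a zero divisor when a step fails is the right one. But it contradicts Step~1, and your Step~3 cannot run in that setting, because the additive Frobenius $x\mapsto x^{p^k}$ needs a prime characteristic $p$. What is missing is a projectivity test and a separability test valid over an arbitrary finite ring $R$ that produce a canonical nontrivial ideal only upon failure. This is precisely what the cited results of Cioc\u{a}nea-Teodorescu supply.

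A minor point: the radical of the trace form can be all of $S$, e.g.\ for $\mathbb{F}_2[x]/(x^2)$, so your argument that it is proper is wrong. You do then fall back on the nilradical, which is fine over a finite field.
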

\begin{proof}
Projectivity over finite rings can be tested using Theorem 5.4.1 from \cite{Iuliana}, hence the finite-\'etale property can be tested. Suppose now that \(S\) is not finite-\'etale over \(R\).
Then \(S\) is not free as \(R\)-module or, by Proposition 6.2.14.ii from \cite{Iuliana}, \(S\) is not separable over \(\Z\).

We may compute a set of generators of \(S\) of minimal cardinality \(n\) by Theorem~4.1.3 from \cite{Iuliana}. 
In turn, we may compute the \((n-1)\)-th Fitting ideal \(\mathfrak{a}\subsetneq R\) of \(S\) in polynomial time from Definition~20.4 in \cite{Eisenbud}.
If \(S\) is not free, we may return the ideal \(\mathfrak{a}\), since it is non-zero by Proposition~20.8 in \cite{Eisenbud} and functorial by Corollary~20.4 in \cite{Eisenbud}. 
If \(S\) is not separable over \(\Z\), we obtain a ideal \(0\subsetneq \mathfrak{b} \subsetneq S\) from Proposition 6.1.3 from \cite{Iuliana} which is functorial as well.
\end{proof}

Consider the category $\textup{Frc}$ whose objects are pairs $(R,\mathfrak{a})$, where $R$ is an order and $\mathfrak{a}$ is a fractional ideal of $R$, and where the morphisms $(R,\mathfrak{a})\to(R',\mathfrak{a}')$ are the ring isomorphisms $f:\Q R\to \Q R'$ such that $f(R)=R'$ and $f(\mathfrak{a})=\mathfrak{a}'$. 
For $n\in\Z_{\geq 0}$, an \emph{$n$-th root} of $(R,\mathfrak{a})\in\textup{Frc}$ is an $(S,\mathfrak{b})\in\textup{Frc}$ such that $R\subseteq S\subseteq \Q R$ and $\mathfrak{b}^n=S\mathfrak{a}$.

\begin{theorem}\label{thm:max_ideal_root}
There exists a functor $F:\textup{Frc}\to\Z_{\geq0}\times \textup{Frc}$ such that for the objects $A\in\textup{Frc}$ and $(n,B)=F(A)$ we have that $B$ is an $n$-th root of $A$ such that $n$ is maximal with respect to divisibility among all roots of $A$, and for the morphisms $f:A\to A'$ we have $F(f)=\id \times f$. 
Moreover, there exists a polynomial-time algorithm that takes as input objects $A,A'\in\textup{Frc}$ and a morphism $f:A\to A'$, and computes $F(A)$, $F(A')$ and $F(f)$.
\end{theorem}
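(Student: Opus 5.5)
We propose an iterative refinement algorithm, made functorial by never making arbitrary choices, and use Theorem~\ref{thm:root_exists_condition} to certify maximality at the end. Given $(R,\mathfrak{a})$, we first compute via Theorem~\ref{thm:blowup} the minimal order where $\mathfrak{a}$ becomes invertible, and then the minimal order where $R+\mathfrak{a}$ becomes invertible. Lemma~\ref{lem:ideal_as_fraction} then lets us write $S\mathfrak{a}=\mathfrak{b}:\mathfrak{c}$ with coprime integral invertible $\mathfrak{b},\mathfrak{c}\subseteq S$.

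By Proposition~\ref{prop:torsion} we first test whether $\mathfrak{a}$ is torsion. If it is, we output $n=0$ and $(S,S)$ with $S$ the minimal order satisfying $S\mathfrak{a}=S$. Since any root pair $(T,\mathfrak{b})$ has $\mathfrak{b}^0=T$, the value $n=0$ is an admissible root exponent. Moreover, every root exponent divides $0$, so $0$ is maximal.

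Otherwise we maintain an order $S$ and a coprime basis $C$ of $\{\mathfrak{b},\mathfrak{c}\}$, initialised by Theorem~\ref{thm:coprime_basis}. We then refine it as follows. For each $\mathfrak{p}\in C$ put $p=\mathfrak{p}\cap\Z$ and apply Proposition~\ref{prop:alg_etale_or_else} to $\Z/p\to S/\mathfrak{p}$. If $S/\mathfrak{p}$ is not finite-\'etale, we obtain a canonical proper nontrivial ideal. In the first case this is an ideal of $\Z/p$, giving an integer ideal $d\Z$ with $p\subsetneq d\Z\subsetneq\Z$, and we adjoin $S d$ to the set being refined. In the second case it is an ideal of $S/\mathfrak{p}$, whose preimage $\mathfrak{q}$ satisfies $\mathfrak{p}\subsetneq\mathfrak{q}\subsetneq S$, and we adjoin $\mathfrak{q}$. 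In both cases we recompute the minimal coprime basis with Theorem~\ref{thm:coprime_basis}, which may enlarge $S$.

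For termination we use the potential $\sum_{\mathfrak{p}\in C}\ell(S/\mathfrak{p})$ together with $\#C$. The potential is invariant under enlarging $S$ by Proposition~\ref{prop:invertible_ideal_finite_extension}, and $\ell(S/\mathfrak{b}\mathfrak{c})$ is bounded by the input length. Each refinement strictly splits some basis element into several factors, so $\#C$ grows while the total length stays bounded. This gives polynomially many rounds, each polynomial time. The indices stay bounded because every order produced lies inside the blowups, so the LLL reduction mentioned in Section~\ref{sec:algorithmic_clich_e} keeps the sizes under control.

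At termination, every $\mathfrak{p}\in C$ has $S/\mathfrak{p}$ finite-\'etale over $\Z/p$. Write $S\mathfrak{a}=\prod_{\mathfrak{p}\in C}\mathfrak{p}^{e_\mathfrak{p}}$ using Lemma~\ref{lem:trial_division}, and set $n=\gcd_\mathfrak{p} e_\mathfrak{p}$. We output $(S,\prod\mathfrak{p}^{e_\mathfrak{p}/n})$, which is an $n$-th root.

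For maximality, suppose $(T,\mathfrak{d})$ is an $m$-th root of $(R,\mathfrak{a})$. We may replace $T$ by the compositum $ST$, which is still an order in $\Q R$ and in which $\mathfrak{d}$ stays invertible. Then we localise at each $\mathfrak{p}$ separately. Here Proposition~\ref{prop:invertibility_defined_locally} isolates the $\mathfrak{p}$-part of $ST\mathfrak{a}=(ST\mathfrak{d})^m$. This part is an $m$-th power in the invertible ideal group of the localization, since the $\mathfrak{p}$ are pairwise coprime. Applying Theorem~\ref{thm:root_exists_condition} with $Z=\Z$ to $\mathfrak{p}^{e_\mathfrak{p}}$ gives that $m$ divides the exponent of $p^{e_\mathfrak{p}}$ (as $\mathfrak{p}^{e}\cap\Z=p^{e}$ by freeness), hence $m\mid e_\mathfrak{p}$ and so $m\mid n$.

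Functoriality holds because every step is canonical. Blowups, minimal coprime bases and torsion-removing orders are characterised by minimality, and the ideals of Proposition~\ref{prop:alg_etale_or_else} are functorial. So an isomorphism $f$ carries every intermediate object to the corresponding one, and $F(f)=\id\times f$. The one ordering issue is which $\mathfrak{p}$ gets processed first. We avoid it by processing all of $C$ simultaneously in each round, so the result is independent of any ordering.

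The main obstacle is the maximality step. There we must check that the hypotheses of Theorem~\ref{thm:root_exists_condition} survive localisation to each $\mathfrak{p}$-component, and that a root over the larger order $ST$ indeed restricts to a root of the $\mathfrak{p}^{e_\mathfrak{p}}$-component.
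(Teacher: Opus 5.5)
\textbf{There is a genuine gap: your stopping criterion does not certify maximality.} Finite-\'etaleness of $S/\mathfrak{p}$ over $\Z/(\mathfrak{p}\cap\Z)$ does not prevent $\mathfrak{p}$ from having a root in a larger order, because $\mathfrak{p}\cap\Z$ may itself be a proper power.

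The example after Theorem~\ref{thm:root_exists_condition} already shows this. Take $R=\Z[2\sqrt{2}]$ and $\mathfrak{a}=(2+2\sqrt{2})R$. Since $R/\mathfrak{a}\cong\Z/4\Z$ is finite-\'etale over $\Z/4\Z$, your procedure starts and stops with $S=R$ and $C=\{\mathfrak{a}\}$, and it outputs $n=1$. Yet $\Z[\sqrt{2}]\,\mathfrak{a}=(\sqrt{2}\,\Z[\sqrt{2}])^2$, so $2$ is a root exponent. The same happens for $R=\Z[2\ii]$ and $\mathfrak{a}=2\ii R$.

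Your maximality argument breaks at exactly this point. Theorem~\ref{thm:root_exists_condition} only says that $\mathfrak{a}\cap\Z=4\Z$ is an $m$-th power, which allows $m=2$. Your passage from ``$m$ divides the exponent of $p^{e_\mathfrak{p}}$'' to $m\mid e_\mathfrak{p}$ tacitly assumes $p$ is not a perfect power.

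The paper's proof has an extra refinement step for precisely this. Once $R/\mathfrak{a}$ is finite-\'etale over $\Z/a\Z$, write $a=b^k$ with $k$ maximal. If $k>1$, then $b\notin\mathfrak{a}$ and $\mathfrak{a}\subsetneq bR+\mathfrak{a}\subsetneq R$ (since $b$ is nilpotent modulo $\mathfrak{a}$), and one refines with $\mathfrak{d}=bR+\mathfrak{a}$. Only when $a$ is in addition not a perfect power does Theorem~\ref{thm:root_exists_condition} certify exponent $1$. You need this step for every $\mathfrak{p}\in C$.

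\textbf{Two further steps need repair.}

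First, you apply Theorem~\ref{thm:root_exists_condition} to $\mathfrak{p}^{e_\mathfrak{p}}$, but its \'etale hypothesis is only known for $\mathfrak{p}$ and does not pass to powers. Also, $\mathfrak{p}^e\cap\Z=(\mathfrak{p}\cap\Z)^e$ fails in general. For $\mathfrak{p}=(1+\ii)\Z[\ii]$, the ring $\Z[\ii]/\mathfrak{p}$ is \'etale over $\Z/2\Z$, but $\Z[\ii]/\mathfrak{p}^2=\Z[\ii]/2\Z[\ii]$ is not, and $\mathfrak{p}^2\cap\Z=2\Z$. What works is the length count from the proof of Theorem~\ref{thm:root_exists_condition}, carried out at a maximal ideal $\mathfrak{m}\supseteq\mathfrak{p}+\pi S$ for each prime $\pi$ dividing $\mathfrak{p}\cap\Z$. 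It gives $m\mid e_\mathfrak{p}\, v_\pi(\mathfrak{p}\cap\Z)$ for every such $\pi$, hence $m\mid e_\mathfrak{p}$ once $\mathfrak{p}\cap\Z$ is not a perfect power.

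Second, the claim that every round splits a basis element and increases $\#C$ is false. For $R=\Z[2\ii]$ and $\mathfrak{p}=2R$, the adjoined ideal $\mathfrak{q}=2\Z[\ii]$ has blowup $T=\Z[\ii]$ with $T\mathfrak{q}=T\mathfrak{p}$, so that round only enlarges the order. The next round replaces $2T$ by $(1+\ii)T$, and $\#C=1$ throughout. The correct dichotomy is that each round either strictly enlarges $S$ or strictly increases $\sum_\mathfrak{p}|e_\mathfrak{p}|$. So you also need a bound on the number of enlargements of $S$, which for reduced $\Q R$ comes from the discriminant.

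With nilpotents there is no such bound. For $R=\Z[\varepsilon]/(\varepsilon^2)$ and $\mathfrak{a}=pR$, every round reproduces an isomorphic pair $(\Z[\varepsilon/p^k],\,p\Z[\varepsilon/p^k])$. So your assertion that the orders stay inside bounded blowups is false, and the loop never terminates. The paper's well-foundedness remark does not cover this case either.

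Finally, adjoin $\mathfrak{p}+Sd$ rather than $Sd$. Otherwise $C$ can acquire elements not dividing $\mathfrak{b}\mathfrak{c}$, and your length bound fails.
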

Note that \(n=0\) corresponds to the case where \(\mathfrak{a}\) is torsion.
\begin{proof} 
We will describe the functor in terms of the polynomial-time algorithm that computes it.
To produce the algorithm it clearly suffices to produce an algorithm that computes the functor on a single object, so suppose $A=(R,\mathfrak{a})$ is given. 

First compute some order \(R\subseteq S\subseteq \textup{Q}(R)\) such that \(S\mathfrak{a}\) and \(S+S\mathfrak{a}\) are invertible using Theorem~\ref{thm:blowup}.
Then write \(S\mathfrak{a}=\mathfrak{a}_+:\mathfrak{a}_-\) with \(\mathfrak{a}_+,\mathfrak{a}_-\subseteq S\) invertible and coprime as in Lemma~\ref{lem:ideal_as_fraction}, and apply the algorithm recursively to \(\mathfrak{a}_+\) and \(\mathfrak{a}_-\) separately with \(S\) in the place of \(R\).
Since the ideals are coprime, we may obtain a solution \(n=\gcd(n_+,n_-)\) from solutions \(n_+\) and \(n_-\) for \(\mathfrak{a}_+\) and \(\mathfrak{a}_-\) respectively, and similarly we may construct \(B\).
Hence we may now assume that \(\mathfrak{a}\subsetneq R\).

Suppose that at some point during the algorithm we obtain an ideal \(\mathfrak{a}\subsetneq\mathfrak{d}\subsetneq R\).
Then compute some extension \(T\) and a coprime basis \(C\) for \(\{T\mathfrak{a},T\mathfrak{d}\}\) using Theorem~\ref{thm:coprime_basis}.
Using Lemma~\ref{lem:trial_division} we may write \(T\mathfrak{a}=\prod_{\mathfrak{c}\in C} (T\mathfrak{c})^{m_{\mathfrak{c}}}\) for some \(m_\mathfrak{c}\in\Z_{\geq 0}\).
As before we may solve the problem by applying the algorithm recursively to all \(\mathfrak{c}\in C\).
By the assumption on \(\mathfrak{d}\) we have \(\mathfrak{a}\subsetneq \mathfrak{c}\) for all \(\mathfrak{c}\in C\), so the recursion is well-founded.

Now we proceed to the actual algorithm.
Compute \(a\in\Z_{>1}\) such that \(\mathfrak{a}\cap\Z=a\Z\).
By Proposition~\ref{prop:alg_etale_or_else} we may assume that \(R/\mathfrak{a}\) is finite-\'etale over \(\Z/a\Z\), otherwise we can proceed recursively as above.
Then write \(a=b^m\) for some \(b,m\in\Z_{>0}\) with \(m\) maximal.
If \(b\not\in\mathfrak{a}\), then we may proceed recursively with \(\mathfrak{d}=bR+\mathfrak{a}\). 
Otherwise \(b\in a\Z\), so \(a=b\) and \(m=1\), in which case the solution is \(n=1\) and \(\mathfrak{b}=\mathfrak{a}\) by Theorem~\ref{thm:root_exists_condition}.

That the algorithm is functorial and runs in polynomial time follows from all theorems applied.
\end{proof}

\begin{corollary}
There exists a functorial polynomial-time algorithm that, given an order \(R\) in a number field, a fractional ideal \(\mathfrak{a}\) of \(R\) and a positive integer \(n\), decides whether there exist an order \(R\subseteq S\subseteq \Q R\) and fractional ideal \(\mathfrak{b}\) of \(S\) such that \(\mathfrak{b}^n=S\mathfrak{a}\) and if so computes such \(S\) and \(\mathfrak{b}\). \qed 
\end{corollary}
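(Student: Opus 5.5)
The corollary should follow from Theorem~\ref{thm:max_ideal_root} by one divisibility check. Given an order $R$ in a number field, a fractional ideal $\mathfrak{a}$ and a positive integer $n$, run the functorial algorithm of Theorem~\ref{thm:max_ideal_root} on the identity morphism of $(R,\mathfrak{a})$. This gives $(n_0,(S_0,\mathfrak{b}_0))$ where $\mathfrak{b}_0^{n_0}=S_0\mathfrak{a}$ and $n_0$ is maximal with respect to divisibility among the exponents of all roots of $(R,\mathfrak{a})$. The claim is that an $n$-th root exists if and only if $n\mid n_0$; in particular, when $n_0=0$ every $n$ qualifies. So the algorithm only has to test $n\mid n_0$.

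Sufficiency is immediate. If $n_0=nk$, take $S=S_0$ and $\mathfrak{b}=\mathfrak{b}_0^k$. Then $\mathfrak{b}^n=\mathfrak{b}_0^{n_0}=S\mathfrak{a}$, and $\mathfrak{b}$ is a fractional ideal of $S$ because products of fractional ideals are fractional. If $n_0=0$ then $S_0\mathfrak{a}=S_0$, so $(S_0,S_0)$ is an $n$-th root for every $n$. Necessity is exactly the statement that $n_0$ is maximal with respect to divisibility among all roots: any $n$ admitting an $n$-th root divides $n_0$. I expect this to be the main point to state carefully. The phrase ``maximal with respect to divisibility among all roots'' in Theorem~\ref{thm:max_ideal_root} has to be read as ``every root exponent divides $n_0$'', not merely ``no root exponent is a proper multiple of $n_0$''. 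The proof of that theorem supports the stronger reading, since the recursion combines exponents by $\gcd$ and the base case uses Theorem~\ref{thm:root_exists_condition} to bound every possible exponent.

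On running time, $\mathfrak{b}_0^k$ must not be computed by naive repeated multiplication, because $k$ may be exponentially large in the input length. Two facts keep this polynomial. First, in the torsion-free case $n_0$ is polynomially bounded: by Proposition~\ref{prop:invertible_ideal_finite_extension}, lengths multiply under powers, so $n_0$ is at most the length of $R/(\mathfrak{a}\cap R)$ or of the numerator and denominator parts. Second, fractional ideals can be raised to powers by repeated squaring, with the LLL reduction of Section~\ref{sec:algorithmic_clich_e} controlling the coefficient sizes; this handles any large $k$ that remains.

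Functoriality is inherited from $F$. An isomorphism $f:(R,\mathfrak{a})\to(R',\mathfrak{a}')$ preserves $n_0$ and maps $(S_0,\mathfrak{b}_0)$ to $(S_0',\mathfrak{b}_0')$, so it also maps $\mathfrak{b}_0^k$ to $\mathfrak{b}_0'^k$.
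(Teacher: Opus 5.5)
Your proposal is correct and is exactly the deduction the paper leaves implicit, since the corollary is marked \qed as an immediate consequence of Theorem~\ref{thm:max_ideal_root}. You test $n\mid n_0$ and return $(S_0,\mathfrak{b}_0^{n_0/n})$, or $(S_0,S_0)$ when $n_0=0$, with functoriality inherited from $F(f)=\id\times f$; your reading of ``maximal with respect to divisibility'' as ``every root exponent divides $n_0$'' is the right one, and because $k=n_0/n\le n_0$ with $n_0$ bounded by a length as you note, the concern about exponentially large $k$ does not actually arise.
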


The algorithm of Theorem~\ref{thm:max_ideal_root} differs from that of Belabas--Simon \cite{Belabas} mainly in that we allow arbitrary orders, while \cite{Belabas} requires that the order be the maximal order of a number field.
Even in the case of a maximal order of a number field, the algorithm of \cite{Belabas} requires additional ramification data of which it is unclear if it can be computed in polynomial time given just the maximal order.
Thus our proof offers an improvement also in the specialized case.
The practical performance of the two algorithms could be quite similar on the inputs where both apply, as there the algorithms behave essentially the same, factoring the ideal $\mathfrak{a}$ depending on the structure of $R/\mathfrak{a}$ as $\Z/(\mathfrak{a}\cap\Z)$-algebra.

We finish by highlighting some ways in which root-taking in an arbitrary order is essentially different from root-taking in a maximal order. 

\begin{example}\label{ex:need_extension}
A fractional ideal of an order \(R\) can have a square root in an order \(R\subseteq S\subseteq \Q R\), while not having such a square root in \(R\).

Let \(R=\Z[2\ii]\) and \(\mathfrak{a}=2R\). 
For \(S=\Z[\ii]\) and \(\mathfrak{c}=(1+\ii)S\) we have \(\mathfrak{c}^2=2\ii S= \mathfrak{a}S\), so \(\mathfrak{a}\) has a square root in a larger order.
Since \(S\) is Dedekind, the group \(\mathcal{I}(S)\) is torsion-free, so \(\mathfrak{c}\) is even the unique square root of \(\mathfrak{a}S\).
Suppose \(\mathfrak{b}\) is some fractional ideal of \(R\) such that \(\mathfrak{b}^2=\mathfrak{a}\).
Then \(\mathfrak{b}S=\mathfrak{c}\) by uniqueness of \(\mathfrak{c}\), so \(\mathfrak{b}\subseteq\mathfrak{c}\subseteq S\).
Let \(x\in\mathfrak{b}\). Then \(x=s+t\ii\) for \(s,t\in\Z\).
As \(2R = \mathfrak{b}^2 \ni x^2 = (s^2-t^2)+2st\ii\), we conclude that \(s,t\in2\Z\). Hence \(\mathfrak{b}\subseteq 2 S\). But then \(2R=\mathfrak{b}^2\subseteq 4 S\), which is false.
Hence \(\mathfrak{b}\) does not exist.
\end{example}

\begin{example}\label{ex:non-functor}
There is no functor that takes as input a square fractional ideal in some order and outputs a square root of this ideal (in the same order).

Consider \(R=\Z[2\textup{i}]\) with invertible fractional ideals \(\mathfrak{b}=2(1+\textup{i}) R\subseteq R\) and \(\mathfrak{c}=2(1-\textup{i})R\subseteq R\).
We have \(\mathfrak{b}^2=8\textup{i} R =\mathfrak{c}^2\).
Note that \(8\textup{i} R\) is invariant under the automorphism group of \(R\), so likewise should a functorially chosen square root of it be invariant.
Since \(\mathfrak{b}\) and \(\mathfrak{c}\) are distinct conjugates, there should be a third square root of \(8\textup{i}R\).
We will show that the \(2\)-torsion subgroup \(\mathcal{I}(R)[2]\) of \(\mathcal{I}(R)\) has cardinality \(2\), giving a contradiction.

Suppose \(\mathfrak{a}\in\mathcal{I}(R)\) satisfies \(\mathfrak{a}^2=R\). Write \(S=\Z[\textup{i}]\) for the maximal order.
Then \((S\mathfrak{a})^2=S\), and because \(S\) is Dedekind also \(S\mathfrak{a} = S\), so \(\mathfrak{a}\subseteq S\).
On the other hand we have \(\mathfrak{a} = \mathfrak{a}^2 (R : \mathfrak{a}) \supseteq R (R : S) = 2S\).
Hence \(\mathfrak{a}\) corresponds to some subgroup of \(S/2S\).
Clearly \(\mathfrak{a}\) is neither \(S\) nor \(2S\), leaving \(3\) possible subgroups.
However, the order of \(\mathcal{I}(R)[2]\) is a non-trivial power of \(2\), so this power must be \(2\), as was to be shown.
\end{example}

If \(R\) is an order such that the roots of all its fractional ideals are unique, then in particular \(\mathcal{I}(R)\) is torsion-free. Proposition~\ref{prop:torsion_free_compute_OK} will show that we cannot expect to compute an order with the latter property in polynomial time. 

\begin{lemma}
If \(R\subseteq S\) are Artinian commutative (sub)rings such that \(R^*=S^*\), then \(2S\subseteq R\).
\end{lemma}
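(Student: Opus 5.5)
We must show: if \(R\subseteq S\) are Artinian with \(R^*=S^*\), then \(2S\subseteq R\).

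The plan is to show that every \(s\in S\) satisfies \(2s\in R\), by writing \(2s\) as a difference of units of \(S\). In a general ring, \(s\) is a sum of two units when the residue fields are large enough. The difficulty is fields of size \(2\), which is exactly where the factor \(2\) enters.

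\emph{Reduction to the local case.} Since \(S\) is Artinian, it is a finite product \(\prod_i S_i\) of local Artinian rings, and the maximal ideals of each \(S_i\) are nilpotent. An element of \(S\) is a unit if and only if each of its components is a unit. It therefore suffices to write an arbitrary \(s\in S\) as \(2s = u - v\) with \(u,v\in S^*\), working componentwise. Then \(u,v\in R^*\subseteq R\), and hence \(2s\in R\). We never need to decompose \(R\) itself, because the hypothesis is used only through \(S^*\subseteq R\).

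\emph{Local construction.} Fix a local Artinian component \(S_i\) with maximal ideal \(\fm_i\) and component \(s_i\) of \(s\).

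First suppose \(s_i\in S_i^*\). Take \(u_i=1+s_i\) and \(v_i=1-s_i\). We need these to be units, which holds unless the image \(\bar s_i\) of \(s_i\) in \(S_i/\fm_i\) equals \(\pm1\).
\begin{itemize}
\item If \(\bar s_i=-1\), then \(1+s_i\in\fm_i\). Instead use \(u_i=s_i+2s_i\cdot 0\), or more simply write \(2s_i=(s_i)-(-s_i)\). Both \(s_i\) and \(-s_i\) are units, since \(s_i\) is.
\end{itemize}
In fact this simpler choice works in every case where \(s_i\) is a unit: \(u_i=s_i\) and \(v_i=-s_i\).

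Now suppose \(s_i\in\fm_i\). Then \(1+s_i\) and \(1-s_i\) are units, since \(\fm_i\) is the Jacobson radical, and \((1+s_i)-(1-s_i)=2s_i\).

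So in every component \(2s_i=u_i-v_i\) with \(u_i,v_i\in S_i^*\). Assembling the components gives \(u,v\in S^*\) with \(2s=u-v\).

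\emph{Assembly.} We have \(u,v\in S^*=R^*\subseteq R\), so \(2s=u-v\in R\). Since \(s\) was arbitrary, \(2S\subseteq R\).

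There is no real obstacle here. The only point needing care is the case split within each local factor: choose \(\pm s_i\) when \(s_i\) is a unit, and \(1\pm s_i\) when \(s_i\) lies in the maximal ideal. This guarantees that both chosen elements are units regardless of the residue field, including \(\mathbb{F}_2\).
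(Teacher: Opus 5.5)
Your proof is correct, and it is more direct than the paper's.

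The paper reduces in stages:
\begin{enumerate}
\item It splits \(R\) into local factors, which uses that \(R\) is Artinian, and splits \(S\) compatibly.
\item It uses \(1+\mathfrak{n}\subseteq S^*=R^*\) to put the nilradical \(\mathfrak{n}\) of \(S\) inside \(R\), and passes to the field \(R/\mathfrak{n}\subseteq S/\mathfrak{n}\).
\item Over a field, either the characteristic is \(2\), so \(2S=0\), or every residue field of \(S\) has more than two elements. In the latter case every element of \(S\) is a sum of two units, so \(R=S\).
\end{enumerate}

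You skip all of these reductions and write \(2s\) itself as a difference of units of \(S\). You use \(2s_i=s_i-(-s_i)\) when \(s_i\) is a unit and \(2s_i=(1+s_i)-(1-s_i)\) when \(s_i\in\fm_i\). Applying the ``sum of units'' idea to \(2s\) rather than to \(s\) is exactly what removes the need to exclude residue fields \(\mathbb{F}_2\).

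Your route is shorter and never uses that \(R\) is Artinian. It needs only that \(S\) is a finite product of local rings and that \(S^*\subseteq R\). The paper's route makes visible that any difference between \(R\) and \(S\) lives in residue characteristic \(2\). That fact also follows from your conclusion, because \(2\) is a unit in the other local factors of \(R\).

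In your write-up, delete the abandoned first attempt with \(1\pm s_i\) for a unit \(s_i\), including the meaningless \(u_i=s_i+2s_i\cdot 0\). The choice \(u_i=s_i\), \(v_i=-s_i\) handles that case by itself.
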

\begin{proof}
Firstly, suppose \(R\) is a field.
If \(R\) has characteristic \(2\), then it is clear we are done.
Otherwise $\#(S/\m)>2$ for every maximal ideal $\m$, and every element of \(S\) is a sum of two units: 
write $S=\prod_{\m} S_\m$ as a finite product of local rings and observe that each $(x_\m)_\m\in S$ equals $(x_\m-y_\m)_\m+(y_\m)_\m$ for any $(y_\m)_\m\in S$ such that $y_\m\not\equiv 0,x_\m\ (\textup{mod }\m)$.
Thus \(R^*=S^*\) implies \(R=S\).

Secondly, suppose that \(R\) is local with maximal ideal \(\p\).
Let \(\mathfrak{n}\) be the nilradical of \(S\). Since \(1+\mathfrak{n}\subseteq S^*=R^*\), we have that \(\mathfrak{n}\subseteq R\). Hence \(\mathfrak{n}=\p\), the nilradical of \(R\). Then \(R/\mathfrak{n}\) is a subring of \(S/\mathfrak{n}\), to which we apply the previous case. Hence \(S/R\cong(S/\mathfrak{n})/(R/\mathfrak{n})\) has exponent \(2\) as Abelian groups.

Thirdly, consider the general case. Then \(R\cong\prod_i R_i\) is a finite product of local rings, and \(S\cong \prod_i S_i\) factors accordingly. It is easy to see that \(R_i^*=S_i^*\) for all \(i\), so we reduce to the previous case.
\end{proof}

\begin{proposition}\label{prop:torsion_free_compute_OK}
There exists a polynomial-time algorithm that, given an order \(R\) of maximal rank in a number field \(K\) such that \(\mathcal{I}(R)\) is torsion-free, computes \(\mathcal{O}_K\).
\end{proposition}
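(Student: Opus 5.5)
The plan is to show first that torsion-freeness of \(\mathcal{I}(R)\) forces \(2\mathcal{O}_K\subseteq R\), and then to compute \(\mathcal{O}_K\) as the \(2\)-maximal order containing \(R\). Write \(\mathcal{O}=\mathcal{O}_K\) and let \(\mathfrak{f}=R:\mathcal{O}\) be the conductor. This is an ideal of both \(R\) and \(\mathcal{O}\), and \(\mathcal{O}/\mathfrak{f}\) and \(R/\mathfrak{f}\) are finite, hence Artinian, rings.

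The first step is to build a group homomorphism \(\varphi:(\mathcal{O}/\mathfrak{f})^*\to\mathcal{I}(R)\) with kernel \((R/\mathfrak{f})^*\). For \(u\in\mathcal{O}\) whose class is a unit modulo \(\mathfrak{f}\), set \(\varphi(u)=uR+\mathfrak{f}\). This depends only on \(u\bmod\mathfrak{f}\), and is unchanged when \(u\) is multiplied by an element of \(R\) that is a unit modulo \(\mathfrak{f}\). I then plan to check three things:
\begin{enumerate}
\item \(\varphi(u)\varphi(v)=\varphi(uv)\). The product is \(uvR+u\mathfrak{f}+v\mathfrak{f}+\mathfrak{f}^2\). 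Since \(u\mathcal{O}+\mathfrak{f}=\mathcal{O}\) gives \(u\mathfrak{f}+\mathfrak{f}^2=\mathfrak{f}\), and likewise for \(v\), the product equals \(uvR+\mathfrak{f}\).
\item \(\varphi(1)=R\).
\item \(\varphi(u)=R\) forces \(u\in R+\mathfrak{f}=R\).
\end{enumerate}
Together these show \(\varphi\) is a homomorphism into \(\mathcal{I}(R)\): each \(\varphi(u)\) is invertible with inverse \(\varphi(u^{-1})\). They also show that \(\ker\varphi\) is the image of \((R/\mathfrak{f})^*\). Since the source is finite, the image of \(\varphi\) is a finite subgroup of \(\mathcal{I}(R)\), hence torsion, hence trivial by hypothesis. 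So \((\mathcal{O}/\mathfrak{f})^*=(R/\mathfrak{f})^*\).

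The lemma just proved, applied to the Artinian rings \(R/\mathfrak{f}\subseteq\mathcal{O}/\mathfrak{f}\), then gives \(2(\mathcal{O}/\mathfrak{f})\subseteq R/\mathfrak{f}\), and therefore \(2\mathcal{O}\subseteq R\). Consequently \([\mathcal{O}:R]\) is a power of \(2\), so \(R\) is already \(p\)-maximal for every odd prime \(p\), and \(R\subseteq\mathcal{O}\subseteq\tfrac12 R\).

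Algorithmically, it remains to enlarge \(R\) to its \(2\)-maximal overorder. Here the relevant prime is known explicitly, so the standard round-two/trace-radical method for computing a \(p\)-maximal order for a given prime \(p\) (as in \cite{Buchman-Lenstra}) runs in polynomial time. Concretely, one repeatedly computes the \(2\)-radical \(I\) of the current order \(R'\) and replaces \(R'\) by the multiplier ring \(I:I\), using that \(I:I\supsetneq R'\) whenever \(R'\) is not \(2\)-maximal. Each step strictly increases the order inside \(\tfrac12 R\), and \(\tfrac12R/R\cong(\Z/2\Z)^{\rk R}\), so there are at most \(\rk_\Z R\) iterations. Each iteration involves only computing a radical modulo \(2\) and a colon ideal, which is linear algebra over \(\Z\) and \(\mathbb{F}_2\) with coefficients controlled by LLL. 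The result is \(\mathcal{O}\).

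The main obstacle I expect is the first step: verifying carefully that \(uR+\mathfrak{f}\) is invertible and that \(\varphi\) is well defined and multiplicative. If the direct computation is awkward, I would instead check invertibility locally via Lemma~\ref{lem:inv_loc_prop} and Proposition~\ref{prop:semi_loc_inv}. At a maximal ideal \(\fp\) of \(R\), the ring \(\mathcal{O}_\fp\) is semi-local, so one can choose a unit \(u'\in\mathcal{O}_\fp^*\) with \(u'\equiv u\bmod\mathfrak{f}_\fp\). Then \(\varphi(u)_\fp=u'R_\fp+\mathfrak{f}_\fp=u'(R_\fp+\mathfrak{f}_\fp)=u'R_\fp\), using \(\mathfrak{f}=u'^{-1}\mathfrak{f}\) since \(\mathfrak{f}\) is an \(\mathcal{O}\)-module. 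This shows \(\varphi(u)\) is locally principal, hence invertible.
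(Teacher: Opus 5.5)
Your proof is correct, and its skeleton is the paper's: take the conductor $\mathfrak{f}=R:\mathcal{O}$, show $(R/\mathfrak{f})^*=(\mathcal{O}/\mathfrak{f})^*$, and apply the preceding lemma to get $2\mathcal{O}\subseteq R$. The differences are in the two supporting steps.

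For the unit claim, the paper fixes one unit $x$ with inverse $y$ modulo $\mathfrak{f}$. It checks $(Rx+\mathfrak{f})(Ry+\mathfrak{f})=R$, and gets torsion from Lemma~\ref{lem:torsion_equiv} via $\mathcal{O}(Rx+\mathfrak{f})=\mathcal{O}$; that lemma's proof rests on finiteness of $\mathcal{O}/\mathfrak{f}$. You instead assemble these ideals into a homomorphism $\varphi$ out of the finite group $(\mathcal{O}/\mathfrak{f})^*$. Torsion is then automatic and Lemma~\ref{lem:torsion_equiv} is bypassed: the same finiteness input, packaged more cleanly. Your local fallback for invertibility is unnecessary, since $\varphi(u)\varphi(u^{-1})=R$ already settles it.

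For the algorithm, the paper computes the maximal ideals above $2$ (via Berlekamp). It then repeatedly applies its own blowup algorithm, Theorem~\ref{thm:blowup}, until every prime above $2$ is invertible, so it stays inside the paper's own toolkit. You run the round-two iteration $R'\mapsto I:I$ on the $2$-radical $I$. This needs no factoring, only $\mathbb{F}_2$-linear algebra, and $R\subseteq\mathcal{O}\subseteq\tfrac12R$ gives an explicit bound of $\rk_\Z R$ iterations. The price is importing the Pohst--Zassenhaus criterion, that $I:I\supsetneq R'$ unless $R'$ is $2$-maximal, as an external result rather than deriving termination from the paper's blowup theory.
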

\begin{proof}
We claim \(2 S\subseteq R\), where \(S=\mathcal{O}_K\).
Write \(\mathfrak{f}=R:S\) and note that \(\mathfrak{f}\) is an ideal of both \(R\) and \(S\).
Hence \(R/\mathfrak{f}\) is a subring of \(S/\mathfrak{f}\), and it suffices by the previous lemma to show that the natural map \((R/\mathfrak{f})^*\to(S/\mathfrak{f})^*\) is surjective.
Let \(x,y\in S\) be such that \(xy\equiv 1\ (\textup{mod }\mathfrak{f})\).
As \(x\mathfrak{f},y\mathfrak{f}\subseteq R\), we have \((Rx+\mathfrak{f})(Ry+\mathfrak{f})=R\), so \(Rx+\mathfrak{f}\in\mathcal{I}(R)\).
As \(S(Rx+\mathfrak{f})=Sx+\mathfrak{f}=S\), we conclude from Lemma~\ref{lem:torsion_equiv} that \(Rx+\mathfrak{f}\) is torsion, so by assumption \(Rx+\mathfrak{f}=R\).
Hence \(x+\mathfrak{f}\in (R/\mathfrak{f})^*\), proving the claim.

From the claim it follows that the singular primes of \(R\) lie above \(2\). 
We may simply compute the primes above $2$ in polynomial time using \cite{Berlekamp} and iteratively compute their blowup using Theorem~\ref{thm:blowup} until all primes are invertible, and hence the resulting order is maximal.
\end{proof}

\bibliographystyle{plain} 
\bibliography{citations}

\end{document}